\newtheorem{theorem}{Theorem}[section]
\newtheorem{lemma}[theorem]{Lemma}
\newtheorem{assumption}[theorem]{Assumption}
\newtheorem*{lemma*}{Lemma}
\theoremstyle{remark}
\newtheorem{definition}[theorem]{Definition}
\newtheorem{remark}[theorem]{Remark}
\newcommand{\abs}[1]{\left|#1\right|} 
\newcommand{\cB}{\mathcal{B}} 
\newcommand{\cC}{\mathcal{C}} 
\newcommand{\bC}{\mathbb{C}} 
\newcommand{\cD}{\mathcal{D}} 
\newcommand{\cS}{\mathcal{S}} 
\newcommand{\cJ}{\mathcal{J}} 
\newcommand{\cL}{\mathcal{L}} 
\newcommand{\bN}{\mathbb{N}} 
\newcommand{\bR}{\mathbb{R}} 
\newcommand{\bT}{\mathbb{T}} 
\newcommand{\bZ}{\mathbb{Z}} 
\newcommand{\norm}[1]{\left\|#1\right\|} 
\newcommand{\snorm}[1]{\left|#1\right|} 
\newcommand{\littleo}[1]{\operatorname{\mathnormal{o}}(#1)} 
\newcommand{\bH}{\mathbb{H}} 
\newcommand{\dv}{V} 
\newcommand{\dw}{W} 
\newcommand{\dz}{Z} 
\newcommand{\spec}[1]{\operatorname{Spec}(#1)} 
\newcommand{\lquotient}[2]{\raisebox{-.2em}{\(#1\)} \diagdown #2} 
\begin{document}

\title{Anisotropic spaces and nil-automorphisms}

\author{Oliver Butterley}
\address{(Oliver Butterley) Department of Mathematics, University of Rome Tor Vergata, Via della Ricerca Scientifica, 00133 Roma, Italy}
\email{butterley@mat.uniroma2.it}
\thanks{$^\ast$Corresponding author.}

\author{Minsung Kim$^{\ast}$}
\address{(Minsung Kim) Department of Mathematics, Pohang University of Science and Technology (POSTECH), 77 Cheongam-Ro, Nam-Gu, Pohang, Gyeongbuk, 37673 Korea}
\email{minsung@postech.ac.kr}

\begin{abstract}
    We introduce a family of geometric anisotropic Banach spaces on Heisenberg nilmanifolds and study the spectrum of the composition operator associated to partially hyperbolic automorphisms.
    Choosing amongst the family of Banach spaces, it is possible to make the essential spectral radius arbitrarily small.
    We show that the exterior part of the discrete spectrum coincides with the spectrum restricted to the kernel of one of the operators associated to the nil-automorphism.
    Moreover we show that the remainder of the discrete spectrum is self-similar, it is given by scaled copies of the exterior part.
\end{abstract}

\maketitle
\thispagestyle{empty}

\section{Introduction and results}
\label{sec:results}

Transfer operators are used widely in the study of dynamical systems. 
The combination of functional analytic techniques and dynamical systems theory gives powerful methods to construct invariant measures as well as a means to study decay of correlation and other statistical properties. 
This idea goes back, at least, to the Koopman operator, and its use by von Neumann to prove the mean ergodic theorem.
Some milestones of the development include:
Exploring the spectral theory for the Koopman operator and its relation to the statistical properties of the system (ergodicity, mixing, etc.)~\cite{CFS82};
Studying the adjoint of the Koopman operator, sometimes called the Perron-Frobenius or the Ruelle-Perron-Frobenius transfer operator;
Avoiding the need for coding the system~\cite{Bowen75} but instead directly study the transfer operator acting on density functions (e.g.,~\cite{LY73, Ruelle89}), not only for expanding systems but also for the more involved hyperbolic case.
The theory is now rather well developed (see, e.g., \cite{Keller98, Baladi00, Baladi16}) and has resulted in multiple significant breakthroughs in the understanding of diverse dynamical systems. 
Despite this advanced state of development, many details remain to be fully understood.

The work of Blank, Keller \& Liverani~\cite{BKL02}, showed that it was possible to directly study the transfer operators associated to hyperbolic systems by taking advantage of anisotropic Banach spaces.
In the years since then, many of these anisotropic Banach spaces have been constructed, capturing the behaviour of rather general hyperbolic systems (e.g., \cite{GL06, GL08, BT07, BT08, FRS08}, and \cite{Baladi16, Baladi17} for an overview).

Understanding the statistical properties of partially hyperbolic systems (i.e., with a neutral direction) is hard.
(See, e.g., \cite{Dolgopyat98,BL07,BL13,Butterley16,ABV16,BE17,BW20,CL22,TZ23}.)
It appears essential to perform some form of oscillatory cancellation argument at some stage of the process in order to deal with the direction that sees neither expansion nor contraction.
There has been substantial progress on this topic in recent years but there remain open questions and it is desirable to further develop the technology for studying such systems.

Until recently attention was mostly directed on: (1) the peripheral spectrum since this encodes quantitative information on ergodicity and mixing and yields the rate of mixing (e.g, \cite{Liverani95,Baladi00}); (2) estimating the essential spectral radius and connecting this to the meromorphic domain of the zeta-function \cite{Baladi16}. 
For systems with a neutral direction, either flows or partially hyperbolic systems, the process is more involved but there is still the possibility of obtaining similar spectral information (e.g., \cite{Faure07,FT13,FT15,GHW16,FT17,GF18}).
In recent years it has become clear that it is both possible and desirable to go beyond this and to obtain a detailed understanding of the point spectrum~\cite{FT17, DZ16, DZ17, GF18, GL19, FGL19, BS20, KW20, Baladi22, BKL22, GGHW24, BCJ22, BCC24}.
In the case of pseudo-Anosov maps, the work of Faure, Gouëzel \& Lanneau showed that the point spectrum can be identified using a connection to the action of the dynamics on cohomology~\cite{FGL19}.
In other cases it was possible to obtain results related to bands of spectrum for transfer operators associated to systems with a neutral direction~\cite{Faure07,FT13,FT15,FT17, BS20} and closely related is the semiclassical analysis for contact Anosov flows~\cite{FT13,GHW16}.
In a slightly different direction, other works investigated the possibility of an explicit description of the spectrum for analytic expanding or hyperbolic maps (Blaschke products)~\cite{SBJ13, BJS17, SBJ17}.
In yet another direction, perturbation and genericity results were obtained~\cite{KR04, Naud12, Adam17, BN19}.

Heisenberg nilflows are key examples of parabolic\footnote{Parabolic in the sense that the distance between two close flow trajectories grows polynomially with time. Parabolic behavior is often characterized by zero entropy, lying strictly between hyperbolic and elliptic.} dynamics.
As such they are also key examples for the use of renormalization techniques.
The flows on general (higher-step) nilmanifolds are not renormalizable due to the complexity of commutation relations on Lie algebras. 
Nevertheless, new methods were introduced on certain types (so called Quasi-abelian or Triangular type) to move beyond renormalization methods \cite{Forni16,FF23,Kim22b}.
The renormalization scheme for Heisenberg nilflows was studied by Flaminio \& Forni~\cite{FF06} and they proved results on the deviation of ergodic averages (for higher rank actions see \cite{CF15, Kim22a}).  
In our case, the automorphism we study corresponds to a periodic type of renormalization cocycle (see \eqref{eq:renorm}).
As such, once the spectrum of the transfer operator has been determined, it can be used to prove the deviation of ergodic averages by adopting the ideas of Giulietti \& Liverani~\cite{GL19} (see also \cite{BS20,AB22,Galli24}). 
A possible benefit of this recent push of the techniques would be to allow results previously only available for the algebraic systems to be extended to general systems.

This work fits amongst these above mentioned topics.
We continue to explore the possibilities and refine the technology, in particular we study a class of specific partially hyperbolic systems (nil-automorphisms on Heisenberg manifolds), develop a family of anisotropic Banach spaces amenable to the present setting and obtain complete spectral data for the associated transfer operator. 

Let \(\bH\) be the three-dimensional Heisenberg group.
Up to isomorphism, \(\bH\) is the group of upper triangular matrices
\[
    \left\{
    \left(
    \begin{smallmatrix}
            1 & x & z \\
            0 & 1 & y \\
            0 & 0 & 1
        \end{smallmatrix}
    \right)
    : x,y,z \in \bR
    \right\}
\]
with the group law being usual matrix multiplication.
Equivalently, \(\bH\) is equal to \(\bR^3\) with the group law
\[
    (x,y,z) * (x',y',z') = (x+x', y+y', z+z' + xy').
\]
The corresponding Lie algebra has a basis \(\{\dv,\dw,\dz\}\), which satisfies the following commutation relations:
\begin{equation}
    \label{eq:comm}
    [\dv,\dw]=\dz, \quad [\dv,\dz]=[\dw,\dz]=0.
\end{equation}
(For further details see \cite[Chapter 1]{Folland89} where the version presented here is called the \emph{polarised  Heisenberg group}.)

A Heisenberg nilmanifold is the compact quotient space \(M = \lquotient{\Gamma}{\bH}\) where \(\Gamma\) is a discrete subgroup of \(\bH\).
Up to an automorphism of \(\bH\), every such subgroup \(\Gamma\) is of the form 
\begin{equation}
    \label{eq:gamma-K}
    \Gamma_K = \left\{(x, y, z / K) : x,y,z \in \bZ \right\} \subset \bH
\end{equation} 
for some\footnote{We use the convention that \(\bN = \{1,2,3,\ldots\}\) and \(\bN_0 = \{0,1,2,\ldots\}\).} \(K \in \bN\)~\cite[Theorem 1.10]{Tolimieri78}.

Note that the \(\dz\)-fibres in the quotient space \(M\) are circles. 
The object of our interest is \(\Phi : M \to M\), an automorphism such that, for some \(\lambda > 1\),
\begin{equation}
    \label{eq:part-hyp}
    \Phi_* \dv = \lambda^{-1} \dv, \quad
    \Phi_* \dw = \lambda \dw, \quad
    \Phi_* \dz = \dz.
\end{equation}
We say that such an automorphism is \emph{partially hyperbolic with neutral centre} since it exhibits contraction, expansion and neutral behaviour.
In Appendix~\ref{sec:examples} we give details of this connection with toral automorphisms.
Note that, as explained there, one could reasonably take the point of view that the automorphism comes first and \(\{\dv,\dw,\dz\}\) and \(\lambda\) are defined by the automorphism.
Let \(E_\dv\), \(E_\dw\), \(E_\dz\) denote the one dimensional sub-bundles of tangent space which correspond to \(\dv\), \(\dw\), \(\dz\) respectively.
Consequently the tangent bundle admits a splitting \(TM = E_\dv \oplus E_\dw \oplus E_\dz\) such that \(\Phi\) is
\begin{itemize}
    \item uniformly contracting on \(E_\dv\);
    \item uniformly expanding on \(E_\dw\);
    \item an orientation-preserving isometry on \(E_\dz\).
\end{itemize}
Such automorphisms exist, since \(M\) is a circle bundle over the torus, these partially hyperbolic automorphisms can be constructed as circle extensions of hyperbolic toral automorphisms.
(See also \cite{RRU08,Hammerlindl13} for general information concerning partially hyperbolic automorphisms.)

Let \(\nu\) denote the probability measure on $M$ which is inherited from the Haar measure on \(\bH\).
Observe that \(\nu\) is a \(\Phi\)-invariant measure.
Since \(\Phi\) is an isometry in the \(\dz\) direction it is convenient to define, for all \(N\in \bZ\) and \(r\in \bN_0\),
\begin{equation}
    \label{eq:def-CN}
    \cC_N^r = \{h \in \cC^r(M) : \dz h = 2\pi i NK h\}, \text{ and } \ \cC_N^\infty = \cap_{r \in \bN_0} \cC_N^r.
\end{equation}
This is Fourier decomposition in the \(\dz\) direction.
Observe that \(h\mapsto h\circ \Phi\) leaves \(\cC_N^\infty\) invariant.
Consequently, in this case, we can study the action of \(\Phi\) on \(\cC_N^\infty\) for each \(N\) rather than studying it directly on \(\cC^\infty(M)\). 
This helps greatly here but is often not possible for general partially hyperbolic systems because the centre direction can't be guaranteed to be of sufficiently good regularity.

As a central component of this work, in Section~\ref{sec:anisotropic} we define a family of Banach spaces, \(\{\cB_{N}^{p,q}\}_{p,q \in \bN}\).
These are geometric-type anisotropic spaces and, by construction, \(\cC_{N}^\infty\) is a dense subset of \(\cB_{N}^{p,q}\).
Let \(\cD_N^1\) denote the space of distributions on \(\cC^\infty\) which are supported on \(\cC^1_{-N}\).

We are interested in studying the operator \(\cL : \cC_{N}^\infty \to \cC_{N}^\infty\), defined as \(h \mapsto h \circ \Phi\).
As a first step we prove that, for each \(p\), \(q\), the operator extends to a bounded linear operator on \(\cB_{N}^{p,q}\) and the essential spectral radius is not greater than \(\lambda^{-\min{\{p,q\}}}\).
This and other estimates related to the norms and operator \(\cL\) are the content of Section~\ref{sec:norm-est}.
Then, in Section~\ref{sec:self-similar} we take advantage of the special nature of the current setting in order to show a self-similar structure of the point spectrum. 
The combination of these results (Lemma~\ref{lem:spectrum}, Lemma~\ref{lem:iterated-spec}) gives us the following.

\begin{theorem}
    \label{thm:spectrum}
    Let \(M\) be a Heisenberg nilmanifold with \(K\) associated to the lattice~\eqref{eq:gamma-K}.
    Let \(\Phi : M \to M\) be a partially hyperbolic automorphism~\eqref{eq:part-hyp} with \(\lambda > 1\).
    Let \(p \in \bN_0\), \(q \in \bN\), \(N \neq 0\).
    Then
    \begin{enumerate} 
        \item The operator \(\cL : \cC_{N}^\infty \to \cC_{N}^\infty\), extends to a bounded linear operator on \(\cB_{N}^{p,q}\);
        \item The operator\footnote{The extension of the operator is denoted by the same symbol.} \(\cL : \cB_{N}^{p,q} \to \cB_{N}^{p,q}\), \(N\neq 0\) is quasi-compact with essential spectral radius not greater than \(\lambda^{-\min{\{p,q\}}}\), moreover \(\spec{{\left.\cL\right|}_{\ker_N(\dv)}}\) consists of a finite set of eigenvalues.       
        \item The spectrum of \(\cL : \cB_{N}^{p,q} \to \cB_{N}^{p,q}\), \(N\neq 0\), restricted to \(\{z\in \bC: \lambda^{-\min{\{p,q\}}} < \abs{z}\}\) is equal to 
        \[
          \{z\in \bC : \exists k \in \bN_{0}, k < \min(p,q), \lambda^k z \in \spec{{\left.\cL\right|}_{\ker_N(\dv)}} \}.
        \]
    \end{enumerate}
\end{theorem}

In other words, we obtain a full description of the spectrum if we know the spectrum of \({\left.\cL\right|}_{\ker_N(\dv)}\). 

In Section~\ref{sec:anisotropic} we introduce the anisotropic norms and study their basic properties.
In Section~\ref{sec:norm-est} we estimate the essential spectral radius of the composition operator. 
In Section~\ref{sec:self-similar} we show that the rest of the spectrum is a scaled version of the peripheral spectrum and so complete the proof of the above theorem. 

\begin{remark}
    \label{rem:N-is-zero}
    The resonance spectrum of \(\Phi\) on \(\cC^{\infty}_{0}\) is equal to \(\{1\}\) because, in the case \(N=0\), the system reduces to the study of a toral automorphism and the resonance spectrum can be shown by considering a Fourier series decomposition on the torus.
    The spectrum of \(\cL : \cB_{0}^{p,q} \to \cB_{0}^{p,q}\) is the same. 
    Some of the arguments in the paper still hold in the case \(N=0\) however the peripheral spectrum is different in this case and the argument (see Section~\ref{sec:self-similar}) which deduces the inner part of the spectrum from the peripheral spectrum fails.
\end{remark}

In Section~\ref{sec:inv-dist} we study the exterior part of the spectrum. 
In this present setting of nil-automorphisms, the spectrum of \({\left.\cL\right|}_{\ker_N(\dv)}\) corresponds to the ``invariant distributions'' which were identified by Flaminio \& Forni~\cite{FF06}.
They showed that the spectrum of the operator restricted to the kernel of \(\dv\) has multiplicity \(K \abs{N}\) and consists of eigenvalues of absolute value \(\lambda^{-\frac{1}{2}}\).
However they worked with spaces of distributions whereas we work with anisotropic spaces. 
As such their work simply tells us that the spectrum of \({\left.\cL\right|}_{\ker_N(\dv)}\) is contained within the given set of multiplicity \(K \abs{N}\), consisting of eigenvalues of absolute value \(\lambda^{-\frac{1}{2}}\).
This already, together with the above result, allows a substantial description of the spectrum.
To complete the story we would like to guarantee that the invariant distributions identified by Flaminio \& Forni~\cite{FF06} are present in the anisotropic spaces \(\cB_{N}^{p,q}\).
I.e., we need the following assumption.
For any \(r\in\bN_\infty\) denote by \(\cD_{N}^{r}\) the elements of the dual space \({\cC^{r}(M)}'\) which have support in \(\cC_{-N}^{\infty}\).

\begin{assumption}
    \label{ass:inv-dist-in-Bpq}
    Suppose that \(\cD \in \cD_N^1\) and that \(V_{*}\cD = 0\).
    Then, for every \(p\in\bN_0\) there exists \(h \in \cB_{N}^{p,1}\) such that \(\cD = \iota h\).
\end{assumption}

In other words, this assumption says that if a distribution is invariant in the \(\dv\) direction then it is contained within our spaces. 
Our anisotropic spaces require good regularity in the \(\dv\) direction and allow distribution-like behaviour in the \(\dw\) direction and so it is convincing.
Unfortunately we don't include a proof of this claim. 
Note that this assumption is not used in any of the sections prior to Section~\ref{sec:inv-dist}.
In Section~\ref{sec:inv-dist} we use the assumption and hence upgrade the above results to the following.

\begin{theorem}
    \label{thm:full-spectrum}
    Let \(M\) be a Heisenberg nilmanifold with \(K\) associated to the lattice~\eqref{eq:gamma-K}.
    Let \(\Phi : M \to M\) be a partially hyperbolic automorphism~\eqref{eq:part-hyp} with \(\lambda > 1\).
    Suppose that Assumption~\ref{ass:inv-dist-in-Bpq} holds.
    For each \(N \neq 0\), there exist a set of unit complex numbers \({\{\mu_j\}}_{j=1}^{K\abs{N}}\) and a family of Banach spaces \(\{\cB_{N}^{p,q}\}_{p,q \in \bN}\) such that for all \(p \in \bN_0\), \(q \in \bN\),
    \begin{enumerate} 
        \item The operator \(\cL : \cC_{N}^\infty \to \cC_{N}^\infty\), defined as \(h \mapsto h \circ \Phi\), extends to a bounded linear operator on \(\cB_{N}^{p,q}\);
        \item The spectrum\footnote{The extension of the operator is denoted by the same symbol.} of \(\cL : \cB_{N}^{p,q} \to \cB_{N}^{p,q}\), outside of \(\{\abs{z} \leq \lambda^{-\min{\{p,q\}}}\}\), is equal to
        \[
            \big\{\lambda^{-(\frac{1}{2} + n)}\mu_{j} : 1 \leq j \leq K \abs{N}, n\in \{0,\ldots,\min{\{p,q\}}-1\} \big\},
        \]
        with values repeated according to the multiplicity of the eigenvalues. 
    \end{enumerate}
\end{theorem}

The above spectral result implies the analogous resonance result, as per the reference~\cite{FGL19} yet without Jordan blocks in this case.

\begin{theorem}
    \label{thm:resonances}
    Let \(M\) be a Heisenberg nilmanifold with \(K\) associated to the lattice~\eqref{eq:gamma-K}.
    Let \(\Phi : M \to M\) be a partially hyperbolic automorphism~\eqref{eq:part-hyp} with \(\lambda > 1\).
    For each \(N \neq 0\) there exists a set of unit complex numbers \({\{\mu_j\}}_{j=1}^{K\abs{N}}\) such that, setting
    \[
        \Xi_N = \big\{\lambda^{-(\frac{1}{2} + n)}\mu_{j} : 1 \leq j \leq K \abs{N}, n\in \bN_{0} \big\},
    \]
    then, for any \(g,h \in \cC^{\infty}_{N}\) and for any \(\epsilon > 0\), there is an asymptotic expansion
    \[
        \int g \cdot h\circ \Phi^n \ d\nu = \sum_{\substack{\xi\in\Xi_N \\ \abs{\xi}\geq \epsilon}} \xi^n  c_{j,k}(g,h) + \littleo{\epsilon^n}
    \]
    where \(c_{j,k}(g,h)\) are non-zero, finite rank, bi-linear functions of $g$ and $h$.
\end{theorem}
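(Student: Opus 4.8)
The plan is to read the asymptotic expansion off the quasi-compact spectral picture of Theorem~\ref{thm:spectrum}, after rewriting the correlation as a dual pairing. Fix $N\neq 0$ and let $\{\mu_j\}_{j=1}^{K\abs{N}}$ be the unit complex numbers furnished by Theorem~\ref{thm:spectrum}. For $g,h\in\cC^\infty_N$, decomposing in the $\dz$-direction shows that $\int g\cdot h\circ\Phi^n\,d\nu$ only sees the matching frequency sector and equals $\ell_g(\cL^n h)$, where $\ell_g$ is the distribution $\psi\mapsto\int g\,\psi\,d\nu$ (which, restricted to $\cC^1_{-N}$, is an element of $\cD^1_N$) and $\cL^n h=h\circ\Phi^n\in\cC^\infty_N$. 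The first point to establish is that, for smooth $g$, the functional $\ell_g$ extends to a bounded linear functional on each $\cB^{p,q}_N$; since $\cC^\infty_N$ is dense in $\cB^{p,q}_N$ this amounts to the estimate $\abs{\ell_g(\psi)}\le C_g\,\norm{\psi}_{\cB^{p,q}_N}$ for $\psi\in\cC^\infty_N$, which should follow from the construction of the anisotropic norm, that norm being controlled on smooth functions by a Sobolev-type norm in the (stable and neutral) directions along which a test distribution must be regular in order to pair with $\cB^{p,q}_N$.

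Granting this, Theorem~\ref{thm:spectrum} together with the remark preceding Theorem~\ref{thm:resonances} provides, for each $p,q$, a spectral decomposition on $\cB^{p,q}_N$ of the form
\[
    \cL^n=\sum_{\xi}\xi^n\,\Pi_\xi+R_{p,q}^n,
\]
the finite sum running over the isolated eigenvalues $\xi=\lambda^{-(\frac12+m)}\mu_j$ with $\abs{\xi}>\lambda^{-\min\{p,q\}}$, each $\Pi_\xi$ a finite-rank spectral projector onto a semisimple eigenspace (no Jordan block), and $\norm{R_{p,q}^n}_{\cB^{p,q}_N}\le C_{p,q}\,\rho^n$ for any fixed $\rho\in(\lambda^{-\min\{p,q\}},1)$. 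Applying $\ell_g$ to $\cL^n h$ gives
\[
    \int g\cdot h\circ\Phi^n\,d\nu=\sum_\xi\xi^n\,\ell_g(\Pi_\xi h)+\ell_g(R_{p,q}^n h),
\]
with remainder bounded by $C_{p,q}\,\norm{\ell_g}\,\norm{h}_{\cB^{p,q}_N}\,\rho^n$.

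Now fix $\epsilon>0$, choose $p,q$ so large that $\lambda^{-\min\{p,q\}}<\epsilon$, and pick $\rho\in(\lambda^{-\min\{p,q\}},\epsilon)$; the remainder is then $\littleo{\epsilon^n}$. The eigenprojectors are intrinsic: for $p'\ge p$, $q'\ge q$ the continuous, dense inclusion $\cB^{p',q'}_N\hookrightarrow\cB^{p,q}_N$ intertwines $\cL$, so $\Pi_\xi$, and hence $c_\xi(g,h):=\ell_g(\Pi_\xi h)$, is independent of $p,q$ once $\xi$ lies in the resolved window. Letting $p,q\to\infty$ therefore produces the full set $\Xi_N=\{\lambda^{-(\frac12+m)}\mu_j:1\le j\le K\abs{N},\ m\in\bN_0\}$, and since only finitely many of its elements have modulus $\ge\epsilon$ we obtain precisely the stated expansion, with $c_{j,k}(g,h)=\ell_g(\Pi_{\xi_{j,k}}h)$ bilinear in $(g,h)$ (conjugate-linear in $g$ once the conjugation sending $\cC^\infty_N$ to $\cC^\infty_{-N}$ is inserted) and of finite rank because $\operatorname{rank}\Pi_{\xi_{j,k}}<\infty$.

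Finally one must check that each $c_{j,k}$ is not the zero bilinear form. I would extract this from the construction underlying Theorem~\ref{thm:spectrum}: that proof should exhibit, for each $\xi_{j,k}$, an explicit eigenfunction spanning the range of $\Pi_{\xi_{j,k}}$ (together with a dual eigendistribution), so that taking $h$ equal to such an eigenfunction and $g$ with $\ell_g$ not annihilating that line makes $c_{j,k}(g,h)\neq 0$. The two places where genuine work is needed are (i) the boundedness of $\ell_g$ on $\cB^{p,q}_N$, i.e.\ identifying enough of the dual space, and (ii) this non-vanishing; both rely on having the explicit (co)eigenfunctions from the construction of $\cB^{p,q}_N$ to hand. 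The rest is the standard passage from a quasi-compact transfer operator to correlation asymptotics (as in \cite{GL19,FGL19,BS20}).
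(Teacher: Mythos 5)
Your proof is essentially the paper's, spelled out: the paper's own proof simply invokes Lemmas~\ref{lem:outer-spec} and~\ref{lem:iterated-spec}, leaving implicit the standard passage from the quasi-compact spectral decomposition of Theorem~\ref{thm:spectrum} to correlation asymptotics, which is exactly what you carry out. Both of the points you flag as ``genuine work needed'' are already supplied by Lemma~\ref{lem:distributions}, which gives \(\abs{\langle\iota h,g\rangle}\lesssim\norm{h}_{p,q}\norm{g}_{\cC^p}\) (so \(\ell_g\) is bounded on every \(\cB^{p,q}_N\)) and the injectivity of \(\iota\) (so the pairings \(\ell_g\) separate the nonzero range of each \(\Pi_\xi\), giving the non-vanishing without any recourse to explicit eigenfunctions); note also that the inclusion~\eqref{eq:continuous} runs the other way in \(q\) (it requires \(q'\le q\), not \(q'\ge q\)), which is harmless here since the paper relies on~\cite[Lemma~A.1]{BT08}, which needs only a common dense subspace rather than nested spaces.
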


See the work of Faure, Gouëzel \& Lanneau~\cite{FGL19} and references within for further discussion on the topic of resonances in dynamical systems.

The present work is closely related to the work of Flaminio \& Forni~\cite{FF06} and to the work of Faure \& Tsujii~\cite{FT15} (see also \cite{Faure07}).

In the first mentioned work, the authors study a cocycle rather than just the periodic case as here.
However, due to their choice of Hilbert space (standard Sobolev space, not anisotropic) they are unable to deduce anything about the spectrum of the operator and consequently take an indirect route in order to deduce the resonance spectrum.

The latter of the two mentioned works is a study of the ``prequantum transfer operator'' of Anosov diffeomorphism on circle bundles of symplectic manifolds.
Such a setting includes the setting of the present work but also allows for non-affine systems which are close to affine. 
Their results describe bands of spectrum which correspond to the circles of eigenvalues deduced in the present work.
Anisotropic Banach spaces are also used there in order to achieve these results.
However in that case the spaces are much more complex and the study of the transfer operator is highly technical and involved.
Indeed an entire 236 page book is devoted to the proof of the spectral result.
In this present work, we rely on purely geometric constructions for anisotropic space \(\cB^{p,q}_{N}\) (see Definition \ref{def:Bpq}) on the nilmanifold, which, although at present limited to the affine case, are much simpler than those previously used. 

Another particular advantage of the anisotropic Banach spaces (compared to \cite{FF06}) is utilized in Section~\ref{sec:self-similar} where the full spectrum is obtained from the peripheral spectrum. 
This takes the place of an argument concerning the formal inverse of a given operator of the cohomological equations in work of Flaminio \& Forni (see \cite[A.3]{FF06}). 
Potentially, the construction of anisotropic space in this paper is an initial step toward defining transfer cocycle to fully extend results to the non-algebraic systems (see \cite[\S 6]{Forni20}).

\section{Anisotropic spaces}
\label{sec:anisotropic}

In this section we define norms on \(\cC_N^\infty\) and then define Banach spaces by completion with respect to the norms.
We also explore various basic properties of these norms.
The norms used are similar to the geometric anisotropic norms often used for hyperbolic systems \cite{GL19,GL06,GL08} and those used by Faure, Gouëzel \& Lanneau~\cite{FGL19} for pseudo-Anosov maps.

Let \(\Phi:M \to M\) and \(N\in \bZ\), as introduced in Section~\ref{sec:results}, be fixed for the remainder of this section.
Here we permit the case \(N=0\).
For each \(t\in \bR\) let
\[
    \varphi_{t}^{\dw} : M \to M
\]
be the flow generated by $\dw$, defined as \((x,y,z) \mapsto (x,y,z) * \exp(t\dw)\). 
In other words, the flow corresponds to sliding along the \(\dw\)-foliation. 
The flows \((\varphi_{t}^{\dv})_{t\in\bR}\) and \((\varphi_{t}^{\dz})_{t\in\bR}\) are defined similarly. 

Observe that the partial hyperbolicity \eqref{eq:part-hyp} implies that, for each \(t\in \bR\)  
\begin{equation}
    \label{eq:renorm}
    \Phi\circ\varphi_{t}^{\dw}  =  \varphi_{\lambda t}^{\dw}\circ \Phi.
\end{equation}
This is the way in which the action of \(\Phi\) can be used to renormalize the flow along \(\dw\).

As mentioned earlier, \(M\) is a fiber bundle over a torus. 
The \emph{abelianization} of the Heisenberg group $\bH$, defined by $\bH^{ab} = \bH / [\bH,\bH]$, is isomorphic to $\bR^2$.
The abelianization of the lattice is defined as $\Gamma^{ab} = \Gamma / [\Gamma,\Gamma].$ 
Thus there is a natural projection 
\begin{equation}
    \label{eq:project}
    \pi: M \rightarrow  \bH^{ab}/\Gamma^{ab} \simeq \mathbb{T}^2,
\end{equation}
where $\pi(x,y,z) = (x,y)$. (See e.g., \cite{AGH63} for these and related details.)
It is known that the following are equivalent:
\begin{enumerate}
    \item The flow $(\varphi_t^\dw)_{t\in\bR}$ on $M$ is minimal;
    \item The projected flow $(\pi \circ \varphi_t^\dw)_{t\in\bR}$ on $\bT^{2}$ is an irrational linear flow.
\end{enumerate}
Since \(\Phi\) is partially hyperbolic, the projection onto $\bT^{2}$ is a hyperbolic automorphism.
This implies that the vector field \(\dw\) tangent to invariant unstable manifolds has irrational slope on $\bT^{2}$,  consequently $(\varphi_t^\dw)_{t\in\bR}$ is minimal.
In particular each leaf of the \(\dw\)-foliation is dense. 
The same result is also true for the leaves of \(\dv\)-foliation.

\begin{remark}
    \label{rem:projection}
    As a fundamental domain for \(M\) we can choose \([0,1) \times [0,1) \times [0,1/K)\).
    The identifications of the edges are\footnote{In order to obtain the identifications we observe that \((1,0,0) * (0,y,z) = (1,y,z+y)\), \((0,1,0) * (x,0,z) = (x,1,z)\), \((0,0,1/K) * (x,y,0) = (x,y,1/K)\).}  
    \[
        (0,y,z) \sim (1,y,z+y/K ),\ (x,0,z) \sim (x,1,z), (x,y,0) \sim (x,y,1/K).
    \]
    I.e., two faces of the cube are identified by standard translation and the third is identified with a twist (cf. \cite{Shi14b}).
    Since functions in \(\cC^\infty_N\) have a specific behaviour in the third coordinate, \(\cC^\infty_N\) can be identified with a space of functions defined on the unit square with the additional requirement of how the function and derivatives match up at the boundary according to the identification of the edges.
    There cannot exist functions in \(\cC^\infty_N\) supported in a neighbourhood of a point.
    However this justifies the existence of functions in \(\cC^\infty_N\) which are supported on the neighbourhood of \(\pi^{-1} (x)\) for any \(x \in \mathbb{T}^2\).
\end{remark}

We fix \(\delta > 0\), once and for all, much smaller than the diameter of \(M\).  
There exists a covering of \(\pi (M) = \bT^2\) consisting of sets of diameter not greater than \(\delta\) and, subordinated to this, there exists a \(\cC^\infty\) partition of unity.
Taking the pull back under \(\pi\) leads to a partition of unity of \(M\), i.e., a set of functions \(\{\rho_k\}_k\) such that, for each \(k\),
\begin{equation}
    \label{eq:part-unity}
    \rho_k \in \cC^\infty(M), \quad \sum_{k} \rho_k = 1.   
\end{equation}
The projection of the support of \(\rho_k\) will be contained within a \(\delta\)-ball however \(\rho_k\) will be constant along each fibre, i.e., constant along a circle \(\pi^{-1}(x)\) for each \(x \in \bT^2\).
In particular this means that if \(h \in \cC^\infty_N\) then \(\rho_k \cdot h \in \cC^\infty_N\).

For notational convenience let \(I_\delta = (-\delta,\delta)\) and let \(\cS(I_{\delta})\) denote the set of \(\cC^\infty\) functions with support compactly contained in the interval \(I_{\delta}\).
For \(h\in \cC_N^\infty\), \(\eta \in \cS(I_{\delta})\) and \(m\in M\) let,
\begin{equation}
    \label{eq:def-eta}
    \ell_{\eta,m}(h)
    = \int_{-\delta}^{\delta} \eta(t) \cdot h\circ \varphi_{t}^{\dw}(m) \ dt.
\end{equation}
For \(p\in\bN_0\), \(q\in\bN\) we define a norm on \(\cC_N^{\infty}\),
\[
    \norm{h}_{p,q}= \sup
    \left\{ \abs{\ell_{\eta,m}(\dv^j h)} : 0 \leq j\leq p, m\in M, \eta \in \cS(I_{\delta}),  \norm{\eta}_{\cC^{q}}\leq 1\right\}.
\]

\begin{remark}
    We use the \(\cC^r\) norm defined as \(\norm{f}_{\cC^r} = \sup_{k\leq r} 2^{r-k} \abs{\smash{f^{(k)}}}_{\infty} \).
    This has the convenient property that \(\norm{f g}_{\cC^r} \leq \norm{f}_{\cC^r} \norm{g}_{\cC^r}\).
\end{remark}

\begin{definition}
    \label{def:Bpq}
    The Banach space \(\cB_{N}^{p,q}\) is defined as the completion of \(\cC_N^\infty\) with respect to the norm $\norm{\cdot}_{p,q}.$  
\end{definition}

By definition, whenever \(p\leq p'\) and \(q'\leq q\), there is a continuous inclusion 
\begin{equation}
    \label{eq:continuous}
    \cB_{N}^{p',q'}(M) \subseteq \cB_{N}^{p,q}(M).
\end{equation}
These norms are \(\cC^r\)-like in the \(\dv\)-direction and distribution-like in the \(\dw\)-direction.

The following clarifies the dependence of the norm on the choice of \(\delta > 0\).

\begin{lemma}
    \label{lem:norm-scale}
    Given \(q\in \bN\), there exists \(C>0\) such that, for all \(\eta \in \cC^{\infty}(\bR)\) supported in some interval \(A \subset \bR\) of length \(\abs{A} \geq 2\delta\) and for all \(h\in \cC^{\infty}_{N}\),
    \[
        \abs{\int \eta(t) \cdot  h\circ \varphi_{t}^{\dw}(m) \ dt}
        \leq C \abs{A} \norm{\eta}_{\cC^q} \abs{h}_{0,q}.
    \]
\end{lemma}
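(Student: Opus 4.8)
The plan is to reduce the statement about a general interval $A$ of length $\abs{A} \ge 2\delta$ to the definition of $\abs{h}_{0,q}$, which only controls integrals against test functions supported in $I_\delta = (-\delta,\delta)$. The natural device is a partition of unity on $A$: cover $A$ by roughly $\abs{A}/\delta$ intervals each of length less than $2\delta$, with a subordinate smooth partition of unity $\{\chi_i\}$ whose derivatives up to order $q$ are bounded by a constant depending only on $\delta$ and $q$ (this is possible by rescaling a fixed bump function). Then write $\eta = \sum_i \eta\chi_i$, so that each $\eta\chi_i$ is supported in an interval of length less than $2\delta$, and estimate $\int \eta\chi_i(t)\, h\circ\varphi_t^{\dw}(m)\, dt$ term by term.

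The key step is that each term can be brought into the form $\ell_{\tilde\eta,\tilde m}(h)$ appearing in the definition of the norm. If the $i$-th interval is centred at $t_i$, substitute $t = t_i + s$ and use the flow property $\varphi_{t_i+s}^{\dw} = \varphi_s^{\dw}\circ\varphi_{t_i}^{\dw}$: the integral becomes $\int \tilde\eta_i(s)\, h\circ\varphi_s^{\dw}(m_i)\, ds$ where $\tilde\eta_i(s) = (\eta\chi_i)(t_i+s)$ is supported in $(-\delta,\delta)$ and $m_i = \varphi_{t_i}^{\dw}(m) \in M$. By definition this is $\ell_{\tilde\eta_i, m_i}(h)$, hence bounded by $\norm{\tilde\eta_i}_{\cC^q}\abs{h}_{0,q}$. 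Since translation does not change the $\cC^q$ norm and $\norm{\eta\chi_i}_{\cC^q} \le \norm{\eta}_{\cC^q}\norm{\chi_i}_{\cC^q} \le C'\norm{\eta}_{\cC^q}$ by the submultiplicativity of the $\cC^q$ norm noted in the remark, each term is bounded by $C'\norm{\eta}_{\cC^q}\abs{h}_{0,q}$.

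Finally, sum over $i$. The number of intervals needed to cover $A$ is at most $C''\abs{A}/\delta$ for a universal $C''$, so the total is bounded by $(C'C''/\delta)\abs{A}\norm{\eta}_{\cC^q}\abs{h}_{0,q}$, which gives the claim with $C = C'C''/\delta$ depending only on $q$ (and the fixed $\delta$). I do not expect any serious obstacle here; the only point requiring mild care is arranging the covering and partition of unity so that the $\cC^q$ bounds on the $\chi_i$ are uniform and the overlap multiplicity is bounded, but since $\delta$ is fixed once and for all this is routine — one simply takes a fixed smooth partition of unity of $\bR$ subordinate to the cover by intervals $(\delta(k-1),\delta(k+1))$, $k\in\bZ$, and restricts attention to the finitely many indices whose support meets $A$.
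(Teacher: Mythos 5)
Your proof is correct and follows essentially the same approach as the paper: decompose $\eta$ via a $\delta$-scale partition of unity on $\bR$ into $O(\abs{A}/\delta)$ pieces, each supported in an interval of length less than $2\delta$, and estimate term by term against $\abs{h}_{0,q}$. You are in fact slightly more explicit than the paper about the recentering step (substituting $t = t_i + s$ and moving the base point to $m_i = \varphi_{t_i}^{\dw}(m)$), which is needed to bring each piece into the exact form $\ell_{\tilde\eta_i, m_i}(h)$ with $\tilde\eta_i \in \cS(I_\delta)$; the paper leaves this implicit.
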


\begin{proof}
    Using a $\delta$-periodic partition of unity\footnote{I.e., fix a smooth function \(\rho : \bR \to [0,1]\) such that the support of \(\rho\) is contained within \([-\delta,\delta]\) and, for every \(x \in \bR\), \(\sum_{j\in \bZ} \rho_j(x) = 1\) where we have defined \(\rho_j(x) = \rho(x-j\delta)\).} on $\bR$, there exist $C_1,C_2>0$ such that, for any \(\eta\) we can write
    \(\eta = \sum_{i= 1}^{\lfloor C_1 \abs{A}/\delta \rfloor} \tilde{\eta}_{i}\)
    where each \(\tilde{\eta}_{i}\) is supported on an interval smaller than \(2 \delta\) and \(  \norm{\tilde{\eta}_{i}}_{\cC^{q}}\leq C_2  \norm{\eta}_{\cC^q}\).
    Consequently
    \begin{align*}
        \abs{\int \eta(t) \cdot  h\circ \varphi_{t}^{\dw}(m) \ dt}
         & \leq \sum_{i= 1}^{\lfloor C_1 \abs{A}/\delta \rfloor}  \abs{\int \tilde{\eta}_{i}(t) \cdot  h\circ \varphi_{t}^{\dw}(m) \ dt} \\
         & \leq C_1 \abs{A} \delta^{-1} C_2 \norm{\eta}_{\cC^q} \abs{h}_{0,q}.  \qedhere
    \end{align*}
\end{proof}

In the following we see that the linear functionals used in the definition of the norm have a certain continuous dependence on where they are centred.

\begin{lemma}
    \label{lem:slide}
    Suppose that \(m \in M\), \(q\in\bN\), \(0<\epsilon<\delta\), and \(\eta \in \cS(I_{\delta})\), \(\norm{\eta}_{\cC^q} \leq 1\).
    Let \(\tilde{m} = \varphi_c^\dz \circ \varphi_b^\dw \circ \varphi_a^\dv (m)\) and \(\tilde{\eta}(t) = e^{- 2\pi i NK (c + at)} \eta(t)\).
    Then there exists \(C>0\) such that for all \(h \in  \cC^\infty_N\) and any \(a,b,c \in \bR\) with \(\abs{a}, \abs{b} \leq \epsilon\), 
    \[
        \abs{\ell_{\eta,m}(h) - \ell_{\tilde{\eta},\tilde{m}}(h)} \leq C \epsilon \norm{h}_{1,q-1}.
    \]
\end{lemma}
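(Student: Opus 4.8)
The plan is to reduce the estimate to the definition of $\norm{\cdot}_{p,q}$ by interpolating in the parameters $a,b$ and integrating by parts in the flow direction. The first step is to eliminate the central variable $c$: since $\dz$ is central the flow $\varphi_t^\dw$ commutes with $\varphi_c^\dz$, and since $h\in\cC_N^\infty$ satisfies $h\circ\varphi_s^\dz = e^{2\pi i NK s}h$, the phase $e^{-2\pi i NKc}$ built into $\tilde\eta$ cancels exactly the contribution of $\varphi_c^\dz$ to $h\circ\varphi_t^\dw(\tilde m)$ — so in particular the absence of any bound on $c$ is harmless. Writing $m' = \varphi_b^\dw\circ\varphi_a^\dv(m)$ and $q_t = \varphi_t^\dw(m') = \varphi_{t+b}^\dw(\varphi_a^\dv(m))$, a short computation gives
\[
    \ell_{\tilde\eta,\tilde m}(h) = F(a,b) := \int_{-\delta}^{\delta} e^{-2\pi i NKat}\,\eta(t)\, h(q_t)\,dt ,
    \qquad F(0,0) = \ell_{\eta,m}(h) ,
\]
so it remains to bound $F(a,b)-F(0,0)$ for $\abs{a},\abs{b}\leq\epsilon$.

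Along the segment $s\mapsto(sa,sb)$ one has $F(a,b)-F(0,0)=\int_0^1\big(a\,\partial_a F + b\,\partial_b F\big)(sa,sb)\,ds$, so, since $\abs a,\abs b\le\epsilon$, it suffices to bound $\partial_a F$ and $\partial_b F$ by a constant times $\norm{h}_{1,q-1}$, uniformly over parameters of modulus $<\delta$. For $\partial_b F$, differentiating under the integral turns $h(q_t)$ into $(\dw h)(q_t)=\tfrac{d}{dt}h(q_t)$; integrating by parts in $t$ — the boundary terms vanish because $\eta$ is compactly supported in $I_\delta$ — moves the $t$-derivative onto the weight and gives $\partial_b F(a,b)=\ell_{\zeta,m'}(h)$ with $\zeta(t)=e^{-2\pi i NKat}\big(2\pi i NKa\,\eta(t)-\eta'(t)\big)\in\cS(I_\delta)$. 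For $\partial_a F$ the key input is the pushforward identity $D\varphi_{t+b}^\dw(\dv)=\dv+(t+b)\dz$, a one-line computation from $\varphi_s^\dw(x,y,z)=(x,y+s,z+xs)$ (equivalently a manifestation of $[\dv,\dw]=\dz$); combined with $\dz h=2\pi i NKh$ it yields $\tfrac{d}{da}h(q_t)=(\dv h)(q_t)+2\pi i NK(t+b)h(q_t)$, and upon adding the contribution of $\tfrac{\partial}{\partial a}e^{-2\pi i NKat}$ the terms linear in $t$ cancel, leaving $\partial_a F(a,b)=\ell_{\zeta_1,m'}(\dv h)+2\pi i NKb\,\ell_{\zeta_1,m'}(h)$ with $\zeta_1(t)=e^{-2\pi i NKat}\eta(t)\in\cS(I_\delta)$.

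To conclude, the submultiplicativity $\norm{fg}_{\cC^{q-1}}\le\norm{f}_{\cC^{q-1}}\norm{g}_{\cC^{q-1}}$, together with the elementary inequalities $\norm{\eta'}_{\cC^{q-1}}\le\norm{\eta}_{\cC^q}\le1$ and $\norm{e^{-2\pi i NKat}}_{\cC^{q-1}}\le C(N,K,\delta,q)$ for $\abs a<\delta$, bound $\norm{\zeta}_{\cC^{q-1}}$ and $\norm{\zeta_1}_{\cC^{q-1}}$ by a constant depending only on $N,K,\delta,q$ (splitting into real and imaginary parts if the test functions in the norm are required real-valued). The definition of $\norm{\cdot}_{p,q}$ then gives $\abs{\ell_{\zeta,m'}(h)}\le\norm{\zeta}_{\cC^{q-1}}\norm{h}_{0,q-1}$ and $\abs{\ell_{\zeta_1,m'}(\dv h)}\le\norm{\zeta_1}_{\cC^{q-1}}\norm{h}_{1,q-1}$, and since $\norm{h}_{0,q-1}\le\norm{h}_{1,q-1}$ by~\eqref{eq:continuous}, the interpolation formula of the previous paragraph yields $\abs{\ell_{\eta,m}(h)-\ell_{\tilde\eta,\tilde m}(h)}\le C\epsilon\norm{h}_{1,q-1}$.

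The only genuinely delicate point is the pushforward identity $D\varphi_{t+b}^\dw(\dv)=\dv+(t+b)\dz$ and the phase bookkeeping it triggers; I expect the cancellation of the $t$-linear terms in $\partial_a F$ to be the step most prone to sign errors, and would verify it in explicit coordinates $(x,y,z)$. Everything else is a direct unwinding of the definition of the anisotropic norm.
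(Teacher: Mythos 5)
Your proof is correct and follows essentially the same route as the paper: both eliminate the central phase $c$ via $h\circ\varphi_c^\dz = e^{2\pi i NK c}h$ and then control the $a$- and $b$-dependences separately, producing one error term of size $\epsilon\norm{h}_{1,q-1}$ from the contraction direction and one of size $\epsilon\norm{h}_{0,q-1}$ from the $\dw$-shift. The only difference is bookkeeping: you integrate $\partial_a F$ and $\partial_b F$ along the segment $(sa,sb)$ and use the pushforward identity $(\varphi_{t+b}^\dw)_*\dv = \dv + (t+b)\dz$, whereas the paper performs the equivalent algebraic decomposition and Taylor estimate directly in \eqref{eq:slide1}--\eqref{eq:slide3} via the flow commutation $\varphi_t^\dw\circ\varphi_a^\dv = \varphi_a^\dv\circ\varphi_t^\dw\circ\varphi_{at}^\dz$ --- two packagings of the same fact $[\dv,\dw]=\dz$.
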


\begin{proof}
    Rearranging the integral defining the linear functional,
    \begin{equation}
        \label{eq:slide1}
        \begin{aligned}
            \ell_{\tilde{\eta},\tilde{m}}(h)
             & = \int \tilde{\eta}(t) \cdot h \circ \varphi_{t+b}^{\dw} \circ \varphi_a^\dv \circ \varphi_c^\dz (m) \ dt                                     \\
             & = \int \tilde{\eta}(t) \cdot h \circ \varphi_{t}^{\dw} \circ \varphi_a^\dv \circ \varphi_c^\dz (m) \ dt                                       \\
             & \ \ + \int \left[\tilde{\eta}(t-b) - \tilde{\eta}(t)\right] \cdot h \circ \varphi_{t}^{\dw} \circ \varphi_a^\dv \circ \varphi_c^\dz (m) \ dt.
        \end{aligned}
    \end{equation}
    Continuing with the first part of the last term, recalling that \(\varphi_{t}^{\dw} \circ \varphi_a^\dv = \varphi_a^\dv  \circ \varphi_{t}^{\dw} \circ \varphi_{at}^\dz\),
    \begin{multline}
        \label{eq:slide2}
            \int \tilde{\eta}(t) \cdot h \circ \varphi_{t}^{\dw} \circ \varphi_a^\dv \circ \varphi_c^\dz (m) \ dt
              = \int \tilde{\eta}(t) \cdot h \circ \varphi_a^\dv  \circ \varphi_{t}^{\dw} \circ \varphi_{c+at}^\dz (m) \ dt                               \\
             = \int \tilde{\eta}(t) \cdot h \circ \varphi_{t}^{\dw} \circ \varphi_{c+at}^\dz (m) \ dt                                                    
              \ \ + \int \int_{0}^{a} \tilde{\eta}(t) \cdot \dv h \circ \varphi_{s}^{\dv} \circ \varphi_{t}^{\dw} \circ \varphi_{c+at}^\dz (m) \ ds \ dt.
    \end{multline}
    Since \(h \in \cC^{\infty}_{N}\) and hence \(h \circ \varphi_{t}^{\dw}  \in \cC^{\infty}_{N}\), by definition of \(\tilde{\eta}\),
    \begin{equation}
        \label{eq:slide3}
        \begin{aligned}
            \int \tilde{\eta}(t) \cdot h \circ \varphi_{t}^{\dw} \circ \varphi_{c+at}^\dz (m) \ dt
             & = \int e^{2\pi i NK (c + at)} \tilde{\eta}(t) \cdot h \circ \varphi_{t}^{\dw} (m) \ dt \\
             & = \int \eta(t) \cdot h \circ \varphi_{t}^{\dw} (m) \ dt                               \\
             & = \ell_{\eta,m}(h).
        \end{aligned}
    \end{equation}
    Combining the above equalities \eqref{eq:slide1}, \eqref{eq:slide2} and \eqref{eq:slide3}, we have shown that
    \begin{equation}
        \label{eq:slide4}
        \begin{aligned}
            \abs{\ell_{\eta,m}(h) - \ell_{\tilde{\eta},\tilde{m}}(h)}
             & \leq \abs{\int \left[\tilde{\eta}(t-b) - \tilde{\eta}(t)\right] \cdot h \circ \varphi_{t}^{\dw} \circ \varphi_a^\dv \circ \varphi_c^\dz (m) \ dt} \\
             & \ \ + \abs{\int \int_{0}^{a} \tilde{\eta}(t) \cdot \dv h \circ \varphi_{s}^{\dv} \circ \varphi_{t}^{\dw} \circ \varphi_{c+at}^\dz (m) \ ds \ dt}.
        \end{aligned}
    \end{equation}
    For convenience when bounding the first of these two terms, let \(\zeta(t) = \tilde{\eta}(t) - \tilde{\eta}(t-b)\).
    Observe that the support of \(\zeta\) is contained within an interval of length \(2(\delta + \epsilon) \leq 4\delta\).
    Moreover, for each $n\in \bN$,  
    \[\zeta^{(n)}(t) = \tilde{\eta}^{(n)}(t) - \tilde{\eta}^{(n)}(t-b) = \int_{t-b}^{t} \tilde{\eta}^{(n+1)}(s) \ ds,\] and so \(\norm{\zeta}_{\cC^{q-1}} \leq b \norm{\tilde{\eta}}_{\cC^{q}}\).
    By assumption \(\norm{\eta}_{\cC^q} \leq 1\) and so there is an upper bound for \(\norm{\tilde{\eta}}_{\cC^{q}}\), uniform in \(\eta\), depending only on \(q\).
    Consequently, using also Lemma~\ref{lem:norm-scale}, there exists \(C_1>0\) such that,
    \begin{equation}
        \label{eq:slide5}
        \abs{\int \left[\tilde{\eta}(t-b) - \tilde{\eta}(t)\right] \cdot h \circ \varphi_{t}^{\dw} \circ \varphi_a^\dv \circ \varphi_c^\dz (m) \ dt} 
        \leq \epsilon C_1 \norm{h}_{0,q-1}.
    \end{equation}
    For the final term  \eqref{eq:slide4} which remains to estimate, 
    \begin{multline*}
        \int \left( \int_{0}^{a} \tilde{\eta}(t) \cdot \dv h \circ \varphi_{s}^{\dv} \circ \varphi_{t}^{\dw} \circ \varphi_{c+at}^\dz (m) \ ds \right) \ dt \\
        = \int_{0}^{a} \left( \int \tilde{\eta}(t) e^{2\pi i (c + at + st)N} \cdot \dv h  \circ \varphi_{t}^{\dw} \circ \varphi_{s}^{\dv} (m) \ dt \right) \ ds \\
        \leq a \norm{h}_{1,{q}} \norm{\smash{t \mapsto  \tilde{\eta}(t) e^{2\pi i (c + at + st)N} }}_{\cC^{q}}.
    \end{multline*}
    Consequently, there exists a constant \(C_2>0\), depending only on \(N\) and \(q\) such that this term is bounded above by
    \begin{equation}
        \label{eq:slide6}
        \epsilon C_2 \norm{h}_{1,q}.
    \end{equation}
    Combining the two estimates we have obtained, \eqref{eq:slide5} and \eqref{eq:slide6}, whilst noting that \(\norm{h}_{0,q-1} \leq \norm{h}_{1,q-1}\) and \(\norm{h}_{1,q} \leq \norm{h}_{1,q-1}\), completes the proof of the lemma.
\end{proof}

The following lemma means that one can identify \(\cB_{N}^{p,q}\) with a space of distributions. 
Recall that \(\cD_{N}^{r}\) denotes the elements of the dual space \({\cC^{r}(M)}'\) which have support in \(\cC_{-N}^{\infty}\).

\begin{lemma}
    \label{lem:distributions}
    Let \(p \in \bN_0\), \(q \in \bN\).
    The canonical inclusion map \(\iota:\cC^\infty_{N} \rightarrow \cD_{N}^{\infty}\) given by \(\langle \iota(h), g \rangle =  \int_{M} h \cdot g \) extends to a map \(\cB_{N}^{p,q} \to \cD_{N}^{q}\) which is continuous and injective.
\end{lemma}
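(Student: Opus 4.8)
The plan is to show that the inclusion $\iota$ is continuous from $(\cC^\infty_N, \norm{\cdot}_{p,q})$ into $\cD_N^q$, hence extends to $\cB_N^{p,q}$, and then separately to establish injectivity of the extension. For the continuity, I would first recall that $\cD_N^q \subset (\cC^q(M))'$ is a Banach space with the operator norm $\norm{\cD} = \sup\{\abs{\langle \cD, g\rangle} : \norm{g}_{\cC^q} \leq 1\}$, so it suffices to bound $\abs{\int_M h \cdot g\, d\nu}$ by a constant times $\norm{h}_{p,q}\norm{g}_{\cC^q}$ for $h \in \cC^\infty_N$ and $g \in \cC^q(M)$. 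The key idea is that the integral over $M$ against $g$ can be disintegrated along the $\dw$-foliation using the partition of unity $\{\rho_k\}_k$ from \eqref{eq:part-unity}: writing $\int_M h\cdot g\, d\nu = \sum_k \int_M \rho_k \cdot h\cdot g\, d\nu$ and then, on each chart, using a local coordinate system adapted to the flow $\varphi_t^\dw$ (so that one coordinate is the flow parameter $t$ and the transverse coordinates parametrize the leaves), Fubini's theorem expresses each summand as an integral over the transverse directions of a term of the form $\int \eta_{k,m}(t)\, h\circ\varphi_t^\dw(m)\, dt$ with $\eta_{k,m}$ built from $\rho_k$, $g$, and the Jacobian of the coordinate change. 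Since only $p=0$ regularity in $\dv$ is needed here (we take $j=0$ in the definition of the norm), each such integral is bounded by $\norm{\eta_{k,m}}_{\cC^q}\norm{h}_{0,q}$ after a possible rescaling via Lemma~\ref{lem:norm-scale}; summing over the finitely many charts and the transverse integrals, and observing $\norm{\eta_{k,m}}_{\cC^q} \lesssim \norm{g}_{\cC^q}$ uniformly, gives $\abs{\langle \iota(h), g\rangle} \leq C\norm{h}_{0,q}\norm{g}_{\cC^q} \leq C\norm{h}_{p,q}\norm{g}_{\cC^q}$. This proves $\iota$ extends continuously to $\bar\iota : \cB_N^{p,q} \to \cD_N^q$; that the extension lands in $\cD_N^q$ rather than merely $(\cC^q(M))'$ follows because each $\iota(h)$ with $h \in \cC^\infty_N$ pairs to zero against any $g$ with $Zg = 2\pi i N' K g$, $N' \neq -N$ (orthogonality of characters in the $Z$-direction), a closed condition preserved under limits.

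For injectivity, suppose $h \in \cB_N^{p,q}$ with $\bar\iota(h) = 0$. I would take a sequence $h_n \in \cC^\infty_N$ with $\norm{h_n - h}_{p,q} \to 0$; then $\iota(h_n) \to 0$ in $\cD_N^q$, i.e., $\int_M h_n \cdot g\, d\nu \to 0$ for every $g \in \cC^q(M)$. The goal is to deduce $\norm{h_n}_{p,q} \to 0$, which forces $h = 0$ in $\cB_N^{p,q}$. The point is that each defining functional $\ell_{\eta,m}(\dv^j h_n)$ can, up to a controlled error, be recovered from pairings $\int_M h_n \cdot g\, d\nu$ against suitable test functions $g \in \cC^\infty_N \subset \cC^q(M)$: one approximates the distribution $g \mapsto \ell_{\eta,m}(\dv^j g)$ — which is integration against $\eta$ along a single $\dw$-leaf through $m$, differentiated $j$ times in $\dv$ — by smearing it slightly in the transverse $\dv$ and $Z$ directions with a bump of width $\varepsilon$, producing a genuine function $g_\varepsilon \in \cC^\infty_N$ (here Remark~\ref{rem:projection} guarantees such functions exist, supported near $\pi^{-1}(\text{point})$), and Lemma~\ref{lem:slide} controls the smearing error by $C\varepsilon \norm{h_n}_{p,q}$ uniformly in $n$. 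Since $\sup_n \norm{h_n}_{p,q} < \infty$, sending first $n \to \infty$ (killing the pairing term) and then $\varepsilon \to 0$ shows $\ell_{\eta,m}(\dv^j h_n) \to 0$; a uniformity-in-$(\eta, m, j)$ version of this argument — using that the error bound from Lemma~\ref{lem:slide} is uniform over $\norm{\eta}_{\cC^q}\leq 1$ and $m \in M$ — upgrades pointwise convergence to $\norm{h_n}_{p,q} \to 0$.

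The main obstacle I anticipate is the injectivity argument, specifically making the "recover the functional from $L^2$-pairings" step uniform over all $m \in M$ and all $\eta$ with $\norm{\eta}_{\cC^q} \leq 1$ simultaneously, since the smeared test functions $g_\varepsilon$ depend on $(\eta, m)$ and one must ensure that the $\varepsilon \to 0$ limit can be taken at a rate independent of these parameters; this is where Lemma~\ref{lem:slide}'s uniform constant is essential and must be invoked carefully, possibly combined with a compactness argument over $m \in M$ and an equicontinuity argument over the unit ball of $(\cS(I_\delta), \norm{\cdot}_{\cC^q})$ in $\cC^{q-1}$. The continuity half, by contrast, is routine once the disintegration along the $\dw$-foliation is set up, modulo bookkeeping of the Jacobians and the partition of unity.
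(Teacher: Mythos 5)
Your argument follows the same two‑step route as the paper: continuity by disintegrating $\int_M h\cdot g$ along the $\dw$-leaves via the partition of unity~\eqref{eq:part-unity} (the paper writes exactly the leaf-wise identity $\int_{\mathcal{O}_{m,\delta}} h\cdot g = \int \eta(t)\,h\circ\varphi_t^{\dw}(m)\,dt$ with $\eta(t)=g\circ\varphi_t^{\dw}(m)$), and injectivity by the Gouëzel--Liverani-type smearing of a leaf-wise functional into a genuine test function supported near a fibre $\pi^{-1}(x)$, which is precisely the paper's argument via \cite[Prop.\,4.1]{GL06} and Remark~\ref{rem:projection}. Two minor points worth fixing: the smeared test function $g_\varepsilon$ must lie in $\cC^\infty_{-N}$ rather than $\cC^\infty_N$ in order to pair nontrivially against $h\in\cC^\infty_N$, and the uniformity-in-$(\eta,m,j)$ worry you raise at the end is unnecessary --- since each $\ell_{\eta,m}(\dv^j\cdot)$ extends to a norm-$\le 1$ functional on the completion and $\norm{h}_{p,q}$ remains the supremum of these, their pointwise vanishing on $h$ already gives $\norm{h}_{p,q}=0$.
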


\begin{proof}
    Using the previously introduced special partition of unity of \(M\) \eqref{eq:part-unity} and decomposing the integral into orbit segments of the flow \(\mathcal{O}_{m,\delta} = \left\{\varphi_t^\dw(m) : t\in (-\delta,\delta) \right\}\), one can show that for all \(h\in \cC_N^\infty\),
    \[
        \abs{\int_{M} h \cdot g} \lesssim \norm{h}_{p,q} \norm{g}_{\cC^p}.
    \]
    Here we have used that, for the leaf-wise integrals, \(\int_{\mathcal{O}_{m,\delta}}  h \cdot g = \int_{-\delta}^{\delta} \eta(t) \cdot h \circ \varphi^{\dw}_{t}(m)\ dt\) if we choose \(\eta(t) = g \circ \varphi^{\dw}_{t}(m)\).
    Considering the completion, this shows that any \(h \in \cB_{N}^{p,q}\) gives a distribution on \(M\) of order at most \(p\).
    
    That \(\iota\) is injective follows in the same way as the argument of Gouëzel \& Liverani~\cite[Proposition 4.1]{GL06}.
    We start by taking \(h \in \cB_{N}^{p,q}\), \(h\neq 0\).
    Consequently there exist some \(m\in M\) and \(\eta \in \cS(I_\delta)\) such that \(\int_{-\delta}^{\delta} \eta(t) \cdot h\circ\varphi_t^{\dw}(m)\ dt\) is non-zero.
    We then use this \(\eta\) to construct a \(g\in \cC^{\infty}_{-N}\) such that \(\langle \iota h, g\rangle \neq 0\).
    This can be done, as discussed in Remark~\ref{rem:projection}, in such a way that \(g\) is supported on a neighbourhood of \(\{\varphi^{\dz}_t(m): t\in \bR\}\). 
    This then implies that \(\iota (h) \in \cD_{N}^{q}\) is non-zero as required.
\end{proof}

The following compact embedding result and its argument are very similar to the one appearing in other works using geometric anisotropic space (e.g., \cite[\S5]{GL06} and \cite[\S2.2]{FGL19}).

\begin{lemma}
    \label{lem:compact-embed}
    Let \(p,p'\in \bN_0\), \(q,q'\in\bN\), \(p < p'\), and \(q' < q\).
    Then, the inclusion \(\cB_{N}^{p',q'} \subset \cB_{N}^{p,q}\) is compact.
\end{lemma}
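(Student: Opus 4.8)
The plan is as follows. Since \(\cB_{N}^{p,q}\) is complete, it suffices to show that the closed unit ball \(B\) of \(\cB_{N}^{p',q'}\) is totally bounded for \(\norm{\cdot}_{p,q}\). To this end I would approximate, uniformly over \(h\in B\), the family of linear functionals \(h\mapsto\ell_{\eta,m}(\dv^{j}h)\) with \(0\le j\le p\), \(m\in M\), \(\norm{\eta}_{\cC^{q}}\le 1\) — whose supremum is \(\norm{h}_{p,q}\) — by a single \emph{finite} family of functionals on \(\cB_{N}^{p',q'}\), up to an error proportional to \(\norm{h}_{p',q'}\); a diagonal extraction then yields precompactness. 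The spare derivative (\(p<p'\)) is consumed when discretising the base point \(m\), the spare order (\(q'<q\)) when discretising the test function \(\eta\). This parallels \cite[\S5]{GL06} and \cite[\S2.2]{FGL19}.

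First I would discretise the base point. Fix \(\epsilon>0\). Since \(\dv,\dw,\dz\) span the Lie algebra, \(M\) is covered by finitely many pieces, the \(k\)-th of which lies in \(\{\varphi_c^{\dz}\varphi_b^{\dw}\varphi_a^{\dv}(m_k):\abs{a},\abs{b},\abs{c}\le\epsilon\}\) for a chosen centre \(m_k\). For \(m\) in this piece and any \(\xi\in\cS(I_\delta)\) with \(\norm{\xi}_{\cC^{q}}\le 1\), write \(m=\varphi_c^{\dz}\varphi_b^{\dw}\varphi_a^{\dv}(m_k)\) and apply Lemma~\ref{lem:slide} with base point \(m_k\) and the rescaled test function \(\eta(t)=e^{2\pi i NK(c+at)}\xi(t)\), whose \(\cC^{q}\)-norm is bounded by a constant \(C_N\) depending only on \(N,q,\delta\); rescaling in that lemma gives
\[
    \abs{\ell_{\xi,m}(\dv^{j}h)-\ell_{\eta,m_k}(\dv^{j}h)}
    \le C\epsilon\,\norm{\dv^{j}h}_{1,q-1}
    \le C'\epsilon\,\norm{h}_{p',q'},
\]
for all \(0\le j\le p\), where the last inequality uses \(\norm{\dv^{j}h}_{1,q-1}\le\norm{h}_{j+1,q-1}\lesssim\norm{h}_{p',q'}\) — valid precisely because \(j+1\le p'\) (as \(p<p'\)) and \(q'\le q-1\) (as \(q'<q\)). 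Hence, up to error \(C'\epsilon\norm{h}_{p',q'}\), one need only control \(\ell_{\eta,m_k}(\dv^{j}h)\) with \(m_k\) in the finite set of centres and \(\eta\) in the family \(\mathcal E\subseteq\cS(I_\delta)\) of functions produced above, all supported in \(I_\delta\) with \(\cC^{q}\)-norm \(\le C_N\).

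Next I would discretise the test function and conclude. The set of \(\cC^{q}\) functions supported in \(\overline{I_\delta}\) with \(\cC^{q}\)-norm \(\le C_N\) is precompact in \(\cC^{q-1}\), hence in \(\cC^{q'}\) since \(q'\le q-1\), by Arzelà–Ascoli; so \(\mathcal E\) has a finite \(\epsilon\)-net \(\eta_1,\dots,\eta_L\in\mathcal E\) for \(\norm{\cdot}_{\cC^{q'}}\). Picking, for each \(\eta\in\mathcal E\), an \(\eta_i\) with \(\norm{\eta-\eta_i}_{\cC^{q'}}\le\epsilon\), and noting \(\eta-\eta_i\in\cS(I_\delta)\), the definition of the \((0,q')\)-seminorm and rescaling give \(\abs{\ell_{\eta,m_k}(\dv^{j}h)-\ell_{\eta_i,m_k}(\dv^{j}h)}\le\norm{\eta-\eta_i}_{\cC^{q'}}\,\abs{\dv^{j}h}_{0,q'}\le\epsilon\,C''\norm{h}_{p',q'}\), using \(j\le p\le p'\). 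Combining the two reductions, there is a finite index set \(A\) and functionals \(L_\alpha(h)=\ell_{\eta_{i(\alpha)},m_{k(\alpha)}}(\dv^{j(\alpha)}h)\), each bounded on \(\cB_{N}^{p',q'}\), with
\[
    \norm{h}_{p,q}\le\max_{\alpha\in A}\abs{L_\alpha(h)}+C\epsilon\,\norm{h}_{p',q'}
    \qquad\text{for all }h\in\cB_{N}^{p',q'}.
\]
Applying this with \(\epsilon=1/\ell\) to get finite families \(A_\ell\), then given \((h_n)\subseteq B\): for each \(\ell\) the map \(h\mapsto(L_\alpha(h))_{\alpha\in A_\ell}\) sends \((h_n)\) into a bounded subset of \(\bC^{\abs{A_\ell}}\), hence has a convergent subsequence; extracting nested subsequences and diagonalising yields \((h_{n_s})\) with \((L_\alpha(h_{n_s}))_s\) convergent for every \(\ell,\alpha\). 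Then for any target \(\epsilon\), choosing \(\ell>1/\epsilon\) and \(s,s'\) large, \(\norm{h_{n_s}-h_{n_{s'}}}_{p,q}\le\max_{\alpha\in A_\ell}\abs{L_\alpha(h_{n_s}-h_{n_{s'}})}+2C/\ell\le(1+2C)\epsilon\), so \((h_{n_s})\) is \(\norm{\cdot}_{p,q}\)-Cauchy and converges by completeness. Thus \(B\) is relatively compact in \(\cB_{N}^{p,q}\), i.e.\ the inclusion is compact.

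I expect the main obstacle to be the first reduction: organising the collapse of the base point \(m\) onto a finite net through Lemma~\ref{lem:slide} while carrying the phase factor \(e^{2\pi i NK(c+at)}\) that multiplies the test function (it keeps \(\cC^{q}\)-norms bounded but not normalised), and verifying that every error term is genuinely absorbed by \(\norm{\cdot}_{p',q'}\) — which is exactly where the strict inequalities \(p<p'\) and \(q'<q\) enter. The remaining ingredients, Arzelà–Ascoli and the diagonal argument, are routine.
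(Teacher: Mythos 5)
Your proof is correct and follows essentially the same route as the paper's: discretise the base point via Lemma~\ref{lem:slide} (absorbing the error into the extra \(\dv\)-derivative available because \(p<p'\)), discretise the test function via the compact embedding \(\cC^{q}\hookrightarrow\cC^{q-1}\) (absorbing the error because \(q'<q\)), and conclude by a diagonal argument. The only cosmetic differences are that the paper first reduces to \((p',q')=(p+1,q-1)\) via the continuous inclusions~\eqref{eq:continuous}, and cites \cite[Proof of Prop.\,2.8]{FGL19} for the diagonal extraction, which you instead write out explicitly.
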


\begin{proof}
    Since there is a continuous embedding in \eqref{eq:continuous}, it suffices to prove the result for the case \(p' = p+1\), \(q'=q-1\).
    By density it suffices to work with \(h\in \cC^{\infty}_{N}\).
    We will show that, for each sufficiently small \(\epsilon>0\) there exists a finite set \({\{\rho_k\}}_{k}\) of linear functionals on \(\cB_{N}^{p+1,q-1}\) such that, for any \(h\in \cC^{\infty}_{N}\),
    \begin{equation}
        \label{eq:compact-bound}
        \norm{h}_{p,q} \leq \max_{k}\abs{\rho_k(h)} + \epsilon \norm{h}_{p+1,q-1}.
    \end{equation}
    By a diagonal argument, this implies the claimed compactness (see, e.g., \cite[Proof of Prop.\,2.8]{FGL19}).

    Fix \(0<\epsilon < \delta\).
    Let \({\{m_k\}}_{k}\) denote a finite set of points in \(M\) such that every point of \(M\) is \(\epsilon\)-close to at least one of the \(m_k\).
    According to Lemma~\ref{lem:slide}, for all \(m \in M\) there exists an index \(k\) such that, for all \(h\in \cC^\infty_N\), \(\eta \in \cS(I_\delta)\), \(\norm{\eta}_{\cC^q} \leq 1\),
    \begin{equation}
        \label{eq:compact1}
        \abs{\ell_{\eta,m}(h) - \ell_{\tilde{\eta},m_k}(h)} \lesssim \epsilon \norm{h}_{1,q-1},
    \end{equation}
    where \(\tilde{\eta}(t) = e^{- 2\pi i N (c + at)} \eta(t)\).

    Let \(C = \sup_{0 \leq a \leq \delta}\norm{\smash{t\mapsto e^{- 2\pi i N at}}}_{\cC^q}\).
    Using the compactness of \(\cC^{q}\) in \(\cC^{q-1}\), we choose a finite set \({\{\zeta_i\}}_{i}\) of functions in \(\cS(I_\delta)\) such that, for every \(\eta \in \cS(I_\delta)\), \(\norm{\eta}_{\cC^q} \leq C\), there exists some \(\zeta_i\) in the set such that \(\norm{\eta - \zeta_i}_{\cC^{q-1}}\leq \epsilon\).
    Since
    \[
        \begin{aligned}
            \abs{\ell_{\eta,m}(h) - \ell_{\zeta_i,m}(h)}
             & = \abs{\int \left[ \eta(t) - \zeta_i(t) \right] \cdot h\circ \varphi_{t}^{\dw}(m) \ dt  } \\
             & \leq  \norm{\eta - \zeta_i}_{\cC^{q-1}} \norm{h}_{0,q-1}
        \end{aligned}
    \]
    we know that, for every \(\eta \in \cS(I_\delta)\), \(\norm{\eta}_{\cC^q} \leq C\) there exists an index \(i\) such that, for every \(m \in M\),
    \begin{equation}
        \label{eq:compact2}
        \abs{\ell_{\tilde{\eta},m}(h) - \ell_{\zeta_i,m}(h)} \lesssim \epsilon \norm{h}_{0,q-1}.
    \end{equation}

    Combining the above estimates, \eqref{eq:compact1}, \eqref{eq:compact2}, we have shown that, for every \(m \in M\), \(\eta \in \cS(I_\delta)\), and \(\norm{\eta}_{\cC^q} \leq C\), there exist indexes \(i,k\) such that, for all  $0\leq j \leq p$,
    \[
        \abs{\ell_{\eta,m}(\dv^j h) - \ell_{\zeta_i,m_k}(\dv^j h)} \lesssim \epsilon \norm{\dv^j h}_{1,q-1} \leq  \epsilon \norm{h}_{p+1,q-1}.
    \]
    As such, the set of linear functionals defined as \(h \mapsto \ell_{\zeta_i,m_k}(\dv^j h)\) and indexed by \(i,j,k\), satisfy the required property~\eqref{eq:compact-bound}.
\end{proof}

\begin{lemma}
    \label{lem:cont-X}
    For any \(p,q \in \bN\), 
    \(\dv:\cC^{\infty}_N \rightarrow \cC^{\infty}_N\) extends to a continuous operator \(\cB_{N}^{p,q} \to \cB_{N}^{p-1,q}.\)
\end{lemma}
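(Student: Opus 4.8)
The plan is to show that $\dv$ is, essentially by construction of the norms, a norm-decreasing map modulo the obvious shift of the regularity index $p$, and then to extend by density. There is no genuine obstacle here; the only points requiring care are checking that $\dv$ preserves $\cC^\infty_N$ and the bookkeeping of indices in the supremum defining the norm.

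First I would verify that $\dv$ maps $\cC^\infty_N$ into itself. This uses the commutation relation $[\dv,\dz]=0$ from \eqref{eq:comm}: for $h\in\cC^\infty_N$ one has $\dz(\dv h)=\dv(\dz h)=\dv(2\pi i NK\,h)=2\pi i NK\,(\dv h)$, so indeed $\dv h\in\cC^\infty_N$, and of course $\dv h\in\cC^\infty(M)$.

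Next comes the key estimate. Unwinding the definition of the norm applied to $\dv h$,
\[
    \norm{\dv h}_{p-1,q}=\sup\left\{\abs{\ell_{\eta,m}\bigl(\dv^{j}(\dv h)\bigr)} : 0\leq j\leq p-1,\ m\in M,\ \eta\in\cS(I_\delta),\ \norm{\eta}_{\cC^q}\leq 1\right\}.
\]
Since $\dv^{j}(\dv h)=\dv^{j+1}h$ and $\{j+1 : 0\leq j\leq p-1\}=\{1,\dots,p\}\subseteq\{0,1,\dots,p\}$, the functionals appearing on the right-hand side form a subcollection of those defining $\norm{h}_{p,q}$. Hence $\norm{\dv h}_{p-1,q}\leq\norm{h}_{p,q}$ for every $h\in\cC^\infty_N$.

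Finally, since $\cC^\infty_N$ is dense in $\cB_N^{p,q}$ by Definition~\ref{def:Bpq} and $\cB_N^{p-1,q}$ is a Banach space, the bounded linear map $\dv:(\cC^\infty_N,\norm{\cdot}_{p,q})\to\cB_N^{p-1,q}$ extends uniquely to a bounded linear operator $\cB_N^{p,q}\to\cB_N^{p-1,q}$ (with operator norm at most $1$), which is what is claimed.
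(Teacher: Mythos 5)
Your proposal is correct and follows exactly the paper's own argument: verify $\dv$ preserves $\cC^\infty_N$ via $[\dv,\dz]=0$, observe that the seminorms defining $\norm{\dv h}_{p-1,q}$ are a subcollection of those defining $\norm{h}_{p,q}$ after reindexing $j\mapsto j+1$, and conclude by density. The extra explicitness you offer (spelling out the commutation computation and the index bookkeeping) is accurate but does not change the substance.
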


\begin{proof}
    Let \(h \in \cC^{\infty}_N\).
    Since \([\dv,\dz]=0\), we know that \(\dv h \in \cC^{\infty}_N\).
    Further observe that
    \[
        \begin{aligned}
            \norm{\dv h}_{p-1,q}
             & = \sup
            \left\{
            \abs{\ell_{\eta,m}(\dv^j (\dv h))} :
            0 \leq j\leq p-1, m\in M,  \norm{\eta}_{\cC^{q}}\leq 1
            \right\}                \\
             & = \sup
            \left\{
            \abs{\ell_{\eta,m}(\dv^j  h)} :
            1 \leq j\leq p, m\in M,  \norm{\eta}_{\cC^{q}}\leq 1
            \right\}                \\
             & \leq \norm{h}_{p,q}.
        \end{aligned}
    \]
    By density the full result follows.
\end{proof}

\begin{lemma}
    \label{lem:cont-Y}
    For any \(p \in \bN_0\), \(q \in \bN\),
    \(\dw : \cC^{\infty}_N \rightarrow \cC^{\infty}_N\) extends to a continuous operator \(\cB_{N}^{p,q} \to \cB_{N}^{p,q+1}\).

\end{lemma}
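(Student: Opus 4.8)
The plan is to exploit the single identity $\frac{d}{dt}\bigl[h\circ\varphi_t^\dw(m)\bigr] = (\dw h)\circ\varphi_t^\dw(m)$, which lets us trade a $\dw$-derivative on $h$ for an ordinary derivative on the test function $\eta$ by integration by parts. Since every $\eta\in\cS(I_\delta)$ is supported compactly inside $I_\delta$, the boundary terms vanish and $\ell_{\eta,m}(\dw h) = -\ell_{\eta',m}(h)$. This is exactly the ``gain one derivative'' mechanism, and it explains why the target is $\cB_N^{p,q+1}$: producing $\eta'$ costs one order of regularity of the test function, while leaving the $\dv$-regularity index $p$ untouched.

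First I would record the commutation rule needed to move powers of $\dv$ past $\dw$. From $[\dv,\dw]=\dz$ together with the centrality of $\dz$ \eqref{eq:comm}, an immediate induction gives $\dv^{j}\dw = \dw\,\dv^{j} + j\,\dz\,\dv^{j-1}$. On $\cC_N^\infty$ the central element acts as the scalar $2\pi i NK$, and because $[\dv,\dz]=[\dw,\dz]=0$ every intermediate function stays in $\cC_N^\infty$ (in particular $\dw h\in\cC_N^\infty$ when $h\in\cC_N^\infty$, so $\dw$ really does map $\cC_N^\infty$ to itself). Hence $\dv^{j}(\dw h) = \dw(\dv^{j}h) + 2\pi i NK\, j\, \dv^{j-1}h$, where the last term is present only for $j\geq 1$.

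Next, applying the integration-by-parts identity to the first summand, $\ell_{\eta,m}(\dv^{j}(\dw h)) = -\ell_{\eta',m}(\dv^{j}h) + 2\pi i NK\, j\,\ell_{\eta,m}(\dv^{j-1}h)$. Now I would use the precise shape of the $\cC^r$-norm, $\norm{f}_{\cC^{r}}=\sup_{k\leq r}2^{r-k}\abs{f^{(k)}}_\infty$: if $\norm{\eta}_{\cC^{q+1}}\leq 1$ then reindexing shows $\norm{\eta'}_{\cC^{q}}\leq\norm{\eta}_{\cC^{q+1}}\leq 1$, and likewise $\norm{\eta}_{\cC^{q}}\leq\tfrac12\norm{\eta}_{\cC^{q+1}}\leq 1$. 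Therefore, for $0\leq j\leq p$, both $\abs{\ell_{\eta',m}(\dv^{j}h)}$ and (when $j\geq1$) $\abs{\ell_{\eta,m}(\dv^{j-1}h)}$ are bounded by $\norm{h}_{p,q}$, giving $\abs{\ell_{\eta,m}(\dv^{j}(\dw h))}\leq(1+2\pi\abs{N}Kp)\,\norm{h}_{p,q}$. Taking the supremum over admissible $m$, $\eta$ and $0\leq j\leq p$ yields $\norm{\dw h}_{p,q+1}\leq(1+2\pi\abs{N}Kp)\,\norm{h}_{p,q}$, and density of $\cC_N^\infty$ in $\cB_N^{p,q}$ then gives the desired bounded extension.

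There is no serious obstacle here. The only points needing care are confirming that all functions produced by the commutator manipulations remain in $\cC_N^\infty$ (so that $\dz$ can legitimately be replaced by the scalar $2\pi iNK$), and the small bookkeeping with the weighted $\cC^r$-norm that makes ``differentiate $\eta$ once'' get absorbed by passing from $\cC^{q+1}$ to $\cC^{q}$. If one preferred to avoid the algebra, one could instead run the argument at the level of flows, using $\varphi_t^\dw\circ\varphi_a^\dv = \varphi_a^\dv\circ\varphi_t^\dw\circ\varphi_{at}^\dz$ exactly as in the proof of Lemma~\ref{lem:slide}, but the computation above is shorter.
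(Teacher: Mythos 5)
Your proof is correct and follows essentially the same route as the paper's: commute $\dv^{j}$ past $\dw$ via $\dv^{j}\dw = \dw\dv^{j} + j\,\dz\,\dv^{j-1}$, integrate by parts to trade $\dw$ for $\eta'$, and use the fact that $\dz$ acts as the scalar $2\pi i NK$ on $\cC_N^\infty$. Your treatment of the weighted $\cC^r$-norm bookkeeping (checking $\norm{\eta'}_{\cC^{q}}\leq\norm{\eta}_{\cC^{q+1}}$ and $\norm{\eta}_{\cC^{q}}\leq\tfrac12\norm{\eta}_{\cC^{q+1}}$) is in fact slightly more explicit than the paper's, and you arrive at the same bound $\norm{\dw h}_{p,q+1}\leq(2\pi K|N|p+1)\norm{h}_{p,q}$.
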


\begin{proof}
    Let $h\in \cC_N^\infty$.
    We must estimate \(\norm{\dw h}_{p,q+1}\).
    As such, let $\eta \in \cS(I_{\delta})$, $\norm{\eta}_{\cC^{q+1}} \leq 1$ and let \(1 \leq j \leq p\).
    The commutation relations of $\dv,\dw$ imply \(\dv \dw = \dw \dv + \dz\) so that \(\dv^j \dw = \dw \dv^j + j \dv^{j-1} \dz\). 

    Rearranging and integrating by parts,
    \begin{align*}
         & \int \eta(t) \cdot (\dv^j\dw h)\circ\varphi_t^\dw(m) \ dt                                                                \\
         & \ \ = \int \eta(t)\cdot  \dw(\dv^j h)\circ\varphi_t^\dw(m)\ dt + j\int \eta(t)\cdot \dv^{j-1}\dz h\circ \varphi_t^\dw(m) \ dt  \\
         & \ \ = - \int \eta'(t) \cdot \dv^j h \circ \varphi_t^\dw(m)\ dt + j\int \eta(t)\cdot \dv^{j-1}\dz h \circ \varphi_t^\dw(m) \ dt.
    \end{align*}
    Observe that \(\norm{\eta'}_{\cC^{q}} \leq 1\).
    This all means that,
    \begin{equation*}
        \left|\int \eta(t) \cdot (\dv^j \dw h)\circ\varphi_t^\dw(m) \ dt\right| \leq \norm{\dv^j h}_{0,q} + j \norm{\dv^{j-1}(\dz h)}_{0,q+1}.
    \end{equation*}
    And so, using also the definition of $\cC_N^\infty$, we have shown that 
    \[
        \norm{\dw h}_{p,q+1} \leq (2\pi K |N|p + 1)\norm{h}_{p,q}. \qedhere
    \]
\end{proof}

\begin{lemma}
    \label{lem:ker-W}
    Let \(p \in \bN_0\), \(q \in \bN\).
    Suppose that \(N \neq 0\), \(h\in \cB^{p,q}_{N}\) and \(\dw h = 0\).
    Then \(h = 0\).
\end{lemma}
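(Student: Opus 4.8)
\textbf{Proof plan for Lemma~\ref{lem:ker-W}.}

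The plan is to combine the Fourier decomposition in the $\dz$-direction (which is built into the definition of $\cC^\infty_N$) with the fact that $\dv$, $\dw$ and $\dz$ together span the tangent space, using the commutation relation $[\dv,\dw] = \dz$. First I would observe that, via the inclusion $\iota : \cB^{p,q}_N \to \cD^q_N$ of Lemma~\ref{lem:distributions}, it suffices to show that the distribution $\iota h$ vanishes; injectivity of $\iota$ then gives $h = 0$. So the real task is to rule out the existence of a nonzero distribution $\cD$ of finite order supported on $\cC^\infty_{-N}$ (equivalently, a distribution on $M$ lying in the $N$-th Fourier mode in the $\dz$-direction) which is annihilated by $\dw$, when $N \neq 0$.

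The key step is a commutator computation at the level of distributions. Since $\dw\, h = 0$ (in $\cB^{p-1,q+1}_N$, hence as a distribution by Lemma~\ref{lem:cont-Y}), we get $\dv \dw\, h = 0$ and $\dw \dv\, h = 0$, so applying $[\dv,\dw] = \dz$ yields $\dz\, h = \dv \dw\, h - \dw \dv\, h = 0$ as a distribution. But $h \in \cC^\infty_N$-completion means $\dz\, h = 2\pi i N K\, h$ in the distributional sense (this identity passes to $\cB^{p,q}_N$ because it holds on the dense subspace $\cC^\infty_N$ and both sides are continuous into a space of distributions). Since $N \neq 0$ the constant $2\pi i N K$ is nonzero, so $\dz\, h = 0$ forces $h = 0$. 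I would need to be slightly careful that all these operator identities are valid on $\cB^{p,q}_N$ and not merely on $\cC^\infty_N$: $\dv$ maps $\cB^{p,q}_N \to \cB^{p-1,q}_N$ (Lemma~\ref{lem:cont-X}), $\dw$ maps $\cB^{p,q}_N \to \cB^{p,q+1}_N$ (Lemma~\ref{lem:cont-Y}), and the further applications of $\dv$ lower the first index by at most one more, so everything lands in, say, $\cB^{p-2,q+2}_N$ (requiring $p \geq 2$; for small $p$ one first embeds into a space with larger $p$ using that $h \in \cC^\infty_N$ is dense, or argues directly with the distribution $\iota h \in \cD^q_N$ on which $\dv$, $\dw$, $\dz$ act in the usual distributional sense regardless of $p$). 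Working entirely on the distributional side via $\iota$ is cleanest: $\iota h \in \cD^q_N$, the operators $\dv,\dw,\dz$ act on $\cD^q_N$ (raising the order), $\dw(\iota h) = 0$ by hypothesis and continuity of $\iota$, hence $\dz(\iota h) = [\dv,\dw](\iota h) = 0$, while $\dz(\iota h) = 2\pi i N K\, \iota h$ by definition of the support condition; thus $\iota h = 0$ and then $h = 0$ by injectivity of $\iota$.

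I expect the main obstacle to be purely bookkeeping: making sure the identity $\dz(\iota h) = 2\pi i N K\,\iota h$ is stated and used correctly on the completion. On $\cC^\infty_N$ this is the definition \eqref{eq:def-CN}, and for general $h \in \cB^{p,q}_N$ one checks it by testing against $g \in \cC^\infty_{-N}$: $\langle \dz(\iota h), g\rangle = -\langle \iota h, \dz g\rangle = -\langle \iota h, -2\pi i N K\, g \rangle = 2\pi i N K \langle \iota h, g\rangle$, using that $g \in \cC^\infty_{-N}$ means $\dz g = -2\pi i N K\, g$ and that $\dz$ is (formally) skew-adjoint with respect to the Haar measure $\nu$ (which is $\varphi^\dz_t$-invariant). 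This is a one-line computation and there is no genuine analytic difficulty once the distributional framework of Lemma~\ref{lem:distributions} is in place; the hypothesis $N \neq 0$ enters exactly and only to invert the scalar $2\pi i N K$.
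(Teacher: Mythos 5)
The central step of your proposal is wrong. You claim that $\dw h = 0$ gives both $\dv\dw h = 0$ and $\dw\dv h = 0$, whence $\dz h = [\dv,\dw]h = 0$. The first vanishing is trivial, but the second does not follow: $\dv$ and $\dw$ do not commute, and that is exactly the content of $[\dv,\dw] = \dz \neq 0$. Plugging $\dw h = 0$ into the commutator identity gives
\[
  \dz h = \dv\dw h - \dw\dv h = -\dw(\dv h),
\]
and combining with $\dz h = 2\pi i N K\, h$ yields the consistency relation $\dw(\dv h) = -2\pi i N K\, h$, not a contradiction. A useful sanity check: the identical algebra, applied to $h$ with $\dv h = 0$, would equally ``prove'' $h = 0$, yet the paper shows (Lemma~\ref{lem:dim-IN}) that $\ker_N(\dv)$ has dimension $K\abs{N} > 0$. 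So the commutation relations alone cannot possibly distinguish the two kernels, and the algebraic route collapses.

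The paper's proof is dynamical, not algebraic. After reducing to smooth $h$, the vanishing of $\ell_{\eta,m}(\dw h)$ for all $\eta,m$ says $h$ is constant along every $\dw$-leaf; minimality of the $\dw$-flow (established at the start of Section~\ref{sec:anisotropic} via the projection $\pi : M \to \bT^2$, since $\Phi$ projects to a hyperbolic toral automorphism whose unstable direction has irrational slope) makes each leaf dense; continuity then forces $h$ to be globally constant, which for $N \neq 0$ contradicts the prescribed oscillation in the $\dz$-direction built into $\cC^\infty_N$. This density-of-leaves input is precisely what your proposal is missing, and it is the asymmetry between $\dv$ and $\dw$ in the anisotropic norm (distributional behaviour along $\dw$, $\cC^r$ behaviour along $\dv$) that makes the conclusion true for $\dw$ and false for $\dv$.
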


\begin{proof}
    Since the kernel of a bounded linear operator is closed, it suffices to work with \(h\in \cC_N^\infty\).
    By assumption $\ell_{\eta,m}(\dw h)=0$ for all \(\eta\in \cS(I_{\delta})\), \(m\in M\).
    This implies that \(h\) is constant along each leaf of the \(\dw\)-foliation. 
    Such leaves are dense in \(M\) (as discussed at the start of this section).
    Since \(h\) is continuous and equal to a constant function on a dense set, it is constant on \(M\).
    However, since \(N\neq 0\), this contradicts the oscillating behaviour of \(h\) in the \(\dz\)-direction.
\end{proof}

A key characteristic of the present anisotropic setting is that Lemma~\ref{lem:ker-W} doesn't hold when \(\dw\) is replaced with \(\dv\).
Indeed, as we will see in Section~\ref{sec:inv-dist}, the set \(\{h \in \cB^{p,q}_{N} : \dv h = 0\}\) is non-empty.
Moreover this set consists of the eigenspaces associated to the peripheral spectrum.

\section{Composition operator and norm estimates}
\label{sec:norm-est}

This section is devoted to proving several key properties of the composition operator in relation to the anisotropic norms, in particular that the composition operator is quasi-compact.
We consider the linear operator \(\cL : \cC^\infty \to \cC^\infty\), given by
\begin{equation}
    \label{eq:def-L}
    \cL : h \mapsto h \circ \Phi
\end{equation}
and which we call the \emph{composition operator}. 

\begin{remark}
    \label{rem:transfer-op}
    The above introduced operator is also known as the Koopman operator and the dual operator is commonly known as the transfer operator. 
    In the present affine and invertible setting, the invariant measure is normalised volume and the transfer operator would be, up to some constant factor, equal to \(h \mapsto h \circ \Phi^{-1}\). 
    Consequently the spectrum of this operator can be obtained by studying the operator of the present article and swapping the role of \(\dv\) and \(\dw\) in the definition of the anisotropic space.
    For a general dynamical system, the transfer operator corresponding to the measure of maximal entropy or the one corresponding to the SRB measure will include a different weight in the definition and consequently would be expected to have a different spectrum.
    However, again because of the present affine setting, in each case the invariant measure is normalised volume and the corresponding transfer operators are equal up to a scaling constant. 
    The present choice to study the composition operator matches what was done by Faure, Gouëzel and Lanneau in their work on pseudo-Anosov maps~\cite{FGL19}.
\end{remark}

\begin{remark}
    Once we fix the anisotropic space \(\cB^{p,q}_{N}\) the spectrum of \(h \mapsto h \circ \Phi\) has minimal connection to the spectrum of \(h \mapsto h \circ \Phi^{-1}\), indeed we will obtain a precise description of the former whilst the latter will fail to be a contraction on these Banach spaces.
    That the inverse fails to be a contraction can be seen by inspecting the proofs of Lemma~\ref{lem:LY-first} and Lemma~\ref{lem:LY-other}.
    The norms behave well under the operator when expansion and contraction matches the suited directions of the anisotropic space but this means the directions can't be swapped.
\end{remark}

\begin{lemma}
    \label{lem:XL-LX}
    For all \(j,k \in \bN\) and \(h\in \cC^{\infty}_{N}\), the following relations hold:
    \[
        \dv^{j}\cL^k h = \lambda^{-jk} \cL^k\left(\dv^{j}h\right),
        \quad  
        \dw^{j}\cL^k h = \lambda^{jk} \cL^k\left(\dw^{j}h\right).
    \]
\end{lemma}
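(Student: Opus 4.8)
The plan is to derive both commutation relations directly from the defining property~\eqref{eq:part-hyp} of the automorphism. Recall that $\Phi_* \dv = \lambda^{-1}\dv$ and $\Phi_* \dw = \lambda \dw$, where $\Phi_*$ is the pushforward on tangent vectors (equivalently, the derivative $D\Phi$ acting on the invariant sub-bundles). The key elementary fact is the chain rule for directional derivatives: for a smooth function $h$ and a smooth map $\Phi$, one has $\dv(h\circ\Phi) = \big((\Phi_*\dv)h\big)\circ\Phi$, where on the right $\Phi_*\dv$ denotes the vector field obtained by pushing forward. Since $\Phi_*\dv = \lambda^{-1}\dv$, this immediately gives $\dv(\cL h) = \dv(h\circ\Phi) = (\lambda^{-1}\dv h)\circ\Phi = \lambda^{-1}\cL(\dv h)$, i.e.\ the case $j=k=1$ of the first relation; the second relation for $j=k=1$ follows identically using $\Phi_*\dw = \lambda\dw$.

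Next I would promote this to general $j$ and $k$ by two straightforward inductions. First iterate in $j$: assuming $\dv^{j}\cL h = \lambda^{-j}\cL(\dv^{j}h)$, apply $\dv$ to both sides and use the $j=1$ relation once more to get $\dv^{j+1}\cL h = \lambda^{-j}\dv\big(\cL(\dv^{j}h)\big) = \lambda^{-j}\cdot\lambda^{-1}\cL(\dv^{j+1}h) = \lambda^{-(j+1)}\cL(\dv^{j+1}h)$. This establishes $\dv^{j}\cL h = \lambda^{-j}\cL(\dv^{j}h)$ for all $j$. Then iterate in $k$: write $\cL^{k+1}h = \cL^{k}(\cL h)$, apply $\dv^{j}$, use the single-$\cL$ identity and the inductive hypothesis $\dv^{j}\cL^{k}g = \lambda^{-jk}\cL^{k}(\dv^{j}g)$ with $g = \cL h$, obtaining $\dv^{j}\cL^{k+1}h = \lambda^{-jk}\cL^{k}(\dv^{j}\cL h) = \lambda^{-jk}\cL^{k}\big(\lambda^{-j}\cL(\dv^{j}h)\big) = \lambda^{-j(k+1)}\cL^{k+1}(\dv^{j}h)$. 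The argument for $\dw$ is word-for-word the same with $\lambda^{-1}$ replaced by $\lambda$. One should also note in passing that $\dv^{j}h, \dw^{j}h \in \cC^{\infty}_{N}$ whenever $h\in\cC^{\infty}_{N}$, which follows from the commutation relations $[\dv,\dz]=[\dw,\dz]=0$ in~\eqref{eq:comm} (so that applying $\dv$ or $\dw$ preserves the eigenvalue condition $\dz h = 2\pi i NK h$) — this is what makes all the expressions above well-defined members of $\cC^{\infty}_{N}$.

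I do not expect any genuine obstacle here; this is a bookkeeping lemma. The only point requiring a modicum of care is justifying the chain-rule identity $\dv(h\circ\Phi) = \big((\Phi_*\dv)h\big)\circ\Phi$ in the form stated, i.e.\ making sure the notation $\Phi_*\dv = \lambda^{-1}\dv$ is interpreted as an identity of vector fields on $M$ (valid because $\Phi$ is an \emph{automorphism}, hence its derivative is constant in suitable coordinates and maps the left-invariant field $\dv$ to $\lambda^{-1}$ times the left-invariant field $\dv$ everywhere). Once that is granted, everything reduces to the two nested inductions above. If one prefers a more hands-on route, the identity can also be checked in the explicit coordinates $(x,y,z)$: the automorphism acts linearly on the abelianization and the relations~\eqref{eq:part-hyp} translate into explicit scalings under which the computation $\partial_{\dv}(h\circ\Phi) = \lambda^{-1}(\partial_{\dv}h)\circ\Phi$ is immediate, but the invariant formulation is cleaner and I would present that.
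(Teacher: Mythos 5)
Your proposal is correct and follows essentially the same route as the paper: both rest on the pushforward/chain-rule identity $\dv(h\circ\Phi) = ((\Phi_*\dv)h)\circ\Phi$ combined with the scaling $\Phi_*\dv = \lambda^{-1}\dv$, $\Phi_*\dw = \lambda\dw$ from~\eqref{eq:part-hyp}, then iterate. The paper computes $\dv(h\circ\Phi^k) = \lambda^{-k}(\dv h)\circ\Phi^k$ directly (handling all $k$ at once) and then simply says ``iterating this leads to the full result'' for general $j$, whereas you spell out the two nested inductions explicitly and also record the (useful, though implicit in the paper) observation that $\dv^j h,\dw^j h\in\cC^\infty_N$ because $[\dv,\dz]=[\dw,\dz]=0$. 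Your write-up is a bit more verbose but there is no substantive difference in method.
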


\begin{proof}
    By \eqref{eq:part-hyp},
    \[
        \dv\cL^k h = \dv (h \circ \Phi^{k}) = (\dv h\circ \Phi^{k})\lambda^{-k} =  \lambda^{-k} \cL\left(\dv h\right).
    \]
    Iterating this leads to the full result.
    We repeat similarly for \(\dw\).
\end{proof}

Since \(\dv\), \(\dw\) and \(\cL\) extend continuously to $\cB^{p,q}_{N}$, the above result also extends to $\cB^{p,q}_{N}$, a fact which will soon be useful.

For convenience, for all \(j\in \bN_0\), \(q\in\bN\), we define the semi-norm on \(\cC_N^{\infty}\),
\begin{equation}
    \label{eq:def-semi}
    \abs{h}_{j,q}= \sup
    \left\{ \abs{\ell_{\eta,m}(\dv^j h)} : m\in M, \eta \in \cS(I_{\delta}),  \norm{\eta}_{\cC^{q}}\leq 1\right\}.
\end{equation}
This definition has the consequence that \(\norm{h}_{p,q}= \displaystyle\max_{0\leq j \leq p} \abs{h}_{j,q}\).

\begin{lemma}
    \label{lem:LY-first}
    There exists \(C>0\) such that, for all \(k \in \bN_0\), \(q \in \bN\), and \(h\in \cC^{\infty}_{N}\),
    \[
        \snorm{\smash{\cL^k h}}_{0,q} \leq C \snorm{h}_{0,q}.
    \]
\end{lemma}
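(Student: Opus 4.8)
The plan is to reduce the estimate to Lemma~\ref{lem:norm-scale} by means of the renormalization identity~\eqref{eq:renorm}. Fix \(k\in\bN_0\), \(q\in\bN\), \(h\in\cC^\infty_N\), a point \(m\in M\) and a test function \(\eta\in\cS(I_\delta)\) with \(\norm{\eta}_{\cC^q}\le 1\); it suffices to bound \(\abs{\ell_{\eta,m}(\cL^k h)}\) by a fixed multiple of \(\snorm{h}_{0,q}\), uniformly in all of these data.

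First I would rewrite the functional. Since \(\cL^k h=h\circ\Phi^k\) and, iterating~\eqref{eq:renorm}, \(\Phi^k\circ\varphi_t^{\dw}=\varphi_{\lambda^k t}^{\dw}\circ\Phi^k\), the change of variables \(s=\lambda^k t\) gives
\[
    \ell_{\eta,m}(\cL^k h)
    =\int_{-\delta}^{\delta}\eta(t)\cdot h\circ\varphi_{\lambda^k t}^{\dw}\circ\Phi^{k}(m)\,dt
    =\int_{-\lambda^k\delta}^{\lambda^k\delta}\bar\eta(s)\cdot h\circ\varphi_{s}^{\dw}\circ\Phi^{k}(m)\,ds,
\]
where \(\bar\eta(s)=\lambda^{-k}\eta(\lambda^{-k}s)\). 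The function \(\bar\eta\) is smooth and supported in an interval \(A\) of length \(2\lambda^k\delta\ge 2\delta\) (using \(k\ge 0\) and \(\lambda>1\)), and since \(\bar\eta^{(n)}(s)=\lambda^{-k(n+1)}\eta^{(n)}(\lambda^{-k}s)\) and \(\lambda>1\), one obtains \(\norm{\bar\eta}_{\cC^q}\le\lambda^{-k}\norm{\eta}_{\cC^q}\le\lambda^{-k}\).

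Then I would apply Lemma~\ref{lem:norm-scale} to the pair \((\bar\eta,h)\) over the interval \(A\), which yields a constant \(C\), depending only on \(q\), such that
\[
    \abs{\ell_{\eta,m}(\cL^k h)}
    \le C\,\abs{A}\,\norm{\bar\eta}_{\cC^q}\,\snorm{h}_{0,q}
    \le C\cdot 2\lambda^k\delta\cdot\lambda^{-k}\,\snorm{h}_{0,q}
    =2C\delta\,\snorm{h}_{0,q}.
\]
Taking the supremum over \(m\in M\) and over \(\eta\in\cS(I_\delta)\) with \(\norm{\eta}_{\cC^q}\le 1\) gives \(\snorm{\cL^k h}_{0,q}\le 2C\delta\,\snorm{h}_{0,q}\), as claimed.

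The substance of the argument is the exact cancellation in the last display: renormalizing stretches the interval of integration by \(\lambda^k\), but the accompanying rescaling of the test function contracts its \(\cC^q\) norm by precisely \(\lambda^{-k}\), so the length factor produced by Lemma~\ref{lem:norm-scale} is absorbed and the resulting constant is independent of \(k\). There is no real obstacle here; the only points to verify are that \(\lambda^k\ge 1\) for every \(k\in\bN_0\), so that \(\abs{A}\ge 2\delta\) and Lemma~\ref{lem:norm-scale} applies, and that it is exactly the hyperbolicity \(\lambda>1\) that makes \(\norm{\bar\eta}_{\cC^q}\le\lambda^{-k}\norm{\eta}_{\cC^q}\) (so that the same argument would fail for \(\cL^{-k}\), consistent with the remark preceding the lemma).
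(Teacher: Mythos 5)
Your argument is correct and is essentially the paper's own proof: rewrite $\ell_{\eta,m}(\cL^k h)$ using the renormalization identity $\Phi^k\circ\varphi_t^{\dw}=\varphi_{\lambda^k t}^{\dw}\circ\Phi^k$, change variables $s=\lambda^k t$, and apply Lemma~\ref{lem:norm-scale}, observing that the length factor $2\lambda^k\delta$ cancels against the Jacobian factor $\lambda^{-k}$. The only cosmetic difference is that you absorb the $\lambda^{-k}$ into the rescaled test function $\bar\eta$ (giving $\norm{\bar\eta}_{\cC^q}\le\lambda^{-k}$) whereas the paper keeps it as a separate prefactor and bounds the rescaled test function's $\cC^q$ norm by $1$.
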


\begin{proof}
    Let \(m\in M\) and \(\eta \in \cS(I_{\delta})\) such that \(  \norm{\eta}_{\cC^{q}}\leq 1\).
    For $j =0$, we need to estimate
    \begin{equation}\label{eqn;lLk}
        \ell_{\eta,m}(\cL^k h)
        = \int_{-\delta}^{\delta} \eta(t) \cdot \cL^k h\circ \varphi_{t}^{\dw}(m) \ dt.
    \end{equation}
    In view of \eqref{eq:renorm}, we observe that \(\Phi^{k} \circ \varphi_{t}^{\dw} = \varphi_{\lambda^{k}t}^{\dw} \circ \Phi^{k}\) for any $t \in \bR$. 
    This implies 
    \[
        \cL^k h\circ \varphi_{t}^{\dw}
        = h\circ \Phi^{k} \circ \varphi_{t}^{\dw}
        = h\circ \varphi_{\lambda^{k}t}^{\dw} \circ \Phi^{k}.
    \]
    Changing variables in the integral (\(s= \lambda^k t\)),
    \[
        \ell_{\eta,m}(\cL^k h)
        = \lambda^{-k} \int_{-\lambda^k \delta}^{\lambda^k \delta} \eta(\lambda^{-k} s) \cdot h\circ \varphi_{s}^{\dw}(\Phi^{k}(m)) \ ds.
    \]
    By Lemma~\ref{lem:norm-scale},
    \[
        \ell_{\eta,m}(\cL^k h) \lesssim \lambda^{-k}\cdot(2\lambda^{k}\delta)\cdot\norm{\smash{\eta\circ \lambda^{-k}}}_{\cC^{q}}\snorm{h}_{0,q}\leq 2\delta\snorm{h}_{0,q},
    \]
    and so we have shown that \(\snorm{\smash{\cL^k h}}_{0,q} \lesssim \snorm{h}_{0,q}\).
\end{proof}

\begin{lemma}
    \label{lem:cont-op}
    Let \(p \in \bN_0\), \(q \in \bN\).
    The operator $\mathcal{L}$ extends to a continuous operator on $\cB_{N}^{p,q}$ and has spectral radius at most one. 
    Moreover the spectral radius of \(\cL  : \cB_{0}^{p,q} \to \cB_{0}^{p,q}\) is equal to \(1\).
\end{lemma}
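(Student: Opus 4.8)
The plan is to first establish a bound $\norm{\cL^{k} h}_{p,q}\le C\norm{h}_{p,q}$ valid for all $k\in\bN_{0}$ and all $h\in\cC^{\infty}_{N}$, with $C$ independent of $k$. Taking $k=1$ and using that $\cC^{\infty}_{N}$ is dense in $\cB^{p,q}_{N}$, this gives that $\cL$ extends to a bounded operator on $\cB^{p,q}_{N}$; and since the extension of $\cL^{k}$ is then the $k$-th power of the extension of $\cL$, we get $\norm{\cL^{k}}\le C$ for every $k$, so the spectral radius, being $\lim_{k}\norm{\cL^{k}}^{1/k}$, is at most $\lim_{k}C^{1/k}=1$. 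Note this part of the argument never uses $N\neq 0$, so it applies verbatim on $\cB^{p,q}_{0}$ as well.

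For the uniform bound I would combine Lemma~\ref{lem:XL-LX} with Lemma~\ref{lem:LY-first}. Fix $h\in\cC^{\infty}_{N}$ and $0\le j\le p$. By Lemma~\ref{lem:XL-LX}, $\dv^{j}\cL^{k}h=\lambda^{-jk}\cL^{k}(\dv^{j}h)$, so directly from the definition~\eqref{eq:def-semi} of the semi-norm, and then Lemma~\ref{lem:LY-first} applied to $\dv^{j}h\in\cC^{\infty}_{N}$,
\[
  \abs{\cL^{k}h}_{j,q}=\lambda^{-jk}\,\abs{\cL^{k}(\dv^{j}h)}_{0,q}\le \lambda^{-jk}C\,\abs{\dv^{j}h}_{0,q}=\lambda^{-jk}C\,\abs{h}_{j,q}\le C\,\abs{h}_{j,q},
\]
where the last inequality uses $\lambda>1$ and $j\ge 0$. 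Taking the maximum over $0\le j\le p$ and recalling $\norm{\cdot}_{p,q}=\max_{0\le j\le p}\abs{\cdot}_{j,q}$ yields $\norm{\cL^{k}h}_{p,q}\le C\norm{h}_{p,q}$, as required.

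It remains to show that the spectral radius of $\cL$ on $\cB^{p,q}_{0}$ is at least $1$. The constant function $\mathbf 1$ satisfies $Z\mathbf 1=0$, hence $\mathbf 1\in\cC^{\infty}_{0}$, and $\cL\mathbf 1=\mathbf 1\circ\Phi=\mathbf 1$. It is nonzero as an element of $\cB^{p,q}_{0}$: for any nonnegative $\eta\in\cS(I_{\delta})$ only the $j=0$ term contributes, so $\ell_{\eta,m}(\mathbf 1)=\int_{-\delta}^{\delta}\eta(t)\,dt$ and hence $\norm{\mathbf 1}_{p,q}>0$ (equivalently, $\iota\mathbf 1\neq 0$ since $\langle\iota\mathbf 1,g\rangle=\int_{M}g$, and $\iota$ is injective by Lemma~\ref{lem:distributions}). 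Therefore $1$ is an eigenvalue of $\cL$ on $\cB^{p,q}_{0}$, so its spectral radius equals $1$. There is essentially no obstacle in this proof; the one point worth isolating is the uniformity in $k$ of this Lasota--Yorke-type estimate, which is supplied precisely by the contracting factor $\lambda^{-jk}\le 1$ — that is, $\dv$-derivatives lie in the direction in which $\cL$ contracts, which is exactly why the anisotropic norm $\norm{\cdot}_{p,q}$ demands regularity along $\dv$.
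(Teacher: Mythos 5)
Your proof is correct and follows essentially the same route as the paper's: combine Lemma~\ref{lem:XL-LX} with Lemma~\ref{lem:LY-first} to get $\snorm{\cL^k h}_{j,q}\lesssim\snorm{h}_{j,q}$ uniformly in $k$, deduce boundedness and spectral radius $\leq 1$ by density, and then use the fixed constant function to pin the spectral radius on $\cB^{p,q}_{0}$ at exactly $1$. The paper's proof is terser (it leaves the invocation of Lemma~\ref{lem:XL-LX} implicit and does not spell out that $\mathbf 1\neq 0$ in $\cB^{p,q}_{0}$); your version makes both points explicit, which is a mild improvement in rigour rather than a different argument.
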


\begin{proof}
    We observe that \(\snorm{h}_{j,q} = \snorm{\dv^{j} h}_{0,q} \).
    Hence the estimate of the above lemma implies also that \(\snorm{\cL^k h}_{j,q} \lesssim \snorm{h}_{j,q}\) for all \(j\in \bN\).
    This suffices to show that the operator \(\cL\) extends to a continuous operator on \(\cB_{N}^{p,q}\).   
    Moreover, the bound is uniform in \(k\) and so the spectral radius is at most one.
    The observation that constant functions are contained within \(\cC^{\infty}_{0}\) and are invariant under \(\cL\) implies that the spectral radius of \(\cL  : \cB_{0}^{p,q} \to \cB_{0}^{p,q}\) is equal to \(1\).
\end{proof}

In the following lemma we take advantage of the contraction in the \(\dv\)-direction in order to strengthen the previous estimates.

\begin{lemma}
    \label{lem:LY-top}
    There exists \(C>0\) such that, for all \(j, k \in \bN_0\), \(q \in \bN\), \(h\in \cC^{\infty}_{N}(M)\),
    \[
        \snorm{\smash{\cL^k h}}_{j,q} \leq C \lambda^{-jk} \snorm{h}_{j,q}.
    \]
\end{lemma}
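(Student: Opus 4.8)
The plan is to combine Lemma~\ref{lem:LY-first} with the commutation relation from Lemma~\ref{lem:XL-LX}, which tells us precisely how powers of $\dv$ interact with $\cL^k$. Recall that $\snorm{h}_{j,q} = \snorm{\dv^j h}_{0,q}$ by the definition \eqref{eq:def-semi} of the semi-norm, so the statement we must prove is really a statement about the $0,q$ semi-norm applied to $\dv^j \cL^k h$.

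First I would apply the identity $\dv^j \cL^k h = \lambda^{-jk}\cL^k(\dv^j h)$ from Lemma~\ref{lem:XL-LX}, which is valid for $h \in \cC^\infty_N$ (and we may reduce to this case since everything extends continuously). This immediately gives
\[
    \snorm{\cL^k h}_{j,q} = \snorm{\dv^j \cL^k h}_{0,q} = \lambda^{-jk}\snorm{\cL^k(\dv^j h)}_{0,q}.
\]
Then I would invoke Lemma~\ref{lem:LY-first}, applied to the function $\dv^j h \in \cC^\infty_N$ (which lies in $\cC^\infty_N$ because $[\dv, Z] = 0$), to bound $\snorm{\cL^k(\dv^j h)}_{0,q} \leq C\snorm{\dv^j h}_{0,q} = C\snorm{h}_{j,q}$, with $C$ the constant furnished by that lemma, independent of $k$, $q$, and $h$. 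Combining the two displays yields $\snorm{\cL^k h}_{j,q} \leq C\lambda^{-jk}\snorm{h}_{j,q}$, which is exactly the claim. The cases $j = 0$ or $k = 0$ are covered trivially (the factor $\lambda^{-jk} = 1$ and one just re-reads Lemma~\ref{lem:LY-first}, or the statement is vacuous).

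There is really no substantive obstacle here: the lemma is a clean consequence of the two earlier results, and the only thing to be slightly careful about is the bookkeeping on function spaces — namely checking that $\dv^j h$ stays in $\cC^\infty_N$ so that Lemma~\ref{lem:LY-first} applies, and noting that the constant $C$ from Lemma~\ref{lem:LY-first} does not depend on the data so it can be reused verbatim. The point of the lemma, as the surrounding text flags, is that it upgrades the uniform-in-$k$ bound of Lemma~\ref{lem:LY-first} by extracting the genuine contraction factor $\lambda^{-jk}$ in the top-order ($j$-th) $\dv$-derivative, which is what the contraction hypothesis $\Phi_* \dv = \lambda^{-1}\dv$ in \eqref{eq:part-hyp} buys us; this is the ingredient that will later feed into a Lasota–Yorke-type inequality and the quasi-compactness argument.
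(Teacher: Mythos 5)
Your proof is correct and follows essentially the same route as the paper: rewrite $\snorm{\cL^k h}_{j,q}$ as $\snorm{\dv^j \cL^k h}_{0,q}$, pull out the factor $\lambda^{-jk}$ via Lemma~\ref{lem:XL-LX}, and then apply Lemma~\ref{lem:LY-first} to $\dv^j h \in \cC^\infty_N$. The additional bookkeeping you flag (that $\dv^j h$ remains in $\cC^\infty_N$, and the trivial $j=0$ case) is sound and matches what the paper leaves implicit.
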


\begin{proof}
    For $j \geq 1$, by definition of the norm and Lemma~\ref{lem:XL-LX},
    \begin{align*}
        \snorm{\smash{\cL^k h}}_{j,q}
        = \snorm{\smash{\dv^j \cL^k h}}_{0,q}
         & = \lambda^{-jk} \snorm{\smash{\cL^k(\dv^j  h)}}_{0,q} \\
         & \lesssim \lambda^{-jk} \snorm{\dv^j  h}_{0,q}
        =  \lambda^{-jk} \snorm{h}_{j,q}. \qedhere
    \end{align*}
\end{proof}

Lemma~\ref{lem:LY-top} implies that \( \norm{\cL^k h}_{p,q} \leq C  \lambda^{-pk} \norm{h}_{p,q} + \sup_{j<p}  \norm{\cL^k h}_{j,q} \).
We will proceed by estimating the second term, i.e., \(\norm{\cL^k h}_{j,q} \) for \(0\leq j \leq p-1\).
This is the content of Lemma~\ref{lem:LY-other}, using the expansion in the \(\dw\)-direction.
Before getting there, it is convenient to review the standard details related to the mollifier and smoothing which we will use in the remainder of this paper (see \cite[\S4]{EG15}).
For any $n \in \bN$, let \(\rho \in \cC^\infty(\bR^n, \bR_{+})\) be the standard mollifier supported on \(\{\abs{x}<1\}\) and \(\int_{\abs{x}<1} \rho(x) \ dx = 1\).
For \(\epsilon>0\), let \(\rho_\epsilon(x) = \epsilon^{-n} \rho(\epsilon^{-1}x)\) be $\epsilon$-sized mollifier.
The smoothed version of a function $\eta$ is defined as 
\[\eta_\epsilon(x) = \int \rho_{\epsilon}(x-y) \cdot \eta(y) \ dy.\]

\begin{lemma} 
    \label{lem:moll}
    For \(\epsilon > 0\) and $n = 1$, we obtain explicit estimates as follows:
    \begin{equation}
        \label{eq:bump-eta}
        \norm{\eta - \eta_{\epsilon}}_{\cC^{q-1}} \leq  \epsilon,
        \quad
        \norm{\eta_{\epsilon}}_{\cC^{q}} \leq 1,
        \quad
        \norm{\eta_{\epsilon}}_{\cC^{q+1}} \leq 2 \epsilon^{-1}.
    \end{equation}
\end{lemma}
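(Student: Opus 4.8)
The plan is to derive all three estimates from two elementary properties of one‑dimensional (\(n=1\)) convolution with \(\rho_{\epsilon}\): first, since \(\rho_{\epsilon}\geq 0\) and \(\int\rho_{\epsilon}=1\), mollification does not increase the sup norm, \(\abs{\rho_{\epsilon}*g}_{\infty}\leq\abs{g}_{\infty}\); second, differentiation passes through the convolution, \((\eta_{\epsilon})^{(k)}=\rho_{\epsilon}*\eta^{(k)}=(\rho_{\epsilon})^{(j)}*\eta^{(k-j)}\) for any \(0\leq j\leq k\) (legitimate since \(\eta\in\cS(I_{\delta})\) is smooth with compact support). Throughout one unwinds \(\norm{f}_{\cC^{r}}=\max_{0\leq k\leq r}2^{r-k}\abs{\smash{f^{(k)}}}_{\infty}\) and works with \(\eta\) normalised by \(\norm{\eta}_{\cC^{q}}\leq 1\) (as in the definition of \(\norm{\cdot}_{p,q}\) and in all later uses; for general \(\eta\) the bounds carry an extra factor \(\norm{\eta}_{\cC^{q}}\)). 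The dyadic weights built into \(\norm{\cdot}_{\cC^{r}}\) are exactly what will render the constants independent of \(q\).

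For the middle estimate I would note, for \(0\leq k\leq q\), that \(\abs{(\eta_{\epsilon})^{(k)}}_{\infty}=\abs{\rho_{\epsilon}*\eta^{(k)}}_{\infty}\leq\abs{\eta^{(k)}}_{\infty}\), multiply by \(2^{q-k}\) and maximise over \(k\) to get \(\norm{\eta_{\epsilon}}_{\cC^{q}}\leq\norm{\eta}_{\cC^{q}}\leq 1\); that is, mollification is a contraction in every \(\cC^{r}\). For the first estimate I would write \(\eta^{(k)}(x)-(\eta_{\epsilon})^{(k)}(x)=\int\rho_{\epsilon}(y)\,\big(\eta^{(k)}(x)-\eta^{(k)}(x-y)\big)\,dy\) and bound the difference by \(\abs{y}\,\abs{\eta^{(k+1)}}_{\infty}\leq\epsilon\,\abs{\eta^{(k+1)}}_{\infty}\) on \(\supp\rho_{\epsilon}\subset\{\abs{y}<\epsilon\}\), giving \(\abs{\eta^{(k)}-(\eta_{\epsilon})^{(k)}}_{\infty}\leq\epsilon\,\abs{\eta^{(k+1)}}_{\infty}\) for \(0\leq k\leq q-1\); multiplying by \(2^{(q-1)-k}\), the powers of two telescope, \(2^{(q-1)-k}\,\abs{\eta^{(k+1)}}_{\infty}=2^{\,q-(k+1)}\abs{\eta^{(k+1)}}_{\infty}\leq\norm{\eta}_{\cC^{q}}\), so every term is \(\leq\epsilon\norm{\eta}_{\cC^{q}}\leq\epsilon\), and the maximum over \(k\leq q-1\) closes this case.

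For the third estimate I would split on the order of derivative. For \(0\leq k\leq q\), the contraction property gives \(2^{(q+1)-k}\abs{(\eta_{\epsilon})^{(k)}}_{\infty}\leq 2\cdot 2^{q-k}\abs{\eta^{(k)}}_{\infty}\leq 2\norm{\eta}_{\cC^{q}}\leq 2\leq 2\epsilon^{-1}\), using \(\epsilon<1\) (harmless, since in the applications \(\epsilon<\delta\)). For the top order \(k=q+1\) the point is to move exactly one derivative onto the mollifier and leave the remaining \(q\) on \(\eta\): \((\eta_{\epsilon})^{(q+1)}=(\rho_{\epsilon})'*\eta^{(q)}\), so \(\abs{(\eta_{\epsilon})^{(q+1)}}_{\infty}\leq\norm{(\rho_{\epsilon})'}_{L^{1}}\,\abs{\eta^{(q)}}_{\infty}\); the scaling \(\norm{(\rho_{\epsilon})'}_{L^{1}}=\epsilon^{-1}\norm{\rho'}_{L^{1}}\) together with \(\abs{\eta^{(q)}}_{\infty}\leq\norm{\eta}_{\cC^{q}}\leq 1\) bounds this term by \(\epsilon^{-1}\norm{\rho'}_{L^{1}}\). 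Taking the maximum over \(0\leq k\leq q+1\) yields \(\norm{\eta_{\epsilon}}_{\cC^{q+1}}\leq\max\{2,\epsilon^{-1}\norm{\rho'}_{L^{1}}\}\).

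Everything here is routine and I do not foresee a genuine obstacle. The only place a choice enters is fixing, once and for all, the standard mollifier \(\rho\) so that \(\norm{\rho'}_{L^{1}}\leq 2\): for a symmetric unimodal bump one has \(\norm{\rho'}_{L^{1}}=2\abs{\rho}_{\infty}\), and for the usual bump with \(\int\rho=1\) on an interval of length \(2\) this quantity is below \(2\); with this choice the third bound is \(2\epsilon^{-1}\) exactly, while all remaining constants are imposed by the dyadic normalisation in the definition of the \(\cC^{r}\) norm.
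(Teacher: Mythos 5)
Your proof is correct and follows essentially the same route as the paper: the mean-value theorem for the first bound, contractivity of mollification (Young's inequality with \(\norm{\rho_\epsilon}_{L^1}=1\)) for the second, and shifting one derivative onto \(\rho_\epsilon\) with the scaling \(\norm{\rho_\epsilon'}_{L^1}=\epsilon^{-1}\norm{\rho'}_{L^1}\) for the third. You are actually a bit more careful than the paper: you make the telescoping of the dyadic weights \(2^{r-k}\) explicit (the paper's written bound \(|\eta_\epsilon^{(j)}-\eta^{(j)}|_\infty\leq\epsilon\) alone would only give \(\norm{\eta-\eta_\epsilon}_{\cC^{q-1}}\leq 2^{q-1}\epsilon\) without that observation), and you treat the orders \(k\leq q\) in the third estimate, noting the harmless need for \(\epsilon\leq 1\), whereas the paper only bounds the top derivative \(k=q+1\).
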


\begin{proof}
    For the first inequality, we follow the proof of standard argument (see \cite[Theorem 4.1-(ii)]{EG15}).
    For any $j \geq 0$, set
    $$
        \eta_\epsilon^{(j)}(x)=\rho_\epsilon\star \eta^{(j)}(x)=\int_{-1}^{1}\rho(z)\cdot\eta^{(j)}(x-\epsilon z)dz.
    $$
    Then,
    \begin{align*}
        |\eta_\epsilon^{(j)}(x) - \eta^{(j)}(x)| & = \left|\int_{-1}^1 \rho(z)\cdot\eta^{(j)}(x-\epsilon z)dz\right|                                                      \\
                                                 & \leq \epsilon \int_{-1}^1|z|\cdot\rho(z)\cdot\left|\frac{\eta^{(j)}(x) - \eta^{(j)}(x-\epsilon z)}{\epsilon z} \right|\ dz.
    \end{align*}
    By applying mean-value theorem and by the fact that $\norm{\eta}_{\cC^{q}} \leq 1$, whenever $j\leq q-1$
    $$
        |\eta_\epsilon^{(j)}(x) - \eta^{(j)}(x)| \leq \epsilon\norm{\smash{\eta^{(j+1)}}}_{\cC^0} \int_{-1}^1|z|\cdot\rho(z)\ dz \leq \epsilon.
    $$
    For the second inequality, whenever $j\leq q$,
    $$
        \norm{\smash{\eta_\epsilon^{(j)}}}_{\cC^0} \leq \norm{\smash{\rho_\epsilon\star \eta^{(j)}}}_{\cC^0}  \leq \norm{\rho_\epsilon}_{L^1} \norm{\smash{\eta^{(j)}}}_{\cC^0} \leq 1.
    $$
    For the last inequality, integration by parts and the previous inequality give
    $$
        \norm{\smash{\eta_\epsilon^{(q+1)}}}_{\cC^0} = \norm{\smash{\rho'_\epsilon\star \eta^{(j)}}}_{\cC^0}  \leq \norm{\rho'_\epsilon}_{L^1} \norm{\smash{\eta^{(q)}}}_{\cC^0} \leq \norm{\rho'_\epsilon}_{L^1}.
    $$
    Since $\rho_\epsilon$ is compactly supported,
    $$
        \norm{\rho'_\epsilon}_{L^1} = 2 \int_{-\infty}^{0} \rho'_\epsilon(z)\ dz = 2\rho_\epsilon(0) = \frac{2\rho(0)}{\epsilon} \leq \frac{2}{\epsilon}.
    $$
    Therefore, we conclude the lemma.
\end{proof}

\begin{lemma}
    \label{lem:LY-other}
    For all \(j,k \in \bN_0\), \(q \in \bN\), and \(h\in \cC^{\infty}_{N}\),
    \[
        \snorm{\smash{\cL^k h}}_{j,q}
        \leq
        C \lambda^{-qk} \snorm{h}_{j,q} +
        C_k  \snorm{h}_{j,q+1},
    \]
    where \(C>0\) is a constant and \(C_k\) depends only on \(k\).
\end{lemma}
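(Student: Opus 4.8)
The plan is to first reduce to the case $j=0$ and then, following the proof of Lemma~\ref{lem:LY-first}, transfer the iterate onto the $\dw$-flow parameter via the substitution $s=\lambda^k t$; the new ingredient, needed to extract the gain $\lambda^{-qk}$, will be to split the resulting rescaled test function into a smooth part and a rough part using the mollifier of Lemma~\ref{lem:moll}, with the mollification scale tuned to $k$.

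For the reduction, one uses Lemma~\ref{lem:XL-LX}: since $\dv^j\cL^k h=\lambda^{-jk}\cL^k(\dv^j h)$,
\[
\snorm{\smash{\cL^k h}}_{j,q}=\snorm{\smash{\dv^j\cL^k h}}_{0,q}=\lambda^{-jk}\snorm{\smash{\cL^k(\dv^j h)}}_{0,q}\leq\snorm{\smash{\cL^k(\dv^j h)}}_{0,q},
\]
and because $\snorm{\dv^j h}_{0,q}=\snorm{h}_{j,q}$ (and the same with $q+1$), it suffices to treat $j=0$. In that case, exactly as in Lemma~\ref{lem:LY-first}, using $\Phi^k\circ\varphi_t^{\dw}=\varphi_{\lambda^k t}^{\dw}\circ\Phi^k$ and substituting $s=\lambda^k t$, one obtains for $\eta\in\cS(I_\delta)$ with $\norm{\eta}_{\cC^q}\leq 1$
\[
\ell_{\eta,m}(\cL^k h)=\lambda^{-k}\int\eta(\lambda^{-k}s)\cdot h\circ\varphi_{s}^{\dw}(\Phi^k(m))\ ds.
\]
I would then fix $\epsilon=\lambda^{-qk}$, take $\eta_\epsilon$ the mollification of $\eta$ at scale $\epsilon$, and write $\eta(\lambda^{-k}\cdot)=\eta_\epsilon(\lambda^{-k}\cdot)+(\eta-\eta_\epsilon)(\lambda^{-k}\cdot)$, both summands being supported in an interval of length at most $2\lambda^k(\delta+1)$.

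The heart of the argument is the norm bookkeeping. For the weighted $\cC^r$ norm used here, rescaling is harmless: $\norm{f(\lambda^{-k}\cdot)}_{\cC^r}\leq\norm{f}_{\cC^r}$, since the $n$-th derivative of $f(\lambda^{-k}\cdot)$ picks up the factor $\lambda^{-kn}\leq 1$. Combined with Lemma~\ref{lem:moll}, this gives $\norm{\eta_\epsilon(\lambda^{-k}\cdot)}_{\cC^{q+1}}\leq\norm{\eta_\epsilon}_{\cC^{q+1}}\leq 2\lambda^{qk}$; for the rough piece, only its top ($n=q$) derivative retains the full factor $\lambda^{-qk}$ while the lower-order derivatives are controlled by $\norm{\eta-\eta_\epsilon}_{\cC^{q-1}}\leq\epsilon=\lambda^{-qk}$, so that $\norm{(\eta-\eta_\epsilon)(\lambda^{-k}\cdot)}_{\cC^q}\leq 2\lambda^{-qk}$. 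Applying Lemma~\ref{lem:norm-scale} to the two integrals — with parameter $q$ for the rough piece and $q+1$ for the smooth piece, the prefactor $\lambda^{-k}$ and the support length $\abs{A}\asymp\lambda^k$ cancelling — then bounds the contributions of $\ell_{\eta,m}(\cL^k h)$ by $C\lambda^{-qk}\snorm{h}_{0,q}$, with $C$ independent of $k$ (though it may depend on $q$ and $\delta$), and by $C_k\snorm{h}_{0,q+1}$ with $C_k\asymp\lambda^{qk}$. Taking the supremum over $m$ and over admissible $\eta$, and invoking the reduction of the first step, finishes the proof. The only real obstacle is getting this bookkeeping right: one must choose $\epsilon=\lambda^{-qk}$ precisely so that the rescaling gain $\lambda^{-qk}$ on the rough part and the $\epsilon^{-1}$ blow-up of $\norm{\eta_\epsilon}_{\cC^{q+1}}$ balance, and verify that no lower-order derivative of the rough part survives without a factor of $\epsilon$; everything else parallels the proof of Lemma~\ref{lem:LY-first}.
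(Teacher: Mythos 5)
Your proposal is correct and follows essentially the same route as the paper's proof: reduce to $j=0$ via Lemma~\ref{lem:XL-LX}, split $\eta$ into $\eta_\epsilon$ and $\eta-\eta_\epsilon$ with the mollification scale tuned to $\epsilon=\lambda^{-qk}$, control the smooth piece using the $\cC^{q+1}$ bound $\norm{\eta_\epsilon}_{\cC^{q+1}}\lesssim\epsilon^{-1}$ (yielding $C_k\snorm{h}_{j,q+1}$) and the rough piece via the rescaled $\cC^q$ bound $\norm{(\eta-\eta_\epsilon)(\lambda^{-k}\cdot)}_{\cC^q}\lesssim\lambda^{-qk}$ (yielding $C\lambda^{-qk}\snorm{h}_{j,q}$), with Lemma~\ref{lem:norm-scale} absorbing the $\lambda^{-k}\cdot\lambda^k$ cancellation between prefactor and support length. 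The only cosmetic difference is that you change variables before splitting while the paper splits first and invokes Lemma~\ref{lem:LY-first} for the smooth piece, and your explanation of why the lower-order derivatives of the rough piece are controlled by $\epsilon$ while only the top one picks up the full rescaling factor is a bit more explicit than the paper's.
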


\begin{proof}
    By Lemma~\ref{lem:XL-LX} it suffices to prove the case for \(j=0\) because 
    \begin{equation}
        \label{eq:jVh}
        \snorm{\smash{\cL^k h}}_{j,q} = \snorm{\smash{\dv^j \cL^k h}}_{0,q} = \lambda^{-jk} \snorm{\smash{\cL^k \dv^j h}}_{0,q}.
    \end{equation}
    Let \(k\in\bN\), \(m\in M\),
    \(h\in \cC^{\infty}_{N}\)
    and \(\eta \in \cS(I_{\delta})\) such that \(  \norm{\eta}_{\cC^{q}}\leq 1\).
    We need to estimate $\ell_{\eta,m}(\cL^k h)$ (see \eqref{eqn;lLk}).
    For \(\epsilon > 0\) we fix \(\rho_{\epsilon}\) an \(\epsilon\)-sized mollifier and recall \(\eta_{\epsilon}= \eta \star \rho_{\epsilon}\).
    We write
    \[
        \ell_{\eta,m}(\cL^k h)
        = \int_{-\delta}^{\delta} \eta_{\epsilon}(t) \cdot \cL^k h\circ \varphi_{t}^{\dw}(m) \ dt
        + \int_{-\delta}^{\delta} \left(\eta - \eta_{\epsilon}\right)(t) \cdot \cL^k h\circ \varphi_{t}^{\dw}(m) \ dt.
    \]
    For the first integral, we use the estimate of Lemma~\ref{lem:norm-scale}, \ref{lem:LY-first} and \ref{lem:moll}. Then, we obtain 
    \[
        \begin{aligned}
        \abs{\int_{-\delta}^{\delta} \eta_{\epsilon}(t) \cdot \cL^k h\circ \varphi_{t}^{\dw}(m) \ dt}
        &\lesssim 2\delta \norm{\eta_{\epsilon}}_{\cC^{q+1}}  \abs{\smash{\cL^k h}}_{0,q+1}\\
        &\lesssim \epsilon^{-1} \abs{\smash{ h}}_{0,q+1}.
    \end{aligned}
    \]
    As the number $j$ of derivatives matters, combining the equation \eqref{eq:jVh}, we obtain
    $$
    \lambda^{-jk}\epsilon^{-1} \abs{\smash{ h}}_{j,q+1} \leq C_k(\epsilon)\abs{\smash{ h}}_{j,q+1}.
    $$ 
    For the second integral, we repeat the similar estimate for $\cC^{q}$-norm.
    In view of \eqref{eq:renorm} and the definition of $\cL$,
    let \(\tilde{\eta}(t) = \left(\eta - \eta_{\epsilon}\right)(\lambda^{-k}t)\). Note that $q$-th derivative of $\tilde{\eta}$ is bounded by $C\lambda^{-qk}$. Hence taking \(\epsilon = \lambda^{-qk}\), we obtain $\norm{\tilde \eta}_{\cC^q} \leq C\lambda^{-qk}$.
    Summing the above two estimates completes the proof of the lemma.
\end{proof}

\begin{lemma}
    \label{lem:lasota-yorke}
    For all \(k \in \bN_0\), \(p,q \in \bN\), and \(h\in \cB_{N}^{p,q}\),
    \[
        \norm{\smash{\mathcal{L}^k h}}_{p,q} \leq C\lambda ^{-\min{\{p,q\}} k} \norm{h}_{p,q} +  C_k\norm{h}_{p-1,q+1},
    \]
    where \(C>0\) is a constant and \(C_k\) depends only on \(k\).
\end{lemma}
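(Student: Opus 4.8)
The plan is to reduce everything to the semi-norms $\snorm{\cdot}_{j,q}$ from \eqref{eq:def-semi}, using the identity $\norm{h}_{p,q}=\max_{0\le j\le p}\snorm{h}_{j,q}$, and then to split the maximum over $j$ into the top index $j=p$ and the lower range $0\le j\le p-1$, applying a different one of the two preceding lemmas to each. By Lemma~\ref{lem:cont-op}, $\cL$ extends continuously to $\cB_N^{p,q}$ and both sides of the claimed inequality depend continuously on $h$, so by density it suffices to establish the bound for $h\in\cC_N^\infty$.

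For the top index, Lemma~\ref{lem:LY-top} with $j=p$ gives $\snorm{\cL^k h}_{p,q}\le C\lambda^{-pk}\snorm{h}_{p,q}$. Since $\lambda>1$ we have $\lambda^{-pk}\le\lambda^{-\min\{p,q\}k}$, and $\snorm{h}_{p,q}\le\norm{h}_{p,q}$, so this term is bounded by $C\lambda^{-\min\{p,q\}k}\norm{h}_{p,q}$. For each lower index $0\le j\le p-1$, Lemma~\ref{lem:LY-other} gives $\snorm{\cL^k h}_{j,q}\le C\lambda^{-qk}\snorm{h}_{j,q}+C_k\snorm{h}_{j,q+1}$; here $\lambda^{-qk}\le\lambda^{-\min\{p,q\}k}$, $\snorm{h}_{j,q}\le\norm{h}_{p,q}$, and, crucially, $\snorm{h}_{j,q+1}\le\norm{h}_{p-1,q+1}$ because $j\le p-1$. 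Taking the maximum over $0\le j\le p$ of these two estimates yields the stated inequality, with the same constant $C$ and with $C_k$ depending only on $k$.

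The only point worth flagging is why the case split is unavoidable: applying Lemma~\ref{lem:LY-other} at $j=p$ would leave a remainder $\snorm{h}_{p,q+1}$, which cannot be absorbed into $\norm{h}_{p-1,q+1}$, whereas the contraction in the $\dv$-direction exploited by Lemma~\ref{lem:LY-top} handles the top index with no loss of $\dw$-regularity. Beyond this, the lemma is purely a bookkeeping combination of the two main norm estimates, with $\lambda>1$ used to replace the exponents $pk$ and $qk$ by the common value $\min\{p,q\}k$; I do not expect any genuine obstacle.
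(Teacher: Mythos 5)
Your proof is correct and follows essentially the same route as the paper's: reduce to $h\in\cC^\infty_N$ by density, write $\norm{\cdot}_{p,q}=\max_{0\le j\le p}\snorm{\cdot}_{j,q}$, apply Lemma~\ref{lem:LY-top} at $j=p$ and Lemma~\ref{lem:LY-other} at $j\le p-1$, and dominate the geometric factors by $\lambda^{-\min\{p,q\}k}$. (The paper's proof cites Lemma~\ref{lem:LY-first} where Lemma~\ref{lem:LY-top} is evidently meant for the top index; you correctly invoke the latter.)
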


\begin{proof}
    By density, it suffices to prove the inequality for \(h\in  \cC^{\infty}_{N}\).
    Combining the estimates of Lemma~\ref{lem:LY-first} and Lemma~\ref{lem:LY-other} gives the following estimate.
    \begin{align*}
        \norm{\smash{\mathcal{L}^k h}}_{p,q}
         & \leq \snorm{\smash{\cL^k h}}_{p,q} +  \sup_{0\leq j \leq p-1}  \snorm{\smash{\cL^k h}}_{j,q} \\
         & \lesssim  \lambda^{-pk} \snorm{h}_{p,q}
        +  \lambda^{-qk} \norm{h}_{p-1,q} +
        \norm{h}_{p-1,q+1}                                                                              \\
         & \leq  \lambda^{-\min{\{p,q\}} k} \norm{h}_{p,q}
        + C_k  \norm{h}_{p-1,q+1}. \qedhere
    \end{align*}
\end{proof}

In the next lemma, we see that the essential spectral radius can be made as small as desired and so subsequently we need only work with the point spectrum. 

\begin{lemma}
    \label{lem:spectrum}
    Let \(p,q \in \bN\) and \(N\in \bZ \setminus \{0\}\).
    The operator \(\cL  : \cB_{N}^{p,q} \to \cB_{N}^{p,q}\) has essential spectral radius not greater than \(\lambda^{-\min{\{p,q\}}}\).
\end{lemma}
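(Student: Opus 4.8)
The plan is to deduce the bound from the abstract spectral theorem of Hennion~\cite{Hennion93} (a Lasota--Yorke type criterion for the essential spectral radius; see also the discussion in \cite{Baladi16} and \cite[\S5]{GL06}), all of whose hypotheses have already been verified in the preceding lemmas.

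First I would collect the two structural facts about the scale of spaces. By the continuous inclusion~\eqref{eq:continuous} the identity map $\cB_N^{p,q}\to\cB_N^{p-1,q+1}$ is bounded, so $\norm{\cdot}_{p-1,q+1}$ restricts to a seminorm on $\cB_N^{p,q}$ dominated by a constant multiple of $\norm{\cdot}_{p,q}$; and by Lemma~\ref{lem:compact-embed} the inclusion $\cB_N^{p,q}\subset\cB_N^{p-1,q+1}$ is in fact compact (from the coarser to the finer space the first index strictly increases and the second strictly decreases), i.e.\ the closed unit ball of $(\cB_N^{p,q},\norm{\cdot}_{p,q})$ is precompact for $\norm{\cdot}_{p-1,q+1}$. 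By Lemma~\ref{lem:cont-op}, $\cL$ is a bounded operator on $\cB_N^{p,q}$.

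Next I would feed in the iterated Lasota--Yorke inequality of Lemma~\ref{lem:lasota-yorke}: there are $C>0$ and constants $C_k$ (depending only on $k$) with
\[
    \norm{\smash{\cL^k h}}_{p,q} \le C\lambda^{-\min\{p,q\}k}\norm{h}_{p,q} + C_k\norm{h}_{p-1,q+1}, \qquad h\in\cB_N^{p,q},\ k\in\bN_0.
\]
Setting $r_k=C\lambda^{-\min\{p,q\}k}$ one has $r_k^{1/k}\to\lambda^{-\min\{p,q\}}$, so Hennion's theorem applies with the weak seminorm $\norm{\cdot}_{p-1,q+1}$ and yields that the essential spectral radius of $\cL$ on $\cB_N^{p,q}$ is at most $\lambda^{-\min\{p,q\}}$, which is exactly the claim.

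There is no substantial obstacle left at this point: the genuinely hard work — the geometric compactness of Lemma~\ref{lem:compact-embed} and the gain of the factor $\lambda^{-\min\{p,q\}k}$ in Lemma~\ref{lem:lasota-yorke}, obtained by separately exploiting contraction along $\dv$ (Lemma~\ref{lem:LY-top}) and expansion along $\dw$ via mollification (Lemma~\ref{lem:LY-other}) — is already done. The only point needing a little care is the bookkeeping of the index shift $(p,q)\mapsto(p-1,q+1)$: one must check that it is simultaneously a \emph{continuous} and a \emph{compact} change of space and that it matches the weak-norm term appearing in the Lasota--Yorke estimate, which is precisely what \eqref{eq:continuous} and Lemma~\ref{lem:compact-embed} guarantee.
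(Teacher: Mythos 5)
Your proposal is correct and follows essentially the same route as the paper: the essential spectral radius bound is deduced from the compact embedding (Lemma~\ref{lem:compact-embed}) together with the iterated Lasota–Yorke inequality (Lemma~\ref{lem:lasota-yorke}) via Hennion's argument. The only cosmetic difference is that the paper spells out Hennion's criterion explicitly, covering the unit ball with small $\norm{\cdot}_{p-1,q+1}$-balls and invoking Nussbaum's formula for the ball measure of non-compactness, whereas you cite the theorem as a black box; the content is identical.
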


\begin{proof}
    The estimate on the essential spectral radius is due, as follows, to Hennion's argument \cite{Hennion93} (see also \cite{DKL21}).
    Let \(B_0 = \{h \in \cB_{N}^{p,q} : \norm{h}_{p,q} \leq 1\}\) and \(B_k = \cL^k B_0\).
    The compact embedding (Lemma~\ref{lem:compact-embed}) implies that \(B_0\) admits a finite cover by balls of \(\norm{\cdot}_{p-1,q+1}\)-diameter less than \(C_k^{-1} \lambda^{-\min{\{p,q\}}k}\). 
    The image under \(\cL^k\) of this cover is a cover of \(B_k\).
    Using the inequality (Lemma~\ref{lem:lasota-yorke}) we know that the sets of this cover have \(\norm{\cdot}_{p,q}\)-diameter not greater than \(r_k = (C+1)\lambda^{-\min{\{p,q\}}k}\).
    According to the formula of Nussbaum~\cite{Nussbaum70} (ball measure of non-compactness) this implies that the essential spectral radius of \(\cL  : \cB_{N}^{p,q} \to \cB_{N}^{p,q}\) is not greater than \(\lim_{k\to\infty} {r_k}^{1/k} = \lambda^{-\min{\{p,q\}}}\).
\end{proof}

\section{Self-similar structure of the spectrum}
\label{sec:self-similar}

This section is devoted to the part of the argument which allows us to derive the full spectrum of \(\cL : \cB_{N}^{p,q} \to \cB_{N}^{p,q}\), \(N\neq 0\) from the peripheral spectrum.
More precisely, we will show that,
\begin{enumerate}
    \item The exterior part of the spectrum is contained within the spectrum of \(\cL\) restricted to the kernel of \(\dv\),
    \item The remainder of the discrete spectrum consists of multiple scaled copies of the exterior part.
\end{enumerate} 
As before, \(M = \lquotient{\Gamma}{\bH}\) and \(N\in \bZ \setminus \{0\}\) are considered chosen and fixed for the entire section.

We will repeatedly take advantage of the lemma of Baladi \& Tsujii~\cite[Lemma A.1]{BT08} which tells us when the point spectrum of an operator considered on two different Banach spaces will coincide.
Applying \cite[Lemma A.1]{BT08} in the present context (using Lemma~\ref{lem:distributions} and the fact that all \(\cB_{N}^{p,q}\) share a dense subset by construction) we have the following. 

\begin{lemma}
    \label{lem:BL}
    Let \(\cB_{N}^{p,q}\) and \(\cB_{N}^{p',q'}\) be two anisotropic Banach spaces as defined in Section~\ref{sec:anisotropic}.
    Suppose that the restrictions of \(\cL\) to \(\cB_{N}^{p,q}\) and \(\cB_{N}^{p',q'}\) are bounded operators whose essential spectral radii are both strictly smaller than \(\rho>0\).
    Then the eigenvalues of \(\cL : \cB_{N}^{p,q} \to \cB_{N}^{p,q}\) and \(\cL : \cB_{N}^{p',q'} \to \cB_{N}^{p',q'}\) in \(\{z\in \bC : \abs{z} > \rho\}\) coincide.
    Furthermore the corresponding generalized eigenspaces coincide and are contained in the intersection \(\cB_{N}^{p,q} \cap \cB_{N}^{p',q'}\).
\end{lemma}

The following tells us that the outer part of the spectrum is given by the spectrum of \(\cL\) restricted to the kernel of \(\dv\).

\begin{lemma}
    \label{lem:spec-ker}
    Let \(p,q\in \bN\), \(N\neq 0\).
    The spectrum of \(\cL : \cB_{N}^{p,q} \to \cB_{N}^{p,q}\) restricted to \(\{z\in \bC:\abs{z} > \lambda^{-1}\}\) is contained within the spectrum of \({\left.\cL\right|}_{\ker_N(\dv)}\).
\end{lemma}

\begin{proof}
    Suppose that \(z\) is an eigenvalue of \(\cL : \cB_{N}^{p,q} \to \cB_{N}^{p,q}\) and that \(\abs{z} > \lambda^{-1}\).
    Consequently there exists \(h \in \cB_{N}^{p,q}\) such that \(\cL h = z h\).
    By Lemma~\ref{lem:XL-LX},
    \[
       \cL (\dv h) = \lambda \dv \cL (h) = \lambda z \cdot \dv h.
    \]
    This means either that \( \lambda z \) is an eigenvalue of \(\cL : \cB_{N}^{p-1,q} \to \cB_{N}^{p-1,q}\)  or that \(\dv h = 0\).
    However \(\abs{\lambda z}>1\) but the spectral radius of \(\cL\) is at most 1 (Lemma~\ref{lem:spectrum}).
    This means that \(\dv h = 0\), i.e., \(h \in \ker_N(\dv)\).
\end{proof}

We now consider the remainder of the discrete spectrum.

\begin{lemma}
    \label{lem:spec-induc}
    Let \(p,q, k\in \bN\) with \(p,q > k\).
    The spectrum of \(\cL : \cB_{N}^{p,q} \to \cB_{N}^{p,q}\), \(N\neq 0\), restricted to \(\{z\in \bC : \lambda^{-(k+1)}< \abs{z}  \leq \lambda^{-k}\}\) is contained within \(\{z\in \bC: \lambda^k z \in \spec{{\left.\cL\right|}_{\ker_N(\dv)}} \}\).
\end{lemma}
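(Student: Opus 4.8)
The plan is to transport an eigenvector down through the contracting direction $\dv$ by exactly $k$ steps and then appeal to Lemma~\ref{lem:spec-ker}. First I would note that $p,q > k$ gives $\min\{p,q\} \ge k+1$, so Lemma~\ref{lem:spectrum} bounds the essential spectral radius of $\cL$ on $\cB_{N}^{p,q}$ by $\lambda^{-(k+1)}$; hence the annulus $\{z : \lambda^{-(k+1)} < \abs{z} \le \lambda^{-k}\}$ lies outside the essential-spectral-radius disk, and every point of the spectrum it contains is an isolated eigenvalue of finite algebraic multiplicity. It therefore suffices to fix such an eigenvalue $z$ together with a nonzero $h \in \cB_{N}^{p,q}$ satisfying $\cL h = z h$, and to show that $\lambda^k z \in \spec{{\left.\cL\right|}_{\ker_N(\dv)}}$.

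Iterating the commutation relation of Lemma~\ref{lem:XL-LX} gives $\cL(\dv^j h) = \lambda^{j} z\, \dv^j h$ for every $j$, while Lemma~\ref{lem:cont-X} gives $\dv^j h \in \cB_{N}^{p-j,q}$ for $0 \le j \le p$. I would first rule out the possibility $\dv^k h = 0$. If it held, let $j$ be the least index with $\dv^j h = 0$; since $h \ne 0$ we have $1 \le j \le k$, and then $\dv^{j-1} h$ is a nonzero element of $\ker_N(\dv)$ and an eigenvector of ${\left.\cL\right|}_{\ker_N(\dv)}$ with eigenvalue $\lambda^{j-1} z$. By Lemma~\ref{lem:outer-spec} every eigenvalue of ${\left.\cL\right|}_{\ker_N(\dv)}$ has modulus $\lambda^{1/2}$, so $\abs{z} = \lambda^{3/2 - j} \ge \lambda^{3/2 - k} > \lambda^{-k}$, contradicting $\abs{z} \le \lambda^{-k}$. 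Hence $\dv^k h \ne 0$.

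Consequently $\dv^k h$ is a nonzero element of $\cB_{N}^{p-k,q}$ (with $p-k \ge 1$ and $q \ge 1$) which is an eigenvector of $\cL : \cB_{N}^{p-k,q} \to \cB_{N}^{p-k,q}$ with eigenvalue $\lambda^k z$; moreover $\abs{\lambda^k z} \in (\lambda^{-1}, 1]$, so in particular $\abs{\lambda^k z} > \lambda^{-1}$. Applying Lemma~\ref{lem:spec-ker} with $p-k$ in place of $p$ identifies the part of the spectrum of $\cL$ on $\cB_{N}^{p-k,q}$ lying in $\{w : \abs{w} > \lambda^{-1}\}$ with the spectrum of ${\left.\cL\right|}_{\ker_N(\dv)}$; since $\lambda^k z$ belongs to the former, it belongs to the latter, which is exactly the claim.

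I expect the only step that is not a direct bookkeeping exercise to be the case analysis forcing $\dv^k h \ne 0$: it genuinely uses the precise modulus $\lambda^{1/2}$ of the peripheral eigenvalues from Lemma~\ref{lem:outer-spec} together with the location of the annulus, and one must keep track that the shifted indices $p-k$ and $q$ still lie in $\bN$ so that Lemmas~\ref{lem:cont-X} and~\ref{lem:spec-ker} remain applicable. Everything else is an iterated application of the relation $\cL\dv = \lambda\,\dv\cL$ and the quasi-compactness bound of Lemma~\ref{lem:spectrum}.
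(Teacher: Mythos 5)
Your proof is correct and takes essentially the same route as the paper: apply \(\dv\) to shift the eigenvalue by a factor of \(\lambda\), rule out premature vanishing, and reduce to the \(k=0\) case (Lemma~\ref{lem:spec-ker}). The packaging differs slightly. The paper phrases this as an induction on \(k\), taking one derivative per step and invoking the Banach-space-independence result \cite[Lemma A.1]{BT08} so that the inductive hypothesis can be quoted without explicitly tracking which space \(\dv h\) lives in; you instead unroll the induction, pass directly to \(\dv^k h \in \cB_{N}^{p-k,q}\) via Lemma~\ref{lem:cont-X}, and apply Lemma~\ref{lem:spec-ker} once with \(p-k\) in place of \(p\), which removes the explicit appeal to \cite[Lemma A.1]{BT08}. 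For the degenerate case you argue via the modulus of the eigenvalues in \(\ker_N(\dv)\) from Lemma~\ref{lem:outer-spec}; the paper cites Lemma~\ref{lem:spec-ker} for the same purpose. Both are legitimate, and your argument is indeed robust to the sign of the exponent \(\pm\frac{1}{2}\) in Lemma~\ref{lem:outer-spec}, since in either case the minimal-\(j\) computation forces \(\abs{z} > \lambda^{-k}\). One small bookkeeping point worth noting: your check that \(p-k \ge 1\) so that Lemma~\ref{lem:spec-ker} is applicable on \(\cB_{N}^{p-k,q}\) is exactly where the hypothesis \(p > k\) is used, mirroring the implicit role it plays in the paper's induction.
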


\begin{proof}
    We take advantage of the fact that the point spectrum is independent of the Banach space (Lemma~\ref{lem:BL}).
    We will prove, by induction, that for all \(k\in \bN_0\),
    \begin{equation*}
        \text{if \(z \in \spec{\cL}\) and \(\lambda^{-(k+1)}< \abs{z}  \leq \lambda^{-k}\)},
        \quad
        \text{then \(\lambda^k z \in \spec{{\left.\cL\right|}_{\ker_N(\dv)}}\)}.
    \end{equation*}
    The case \(k=0\) is given by Lemma~\ref{lem:spec-ker}.
    We now suppose the statement is already proven for \(k\).
    Suppose that \(z\) is an eigenvalue of \(\cL\) and that \(\lambda^{-(k+2)}< \abs{z}  \leq \lambda^{-(k+1)}\).
    Let \(h \neq 0\) be such that \(\cL h = z h\).
    By Lemma~\ref{lem:XL-LX},
    \[
        \cL (\dv h) = \lambda \dv \cL (h) = \lambda z \cdot \dv h.   
    \]
    This means that, either \( \lambda z \) is an eigenvalue of \(\cL\) or \(\dv h = 0\).
    In the second case, \(h \in \ker(\dv)\) so by Lemma~\ref{lem:spec-ker}, \(\abs{z} > \lambda^{-1}\) but this contradicts the fact that \(\abs{z} \leq \lambda^{-(k+1)} \leq \lambda^{-1}\).
    This means that the first case is the only possibility.
    Consequently we know that \(\lambda z \in \spec{\cL}\) and  \(\lambda^{-(k+1)}< \abs{\lambda z}  \leq \lambda^{-k}\).
    We then apply the inductive hypothesis and conclude.
\end{proof}

Next we take advantage of the operator \(\dw\) in order to upgrade Lemma~\ref{lem:spec-induc} to equality.
The technique we will use has some similarity to the argument used for pseudo-Anosov maps \cite{FGL19}.
It is very different to the technique of Flaminio \& Forni~\cite[A.3]{FF06} who took advantage of a basis of Hermite functions to write a formal inverse of \(\dv\) and then further argue for the ``iterated invariant distributions'' (see also \cite{Forni20}).

\begin{lemma}
    \label{lem:iterated-spec}
    Let \(p,q, k_0\in \bN\) with \(p,q > k_0\).
    The spectrum of \(\cL : \cB_{N}^{p,q} \to \cB_{N}^{p,q}\), \(N\neq 0\), restricted to \(\{z\in \bC: \lambda^{-k_0} < \abs{z}\}\) is equal to 
    \[
        \{z\in \bC : \exists k \in \{0,\ldots,k_0\}, \lambda^k z \in \spec{{\left.\cL\right|}_{\ker_N(\dv)}} \}.
    \]
\end{lemma}

\begin{proof}
    In light of Lemma~\ref{lem:spec-induc}, we need only prove that \(z\) in the spectrum implies that \(\lambda^{-1}z\) is also in the spectrum.
    For this we take advantage of the fact, established in Lemma~\ref{lem:ker-W}, that  \(\dw : \cB_{N}^{p,q} \to \cB_{N}^{p,q+1}\) is injective.
    Let \(h \neq 0\) be such that \(\cL h = z h\).
    By Lemma~\ref{lem:XL-LX},
    \[
    \cL (\dw h)=\lambda^{-1} \dw \cL (h) = \lambda^{-1} z\cdot \dw h.
    \]
    Here again we take advantage of the fact that the point spectrum is independent of the Banach space~\cite[Lemma A.1]{BT08}.
    By Lemma~\ref{lem:ker-W},  \(\dw h \neq 0\), so \(\lambda^{-1} z\) is in the spectrum.
\end{proof}

\section{Peripheral spectrum}
\label{sec:inv-dist}

This section is devoted to determining the spectrum of the composition operator when restricted to the kernel of \(\dv\).
The motive for this is that the spectrum of this restricted operator coincides with the part of the spectrum of the full operator which lies outside a particular radius.
In this section we use Assumption~\ref{ass:inv-dist-in-Bpq}.
Let $\ker_N(\dv) = \{h \in  \cB_{N}^{p,q} : \dv h = 0\}$.
We will see that $\ker_N(\dv)$ is finite dimensional.
Moreover we will see that \(\ker_N(\dv)\) does not depend on \(p,q\), so justifying this choice of notation.

Recall that, according to Lemma~\ref{lem:distributions}, the anisotropic spaces are continuously embedded in the space of distributions.
Additionally, by Assumption~\ref{ass:inv-dist-in-Bpq}, 
the space \(\cB_{N}^{p,q}\) is sufficiently large so that we see the ``invariant distributions'' which were identified by Flaminio \& Forni~\cite{FF06}.

\begin{lemma}
    \label{lem:dim-IN}
    For any $N \neq 0$, 
    \(\dim\ker_N(\dv) = K\abs{N}\). 
\end{lemma}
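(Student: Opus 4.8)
The plan is to identify $\ker_N(\dv)$ with the space of $\dv$-invariant distributions in $\cD_N^1$ and then import the dimension count from the work of Flaminio \& Forni~\cite{FF06}. First I would note that by Lemma~\ref{lem:cont-X} the operator $\dv$ extends continuously to $\cB_N^{p,q}$, so $\ker_N(\dv)$ is a closed subspace; by Lemma~\ref{lem:distributions} the inclusion $\iota: \cB_N^{p,q}\to \cD_N^q$ is continuous and injective, and a short computation shows $\iota$ intertwines $\dv$ acting on $\cB_N^{p,q}$ with $V_*$ acting on distributions (integration by parts: $\langle \iota(\dv h), g\rangle = \int_M (\dv h) g = -\int_M h\,(\dv g) = -\langle \iota h, \dv g\rangle$, so $V_*(\iota h) = -\iota(\dv h)$, up to the harmless sign). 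Hence $h \in \ker_N(\dv)$ if and only if $\iota h \in \cD_N^q$ satisfies $V_*(\iota h) = 0$, and $\iota$ is injective, so $\dim \ker_N(\dv)$ equals the dimension of the space of $V_*$-invariant distributions in $\cD_N^q$ that lie in the image of $\iota$.

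Next I would argue that this space of invariant distributions is exactly the full space $\{\cD \in \cD_N^1 : V_*\cD = 0\}$, independently of $p$ and $q$. One inclusion is immediate since $\cD_N^q \subseteq \cD_N^1$ for $q \geq 1$ and, restricting regularity, $\iota(\cB_N^{p,q}) \cap \{V_* = 0\}$ gives invariant distributions of finite order. The reverse inclusion — that every $\dv$-invariant element of $\cD_N^1$ actually lies in the image of $\iota: \cB_N^{p,1}\to\cD_N^1$ — is precisely the content of Assumption~\ref{ass:inv-dist-in-Bpq}, which we are entitled to use. Combining the two, $\ker_N(\dv) \cong \{\cD\in\cD_N^1 : V_*\cD = 0\}$ via $\iota$, and in particular $\ker_N(\dv)$ does not depend on $p,q$, justifying the notation.

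Finally I would invoke the renormalization analysis of Flaminio \& Forni for the Heisenberg nilflow. The vector field $\dv$ generates a flow which, after projecting via $\pi$ to $\bT^2$, is an irrational linear flow (the $\dv$-foliation is minimal, as recorded at the start of Section~\ref{sec:anisotropic}), and the decomposition into the characters $\cC_N^\infty$ corresponds to the decomposition of $L^2(M)$ into isotypical components under the centre. For each nonzero $N$, the component $\cC_N^\infty$ is a direct sum of $K|N|$ copies of an irreducible unitary representation of $\bH$ of central parameter determined by $N$ and $K$ (this multiplicity $K|N|$ is the standard Stone--von Neumann count for the lattice $\Gamma_K$), and Flaminio \& Forni showed that each such irreducible component carries a one-dimensional space of $\dv$-invariant distributions (of the relevant regularity). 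Summing over the $K|N|$ copies gives $\dim\{\cD\in\cD_N^1 : V_*\cD = 0\} = K|N|$, hence $\dim\ker_N(\dv) = K|N|$.

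The main obstacle is the middle step: checking carefully that $\iota$ genuinely intertwines $\dv$ with $V_*$ as operators on the completed spaces (the integration-by-parts identity is formal on $\cC_N^\infty$ and must be promoted to $\cB_N^{p,q}$ by density and the continuity of both $\dv$ and $\iota$), and, more substantively, pinning down the exact regularity class in which the Flaminio--Forni invariant distributions live so that it matches $\cD_N^1$ and the image of $\iota$ under Assumption~\ref{ass:inv-dist-in-Bpq}. The representation-theoretic multiplicity $K|N|$ itself is classical, but one should state precisely which normalization of the central character is being used so that the count is $K|N|$ rather than, say, $|N|$ or $K^2|N|$.
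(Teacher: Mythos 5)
Your proposal is correct and follows essentially the same route as the paper's proof: identify $\ker_N(\dv)$ with the space of $\dv$-invariant distributions via $\iota$, using Lemma~\ref{lem:distributions} for one inclusion and Assumption~\ref{ass:inv-dist-in-Bpq} for the other, then quote the dimension count $K|N|$ from Flaminio \& Forni~\cite[Proposition~4.4]{FF06}. The paper is terser — it simply cites the Flaminio--Forni proposition without unpacking the Stone--von Neumann multiplicity argument, and does not dwell on the sign in the intertwining of $\dv$ with $V_*$ — but the structure of the argument is identical.
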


\begin{proof}
    Let \(\cJ_N\) denote the space of \(\dv\)-invariant distributions, i.e., the set of \(D \in \cC^\infty(M)'\) such that \(D(\dv h) = 0\) for all \(h \in \cC^\infty\) and supported on \(\cC^\infty_N\).
    
    We know  that \(\iota (\ker_N(\dv)) \subset \cJ_N\) (Lemma~\ref{lem:distributions}) and that  \(\cJ_N \subset \iota (\ker_N(\dv))\) (Assumption~\ref{ass:inv-dist-in-Bpq}). It is known~\cite[Proposition 4.4]{FF06} that \(\dim\cJ_N = K\abs{N} \) and consists of distributions of Sobolev order \(\frac{1}{2}\).
    Consequently, \(\iota (\ker_N(\dv)) = \cJ_N\) and we prove the statement.
\end{proof}

\begin{lemma}
    \label{lem:bounded}
    There exists \(C > 0\) such that, for all \(n\in \bN\), \(h \in \ker_N(\dv)\),
    \[
        C^{-1} \norm{h}_{p,q} \leq \lambda^{-n/2}\norm{\cL^n h}_{p,q} \leq C \norm{h}_{p,q}.
    \]
\end{lemma}

\begin{proof}
    It is known \cite[Proposition 4.8]{FF06} that \(\lambda^{-1/2} \cL\) is an isometry on \(\ker_N(\dv)\) with respect to the Sobolev norm \(\norm{\cdot}_{s}\) used in that reference.
    I.e., for all \(n\in \bN\), \(h \in \ker_N(\dv)\), \(\norm{\cL^n h}_{s} = \lambda^{n/2}\norm{h}_{s}\).
    Since \(\ker_N(\dv)\) is finite dimensional (Lemma~\ref{lem:dim-IN}), all norms are equivalent, and so there exists \(C>0\) such that, for all \(n\in \bN\), \(h \in \ker_N(\dv)\), \(C \norm{\cL^n h}_{s} \leq  \norm{\cL^n h}_{p,q} \leq C^{-1} \norm{\cL^n h}_{s}\).
    Hence \(C \lambda^{n/2} \norm{h}_{s} \leq  \norm{\cL^n h}_{p,q} \leq C^{-1} \lambda^{n/2}  \norm{h}_{s}\).
\end{proof}

\begin{remark}
    \label{rem:norm-choice}
    The proof of Lemma~\ref{lem:bounded} relies heavily on the reference where the operator is shown to be an isomorphism with respect to a Sobolev norm.
    That norm is very convenient for this particular estimate whereas trying to obtain such an estimate with the present anisotropic norm appears difficult or impossible.
    On the other hand, much of the present work in the previous section is easier using the anisotropic norm.
\end{remark}

\begin{lemma}
    \label{lem:outer-spec}
    There exists a set of \(K\abs{N}\) unit complex numbers \(\{\mu_j\}_{j=1}^{K\abs{N}}\) such that the spectrum of \({\left.\cL\right|}_{\ker_N(\dv)}\) is 
    \(\{\lambda^{-\frac{1}{2}}\mu_{j}\}_{j=1}^{K\abs{N}}\), repeated according to algebraic multiplicity.
\end{lemma}

\begin{proof}
    The spectrum of \(\lambda^{-1/2} {\left.\cL\right|}_{\ker_N(\dv)}\) is contained in 
    \(\{z\in \bC:\abs{z} = 1\}\).
    In order to show this, suppose for sake of contradiction that there exists \(z\) in the spectrum such that \(\abs{z} \neq 1\).
    If \(h\) is a normalised eigenvector associated to \(z\), then \(\lambda^{-n/2}{\|\cL^n h\|}_{p,q} = \abs{z}^n\). However, for large \(n\), this contradicts Lemma~\ref{lem:bounded}.
    
    Suppose, again for sake of contradiction that there is a Jordan block. 
    Then, in Jordan normal form, the operator is written as a matrix which includes a \(k \times k\), \(k \geq 2\) block with diagonal \(\mu\) with \(\abs{\mu}=1\) and the upper diagonal entries equal to \(1\).
    The \(n\)\textsuperscript{th} iterate of this block will have diagonal entries equal to \(\mu^n\) and upper diagonal entries equal to \(n\mu^{n-1}\).
    In particular, the max norm of this matrix grows linearly. 
    Since norms are equivalent in a finite dimensional space, this again  contradicts Lemma~\ref{lem:bounded}. 

    That the sum of the algebraic multiplicities of the eigenspaces is equal to \(K\abs{N}\) follows from Lemma~\ref{lem:dim-IN}.
\end{proof}

\appendix
\section{Partially hyperbolic automorphisms}
\label{sec:examples}

In this section we discuss the explicit form of partially hyperbolic automorphisms.
For our present purposes we say that an automorphism \(\Phi : M \to M\) is \emph{partially hyperbolic with neutral centre} if the tangent bundle admits a splitting \(TM = E_s \oplus E_c \oplus E_u\) into one dimensional sub-bundles such that
\begin{itemize}
    \item \(\Phi\) is uniformly contracting on \(E_s\) and uniformly expanding on \(E_u\);
    \item \(E_c\) corresponds to \(\dz\) and \(\Phi\) is an orientation-preserving isometry on this bundle.
\end{itemize}

We recall the following result which describes the structure of such automorphisms on Heisenberg nilmanifolds.

\begin{lemma}[Shi~\cite{Shi14a,Shi14b}]
    \label{lem:auto-form}
    Let \(M = \lquotient{\Gamma_1}{\bH}\).
    The function \(\Phi : M \to M\) is an automorphism, partially hyperbolic with neutral centre, if and only if, it has the form,
    \(\Phi = \exp \circ \, \phi \circ \exp^{-1}\) where
    \begin{equation}
        \label{eq:lie-auto}
        \phi = \begin{pmatrix}
            a                    & b                 & 0 \\
            c                    & d                 & 0 \\
            \tfrac{ac}{2} + \ell & \tfrac{bd}{2} + m & 1
        \end{pmatrix}
    \end{equation}
    for some \(a,b,c,d,\ell,m \in \bZ\) such that \(\det \left(\begin{smallmatrix} a & b \\ c & d \end{smallmatrix}\right) = 1\) and \(\left(\begin{smallmatrix} a & b \\ c & d \end{smallmatrix}\right)\) has eigenvalues \(\lambda, \lambda^{-1}\) for some \(\lambda>1\).
    Equivalently,
    \begin{equation}
        \label{eq:def-Phi}
        \begin{aligned}
            \Phi (x,y,z) & = \left( ax + by, cx + dy, z + \tau(x,y) \right),                                                                        \\
            \tau(x,y)    & = \tfrac{ac}{2} x^2 + bc xy + \tfrac{bd}{2} y^2 + \left(\tfrac{ac}{2} + \ell\right)x +  \left(\tfrac{bd}{2} + m\right)y.
        \end{aligned}
    \end{equation}
\end{lemma}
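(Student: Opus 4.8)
The plan is to reduce the classification to one about automorphisms of the Heisenberg Lie algebra that preserve a lattice, and then carry out the arithmetic that forces the half-integer entries.

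\textbf{Step 1 (reduction to the Lie algebra).} Since $\bH$ is simply connected and nilpotent and $\Gamma_1$ is a lattice, an automorphism of $M=\lquotient{\Gamma_1}{\bH}$ is (by definition, or by the standard lifting for nilmanifold automorphisms) covered by a unique Lie group automorphism $\Psi\colon\bH\to\bH$ with $\Psi(\Gamma_1)=\Gamma_1$, and $\Psi=\exp\circ\,\phi\circ\exp^{-1}$ for a unique linear automorphism $\phi$ of the Lie algebra. Because $\Psi$ is a homomorphism, $d\Psi$ equals $\phi$ in the left-invariant frame $\{\dv,\dw,\dz\}$ at every point, so the dynamical requirements on $\Phi$ become requirements on the single matrix $\phi$. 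It therefore remains to characterise those $\phi$ which (i) preserve the lattice $L:=\exp^{-1}(\Gamma_1)$ and (ii) realise the prescribed hyperbolic splitting.

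\textbf{Step 2 (the lattice and the form of $\phi$).} A Baker--Campbell--Hausdorff computation (the group is two-step nilpotent) gives $\exp(x\dv+y\dw+z\dz)=(x,y,z+\tfrac12 xy)$, hence
\[
    L=\bigl\{\,m\dv+n\dw+s\dz : m,n\in\bZ,\ s\in\tfrac12 mn+\bZ\,\bigr\}.
\]
Every linear automorphism of the algebra preserves the centre $\bR\dz$ (the derived subalgebra), acting on it by the determinant of the induced map $\bar\phi$ on the abelianisation; the requirement that $\Phi$ be an \emph{orientation-preserving} isometry on $E_c$ (the $\dz$-direction) forces $\phi\dz=\dz$, i.e.\ $\det\bar\phi=+1$. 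Preservation of $L$ forces $\bar\phi=\left(\begin{smallmatrix}a&b\\c&d\end{smallmatrix}\right)\in \mathrm{SL}(2,\bZ)$, so $\phi\dv=a\dv+c\dw+\alpha\dz$ and $\phi\dw=b\dv+d\dw+\beta\dz$ for some $\alpha,\beta\in\bR$. Since $\dv,\dw\in L$ we need $\phi\dv,\phi\dw\in L$, which forces $\alpha\in\tfrac{ac}2+\bZ$ and $\beta\in\tfrac{bd}2+\bZ$; writing $\alpha=\tfrac{ac}2+\ell$, $\beta=\tfrac{bd}2+m$ with $\ell,m\in\bZ$ gives exactly \eqref{eq:lie-auto}. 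Conversely, for such a $\phi$ one checks $\phi(L)\subseteq L$: for $m\dv+n\dw+s\dz\in L$ the $\dz$-coefficient of its image lies in $\tfrac12(m\,ac+n\,bd+mn)+\bZ$, and this coincides with the class $\tfrac12(ma+nb)(mc+nd)+\bZ$ because $m^2\equiv m$, $n^2\equiv n$ and $ad+bc\equiv ad-bc=1\pmod 2$. Applying this to $\phi^{-1}$, which has the same shape, yields $\phi(L)=L$, so $\Psi$ descends to an automorphism of $M$.

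\textbf{Step 3 (the hyperbolic splitting and coordinates).} The matrix $\phi$ leaves $\bR\dz$ invariant and acts on the quotient by $\bar\phi$, so its eigenvalues are $1$ on the centre line $E_c=\bR\dz$ together with the two eigenvalues of $\bar\phi$; the stable and unstable sub-bundles of $\Phi$ are the left-translates of the $\phi$-eigenlines sitting over the eigenvectors of $\bar\phi$, and in a left-invariant metric the contraction, the expansion and the centre isometry are all uniform. Hence $\Phi$ is partially hyperbolic with neutral centre precisely when $\bar\phi$ is hyperbolic, which for an element of $\mathrm{SL}(2,\bZ)$ means real eigenvalues of modulus $\neq 1$; normalising, these are $\lambda,\lambda^{-1}$ with $\lambda>1$. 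Finally, expanding $\Phi=\exp\circ\,\phi\circ\exp^{-1}$ back into group coordinates with the same BCH formula produces the explicit expressions \eqref{eq:def-Phi} for $\Phi(x,y,z)$ and for $\tau(x,y)$.

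I expect the main obstacle to be Step 2: identifying $L$ correctly with its half-integer congruence, and then verifying that the candidate $\phi$ preserves \emph{all} of $L$ rather than merely mapping a generating set into $L$ — this is where $ad-bc=1$ enters modulo $2$. A secondary point requiring care is keeping the BCH normalisation consistent so that the lattice description and the formula for $\tau$ agree, and pinning down the sign of the eigenvalues from the orientation data on $E_s\oplus E_u$.
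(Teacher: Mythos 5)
The paper offers no proof of this lemma; it is quoted with attribution to Shi~\cite{Shi14a,Shi14b}, so there is no argument in the paper to compare against. Your proof is a correct and self-contained reconstruction along the standard lines (Malcev rigidity to reduce to a lattice-preserving Lie algebra automorphism, then arithmetic of the half-integer congruence), and the key computation — that $\phi(L)\subseteq L$ reduces, after using $m^2\equiv m$, $n^2\equiv n$, to $mn(1-ad-bc)\equiv 0\pmod 2$, which follows from $ad-bc=1$ — is exactly right, as is the recovery of the explicit formula for $\tau$ via the BCH identification $\exp(x\dv+y\dw+z\dz)=(x,y,z+\tfrac12 xy)$.

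There is one step you assert but do not verify: that $\phi^{-1}$ ``has the same shape.'' The set $L=\exp^{-1}(\Gamma_1)$ is not an additive subgroup of the Lie algebra (e.g.\ $\dv,\dw\in L$ but $\dv+\dw\notin L$), so one cannot conclude $\phi(L)=L$ from $\phi(L)\subseteq L$ and $\det\phi=1$ by the usual lattice-index argument in the vector-space sense. Your route — showing $\phi^{-1}(L)\subseteq L$ — is correct, but you must actually check that $\phi^{-1}\dv$ and $\phi^{-1}\dw$ satisfy the half-integer condition. Writing $\phi^{-1}\dv = d\dv - c\dw + \alpha'\dz$ with $\alpha' = -\alpha d + \beta c$ and substituting $\alpha=\tfrac{ac}{2}+\ell$, $\beta=\tfrac{bd}{2}+m$, one finds $\alpha'\equiv\tfrac12 cd(b-a)\pmod{\bZ}$, and membership in $L$ requires $cd(b-a+1)\equiv 0\pmod 2$; this again follows from $ad-bc=1$ (if $c,d$ are both odd, then $a\not\equiv b\pmod 2$). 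A cleaner alternative avoiding this computation: once $\phi(L)\subseteq L$, the group automorphism $\Psi$ maps $\Gamma_1$ into itself, and since $\det\phi=1$ means $\Psi$ preserves Haar measure on $\bH$, the covolumes of $\Gamma_1$ and $\Psi(\Gamma_1)$ coincide, forcing $[\Gamma_1:\Psi(\Gamma_1)]=1$. Finally, as you note, the sign normalisation (eigenvalues $\lambda,\lambda^{-1}$ with $\lambda>1$ rather than $-\lambda,-\lambda^{-1}$) is a convention implicit in the paper's definition~\eqref{eq:part-hyp} rather than something forced by ``partially hyperbolic with neutral centre'' alone; the statement of the lemma is silently adopting it.
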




Let \(\Phi : M \to M\) be a partially hyperbolic automorphism as above.
I.e., \(a,b,c,d,\ell,m \in \bZ\) are fixed in such a way that they satisfy the requirements detailed in Lemma~\ref{lem:auto-form}.
Let \(\left( \begin{smallmatrix} \alpha \\ \beta \end{smallmatrix}\right)\), \(\left( \begin{smallmatrix} \beta \\ -\alpha \end{smallmatrix}\right)\) be the normalised eigenvectors of \(\left(\begin{smallmatrix} a & b \\ c & d \end{smallmatrix}\right) \).
Without loss of generality, we suppose that \(\alpha, \beta > 0\) are such that \(\alpha^2 + \beta^2 = 1\) and
\[
    \left(\begin{smallmatrix} a & b \\ c & d \end{smallmatrix}\right) \left( \begin{smallmatrix} \alpha \\ \beta \end{smallmatrix}\right) = \lambda \left( \begin{smallmatrix} \alpha \\ \beta \end{smallmatrix}\right),
    \quad
    \left(\begin{smallmatrix} a & b \\ c & d \end{smallmatrix}\right) \left( \begin{smallmatrix} \beta \\ -\alpha \end{smallmatrix}\right) = \lambda^{-1} \left( \begin{smallmatrix} \beta \\ -\alpha \end{smallmatrix}\right).
\]
Let
\begin{equation}\label{eq:frame}
    V = \alpha X + \beta Y + \gamma Z, \quad
    W = -\beta X + \alpha Y + \gamma' Z
\end{equation}
where
\[
    \begin{aligned}
        \gamma  & = \tfrac{1}{\lambda - 1} \left(\alpha(\tfrac{ac}{2} + \ell) + \beta(\tfrac{bd}{2} + m)\right),      \\
        \gamma' & = \tfrac{1}{1 - \lambda^{-1}} \left(\beta(\tfrac{ac}{2} + \ell) - \alpha(\tfrac{bd}{2} + m)\right).
    \end{aligned}
\]

\begin{lemma}
    Let \(V\) and \(W\) be elements of Lie algebra of $\bH$, defined as above.
    Then \(\{V, W, Z \}\) is a Heisenberg frame, which satisfies the commutation relation \eqref{eq:comm}.
    Moreover,
    \[
        \Phi_{*} V = \lambda^{-1} V,
        \quad
        \Phi_{*} W = \lambda W.
    \]
\end{lemma}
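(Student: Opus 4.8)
The plan is to treat the two assertions of the lemma separately: that $\{V,W,Z\}$ obeys the Heisenberg bracket relations \eqref{eq:comm}, which is a short bilinear computation in the Lie algebra, and that $V,W$ are eigenvectors of $\Phi_{*}$, which reduces to applying the linearisation $\phi$ of \eqref{eq:lie-auto} to the explicit vectors in \eqref{eq:frame}.

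First I would check the frame relations. Since $Z$ is central for the bracket \eqref{eq:comm}, $[V,Z]=[W,Z]=0$ is immediate from the form of $V,W$ in \eqref{eq:frame}. For $[V,W]$ I would expand bilinearly: every term containing $Z$ vanishes by centrality, the terms $[X,X]$ and $[Y,Y]$ vanish, and what remains is $\alpha^{2}[X,Y]-\beta^{2}[Y,X]=(\alpha^{2}+\beta^{2})[X,Y]$, which equals $Z$ by the normalisation $\alpha^{2}+\beta^{2}=1$ together with \eqref{eq:comm}. Hence $\{V,W,Z\}$ is a Heisenberg frame.

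Next I would establish the eigenvalue relations. By Lemma~\ref{lem:auto-form} we may write $\Phi=\exp\circ\,\phi\circ\exp^{-1}$ with $\phi$ as in \eqref{eq:lie-auto}; since $\exp$ is a diffeomorphism of the simply connected nilpotent group $\bH$ with $d\exp_{0}=\id$, the induced action of $\Phi$ on left-invariant vector fields, identified with the Lie algebra, is exactly $\phi$. Reading the columns of \eqref{eq:lie-auto}, $\phi(X)=aX+cY+(\tfrac{ac}{2}+\ell)Z$, $\phi(Y)=bX+dY+(\tfrac{bd}{2}+m)Z$ and $\phi(Z)=Z$. Substituting $V=\alpha X+\beta Y+\gamma Z$, the $(X,Y)$-part of $\phi(V)$ is $\left(\begin{smallmatrix}a&b\\c&d\end{smallmatrix}\right)\left(\begin{smallmatrix}\alpha\\\beta\end{smallmatrix}\right)$, which by the eigenvector equation is a scalar multiple of $(\alpha,\beta)$, while the $Z$-coefficient of $\phi(V)$ is $\alpha(\tfrac{ac}{2}+\ell)+\beta(\tfrac{bd}{2}+m)+\gamma$; requiring $\phi(V)$ to be the matching scalar multiple of $V$ is precisely the scalar identity $\gamma(\lambda-1)=\alpha(\tfrac{ac}{2}+\ell)+\beta(\tfrac{bd}{2}+m)$, i.e.\ the definition of $\gamma$ in \eqref{eq:frame}. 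The computation for $W=-\beta X+\alpha Y+\gamma'Z$ is identical with the other eigenvector and reduces to $\gamma'(1-\lambda^{-1})=\beta(\tfrac{ac}{2}+\ell)-\alpha(\tfrac{bd}{2}+m)$, the definition of $\gamma'$. Combining this with \eqref{eq:part-hyp} and \eqref{eq:def-Phi} yields the stated relations $\Phi_{*}V=\lambda^{-1}V$, $\Phi_{*}W=\lambda W$.

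I do not expect a genuine obstacle here; the one place needing care is the bookkeeping in the previous paragraph — being consistent about whether the matrix in \eqref{eq:lie-auto} acts on columns or rows, which of $V,W$ is paired with which eigenvalue $\lambda^{\pm1}$ of $\left(\begin{smallmatrix}a&b\\c&d\end{smallmatrix}\right)$, and the precise relation between $\Phi_{*}$ on vector fields and the linear map $\phi$ (equivalently, the Koopman identity of Lemma~\ref{lem:XL-LX}). Once these conventions are pinned down, the entire content of the lemma is that the $Z$-shears $\gamma,\gamma'$ in \eqref{eq:frame} are the unique solutions of the two scalar ``cohomological'' equations above, chosen exactly so that $V$ and $W$ become eigenvectors of $\Phi_{*}$ without disturbing the bracket relations \eqref{eq:comm}.
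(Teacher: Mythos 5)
Your overall approach — a direct bilinear check of the bracket relations, followed by applying the linearisation $\phi$ to the explicit vectors $V,W$ — is the same as the paper's (the paper's proof is a two-sentence version of exactly this). The bracket computation is correct: centrality of $Z$ together with $\alpha^{2}+\beta^{2}=1$ gives $[V,W]=Z$, and $[V,Z]=[W,Z]=0$ is immediate.

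However, there is a genuine gap in the eigenvalue part, and it is precisely at the place you flagged as ``bookkeeping'' but did not resolve. You correctly establish that, for left-invariant vector fields, $\Phi_{*}$ is represented on the Lie algebra by $\phi=d\Phi_{e}$. Your computation then shows, unambiguously, that with the paper's assignment $\left(\begin{smallmatrix}a&b\\c&d\end{smallmatrix}\right)\left(\begin{smallmatrix}\alpha\\\beta\end{smallmatrix}\right)=\lambda\left(\begin{smallmatrix}\alpha\\\beta\end{smallmatrix}\right)$ and the given $\gamma$, one has $\phi(V)=\lambda V$ (the $(\lambda-1)$ in the defining identity for $\gamma$ forces the scalar to be $\lambda$, not $\lambda^{-1}$); similarly $\phi(W)=\lambda^{-1}W$. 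Having identified $\Phi_{*}$ with $\phi$, this yields $\Phi_{*}V=\lambda V$ and $\Phi_{*}W=\lambda^{-1}W$, which is the \emph{opposite} of the lemma's conclusion. The sentence ``combining this with \eqref{eq:part-hyp} and \eqref{eq:def-Phi} yields the stated relations'' does not repair this: \eqref{eq:part-hyp} is the very property the appendix is supposed to construct, so invoking it here is circular, and \eqref{eq:def-Phi} is just the coordinate form of the same $\phi$ you already used. To close the proof as stated you would need either to swap the roles of $V$ and $W$ (pair $V$ with the $\lambda^{-1}$-eigenvector), or to establish some non-standard convention under which $\Phi_{*}$ on left-invariant fields corresponds to $\phi^{-1}$ rather than $\phi$ — and neither of these is done. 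As written, the proof derives one set of eigenvalue relations and asserts the inverted set, so this step would fail under scrutiny; the ``care in bookkeeping'' you anticipated is exactly where the argument currently breaks.
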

\begin{proof}
    The commutation relations follow from the commutation relations for \(X,Y,Z\), together with the chosen normalization.
    The two equalities are verified using the matrix form  of the automorphism of the Lie algebra~\eqref{eq:lie-auto}.
\end{proof}

\section*{Acknowledgements}

\begin{small}
    We are grateful to Giovanni Forni and Carlangelo Liverani for several enlightening discussions.
    We are particularly grateful to Lucia Simonelli who, several years ago, searched out details, figured out the calculations and explained much of this material to us. 
    M.K. also thanks Dalia Terhesiu for several helpful comments and conversations about the draft.
    We greatly appreciate the referees for their careful reading of the first version and precise suggestions which have drastically improved the quality of this manuscript.

    The work was partially supported by PRIN Grant ``Regular and stochastic behaviour in dynamical systems" (PRIN 2017S35EHN).
    O.B. acknowledges the MIUR Excellence Department Projects awarded to the Department of Mathematics, University of Rome Tor Vergata, CUP E83C18000100006, CUP E83C2300033000s6.
    M.K. was partially supported by UniCredit Bank R\&D group through the ``Dynamics and Information Theory Institute'' at the Scuola Normale Superiore, foundations for ``The Royal Swedish Academy of Sciences'' (MA2024-0038, MG2024-0013), and Carl Trygger's Foundation for Scientific Research, CTS 23:3036.

    This research is part of the O.B.'s activity within the UMI
    Group ``DinAmicI'' and the INdAM group GNFM. 
    M.K. acknowledges the Center of Excellence ``Dynamics, mathematical analysis and artificial intelligence'' at the Nicolaus Copernicus University in Toruń and University of Rome Tor Vergata for their hospitality.
\end{small}

\section*{Statements and Declarations}

\begin{small}
   The authors have no relevant financial or non-financial interests to disclose.
   Data sharing is not applicable to this article as no datasets were generated or analysed during the current study.
\end{small}



\begin{thebibliography}{10}

    \bibitem{Adam17}
    A.~Adam.
    Generic non-trivial resonances for Anosov diffeomorphisms.
    \emph{Nonlinearity} 30, 1146--1164 (2017).

    \bibitem{AB22}
    A.~Adam \& V.~Baladi.
    Horocycle averages on closed manifolds and transfer operators.
    \emph{Tunis. J. Math.}, 4 387--441 (2022).
    
    \bibitem{ABV16}
    V.~Araújo, O.~Butterley \& P.~Varandas.
    Open sets of axiom A flows with exponentially mixing attractors.
    \emph{Proc. Amer. Math. Soc.} 144, 7 2971--2984 (2016).
    Erratum: \emph{Proc. Amer. Math. Soc.} 146, 11 5013--5014 (2018).

    \bibitem{AGH63}
    Auslander, Louis, F. Hahn, and L. Green.
    Flows on homogeneous spaces.
    No. 53. Princeton University Press, (1963).

    \bibitem{Baladi00}
    V.~Baladi.
    Positive Transfer Operators and Decay of Correlation.
    Singapore, World Scientific (2000).

    \bibitem{BT07}
    V.~Baladi \& M.~Tsujii.
    Anisotropic Hölder and Sobolev spaces for hyperbolic diffeomorphisms.
    \emph{Ann. Inst. Fourier, Grenoble} 57, 127--54 (2007).

    \bibitem{BT08}
    V.~Baladi \& M.~Tsujii.
    Dynamical determinants and spectrum for hyperbolic diffeomorphisms.
    In: K.~Burns, D.~Dolgopyat, Ya.~Pesin (eds.) Probabilistic and Geometric Structures in Dynamics, 29--68,
    \emph{Contemp. Math., 469}, Amer. Math. Soc., Providence, RI (2008).

    \bibitem{Baladi22}
    V.~Baladi.
    There are no deviations for the ergodic averages of Giulietti-Liverani horocycle flows on the two-torus.
    \emph{Ergodic Th. Dynam. Sys.} 42, 500--513 (2022).

    \bibitem{Baladi16}
    V.~Baladi.
    Dynamical zeta functions and dynamical determinants for hyperbolic maps.
    Berlin, Springer (2016).

    \bibitem{Baladi17}
    V.~Baladi.
    The quest for the ultimate anisotropic Banach space.
    \emph{J. Stat. Phys.} 166, 525--57 (2017).

    \bibitem{BJS17}
    O.F.~Bandtlow, W.~Just \& J.~Slipantschuk.
    Spectral structure of transfer operators for expanding circle maps.
    \emph{Ann. Inst. H. Poincaré Anal. Non Linéaire} 34, 31--43 (2017).

    \bibitem{BN19}
    O.F.~Bandtlow \& F.~Naud.
    Lower bounds for the Ruelle spectrum of analytic expanding circle maps.
    \emph{Ergodic Theory Dynam. Systems} 39, 289--310 (2019).

    \bibitem{BKL02}
    M.~Blank, G.~Keller \& C.~Liverani.
    Ruelle-Perron-Frobenius spectrum for Anosov maps.
    \emph{Nonlinearity} 15, 1905--1973 (2002).

    \bibitem{Bowen75}
    R.~Bowen.
    Equilibrium states and the ergodic theory of Anosov diffeomorphisms.
    \emph{Lecture Notes in Math.,} Vol. 470. 
    Springer-Verlag, Berlin-New York, (1975).

    \bibitem{BL07}
    O.~Butterley \& C.~Liverani.
    Smooth Anosov flows: correlation spectra and stability.
    \emph{J. Mod. Dyn.} 1, 2 301--322 (2007).

    \bibitem{BL13}
    O.~Butterley \& C.~Liverani.
    Robustly invariant sets in fiber contracting bundle flows.
    \emph{J. Mod. Dyn.} 7, 2 255--267 (2013).

    \bibitem{Butterley16}
    O.~Butterley.
    A note on operator semigroups associated to chaotic flows.
    \emph{Ergodic Theory Dynam. Systems} 36, 5 1396--1408 (2016).

    \bibitem{BE17}
    O.~Butterley \& P.~Eslami.
    Exponential mixing for skew products with discontinuities.
    \emph{Trans. Amer. Math. Soc.} 369, 2 783--803 (2017).

    \bibitem{BW20}
    O.~Butterley \& K.~War.
    Open Sets of Exponentially Mixing Anosov Flows
    \emph{J. Eur. Math. Soc.}, 22 2253-2285 (2020). 

    \bibitem{BKL22}
    O.~Butterley, N.~Kiamari \& C.~Liverani.
    Locating Ruelle-Pollicott resonances. \emph{Nonlinearity} 35, 1, 513 (2022).

    \bibitem{BS20}
    O.~Butterley \& L.D.~Simonelli.
    Parabolic flows renormalized by partially hyperbolic maps.
    \emph{Bollettino dell'Unione Matematica Italiana}, 1--20 (2020).

    \bibitem{BCC24}
    O.~Butterley, G.~Canestrari \& R.~Castorrini.
    Discontinuities cause essential spectrum on surfaces.
    \emph{Ann. Henri Poincaré}, 26, 3075--3101 (2025).

    \bibitem{BCJ22}
    O.~Butterley, G.~Canestrari \& S.~Jain.
    Discontinuities cause essential spectrum.
    \emph{Commun. Math. Phys.}, 398, 627--653 (2022).

    \bibitem{CL22} 
    R.~Castorrini \& C.~Liverani.
    \emph{Quantitative statistical properties of two-dimensional partially hyperbolic systems.}  
    \emph{Adv. Math., 409, Part A,} 1--122 (2022).

    \bibitem{CFS82}
    I.P.~Cornfeld, S.V.~Fomin \& Ya.G.~Sinai.
    \emph{Ergodic theory. Translated from the Russian by A. B. Sosinskii.} 
    Grundlehren der Mathematischen Wissenschaften [Fundamental Principles of Mathematical Sciences], 245. Springer-Verlag, New York, (1982).

    \bibitem{CF15}
    S.~Cosentino \& L.~Flaminio.
    Equidistribution for higher-rank Abelian actions on Heisenberg nilmanifolds.
    \emph{Journal of Modern Dynamics} 9, 305--353 (2015).

    \bibitem{DKL21}  
    M.~Demers, N.~Kiamari \& C.~Liverani.  
    Transfer operators in Hyperbolic Dynamics: An introduction. 
    \emph{33 Colloquio Brasilero de Matematica. Editora do IMPA.} ISBN 978-65-89124-26-9 (2021).

    \bibitem{Dolgopyat98}
    D.~Dolgopyat.
    On decay of correlations in Anosov flows.
    \emph{Ann. of Math. (2)} 147, 357--390 (1998).

    \bibitem{DZ16}
    S.~Dyatlov \& M.~Zworski.
    \newblock Dynamical zeta function for Anosov flows via microlocal analysis.
    \newblock \emph{Annales de l'ENS}, 49, 543--577 (2016).

    \bibitem{DZ17}
    S.~Dyatlov \& M.~Zworski.
    \newblock Ruelle zeta function at zero for surfaces.
    \newblock \emph{Invent. Math.}, 210, 211--229 (2017).

    \bibitem{EG15}
    L.C.~Evans \& R.F.~Gariepy.
    Measure Theory and Fine Properties of Functions, Revised Edition.
    \emph{Chapman and Hall/CRC.} (2015).

    \bibitem{Faure07}
    F.~Faure.
    Prequantum chaos: resonances of the prequantum cat map.
    \emph{J.\,Mod.\,Dyn.} 1, 255--285 (2007).

    \bibitem{FGL19}
    F.~Faure, S.~Gouëzel \& E.~Lanneau.
    Ruelle spectrum of linear pseudo-Anosov maps.
    \emph{J.\,l'École polytechnique -- Mathématiques} 6, 811--877 (2019).

    \bibitem{FRS08}
    F.~Faure, N.~Roy \& J.~Sjöstrand. 
    Semi-classical approach for Anosov diffeomorphisms and Ruelle resonances. 
    \emph{Open Math. J.} 1, 35--81 (2008).

    \bibitem{FT13}
    F.~Faure \& M.~Tsujii.
    Band structure of the Ruelle spectrum of contact Anosov flows.
    \emph{C. R. Math. Acad. Sci. Paris} 351 385--91 (2013).

    \bibitem{FT15}
    F.~Faure \& M.~Tsujii.
    Prequantum transfer operator for symplectic Anosov diffeomorphism.
    \emph{Astérisque} v.375 Société Mathématique de France (2015).

    \bibitem{FT17}
    F.~Faure \& M.~Tsujii.
    \newblock The semiclassical zeta function for geodesic flows on negatively curved manifolds.
    \newblock \emph{Invent. Math.} 208, 851--998 (2017).

    \bibitem{FF06}
    L.~Flaminio, G.~Forni.
    Equidistribution of nilflows and applications to theta sums.
    \emph{Ergodic Th. Dynam. Sys.}, 26, 409--433 (2006).

    \bibitem{FF23}
    L.~Flaminio \& G.~Forni. 
    Equidistribution of nilflows and bounds on Weyl sums. 
    \emph{Preprint arXiv:2302.03618} (2023).

    \bibitem{Folland89}
    G.B.~Folland.
    Harmonic analysis in phase space.
    \emph{Annals of Mathematics Studies}, Princeton University Press (1989).

    \bibitem{Forni16}
    G.~Forni.
    Effective equidistribution of nilflows and bounds on Weyl sums.
    \emph{Dynamics and analytic number theory} 437, 136 (2016).

    \bibitem{Forni20}
    G.~Forni.
    Ruelle resonances from cohomological equations.
    \emph{Teichmüller theory and dynamics, Panor. Synthèses}, {58}, {47--75}, Soc. Math. France, Paris (2022).

    \bibitem{Forni22}
    G.~Forni.
    On the equidistribution of unstable curves for pseudo-Anosov diffeomorphisms of compact surfaces.
    \emph{Ergodic Theory Dynam. Systems} 42, 3 855--880 (2022).

    \bibitem{Galli24}
    D.~Galli. 
    A cohomological approach to Ruelle-Pollicott resonances and speed of mixing of Anosov diffeomorphisms.
    \emph{Preprint arXiv:2405.17045} (2024).
    
    \bibitem{GL19}
    P.~Giulietti \& C.~Liverani.
    Parabolic dynamics and anisotropic Banach spaces.
    \emph{J.\,Eur.\,Math.\,Soc.} 21, 9 2793--2858 (2019).

    \bibitem{GL06}
    S.~Gouëzel \& C.~Liverani.
    Banach spaces adapted to Anosov systems.
    \emph{Ergodic Th. Dynam. Sys.}, 26, 189--217 (2006).

    \bibitem{GL08}
    S.~Gouëzel \& C.~Liverani.
    Compact locally maximal hyperbolic sets for smooth maps: fine statistical properties.
    \emph{J. Diff. Geometry} 79, 433--477 (2008).

    \bibitem{GGHW24}  
    Y.~Guedes Bonthonneau, C.~Guillarmou, J.~Hilgert \& T.~Weich.
    Ruelle-Taylor resonances of Anosov actions.
    \emph{To appear in J. Eur. Math. Soc.} (2024).


    \bibitem{GF18}
    C.~Guillarmou \& F.~Faure.
    Horocyclic invariance of Ruelle resonant states for contact Anosov flows in dimension 3.
    \emph{Math. Res. Lett.} 25, 1405--1427 (2018).

    \bibitem{GHW16}
    C.~Guillarmou, J.~Hilgert \& T.~Weich.
    Classical and quantum resonances for hyperbolic surfaces.
    \emph{Mathematische Annalen} 370, 1231--1275 (2016).

    \bibitem{Hammerlindl13} 
    A.~Hammerlindl,
    Partial hyperbolicity on 3-dimensional nilmanifolds.
    \emph{Discrete Contin. Dyn. Syst.} 33, 8: 3641-3669 (2013).

    \bibitem{Hennion93}
    H.~Hennion.
    Sur un théorem spectral et son application aux noyaux Lipchitziens.
    \emph{Proc. Amer. Math. Soc.}, 118, 627--634 (1993).

    \bibitem{Keller98}
    G.~Keller.
    Equilibrium States in Ergodic Theory. Cambridge University Press (1998).
  
    \bibitem{KR04}
    G.~Keller \& H.H.~Rugh.
    {Eigenfunctions for smooth expanding circle maps.}
    \emph{Nonlinearity} 17, 1723--1730 (2004).

    \bibitem{Kim22a}
    M.~Kim.
    Limit theorems for higher rank actions on Heisenberg nilmanifolds.
    \emph{Discrete Contin. Dyn. Syst.}, 42, 4347--4383 (2022).

    \bibitem{Kim22b}
    M.~Kim. 
    Effective equidistribution for generalized higher-step nilflows.
    \emph{Ergodic Theory Dynam. Systems} 42, 12 3656--3715 (2022).

    \bibitem{KW20}
    B.~Küster \& T.~Weich.
    Pollicott-Ruelle resonant states and Betti numbers.
    \emph{Comm. Math. Phys. 378}, 917--941 (2020).

    \bibitem{LY73}
    A.~Lasota \& J.~Yorke.
    On the existence of invariant measures for piecewise monotonic transformations.
    \emph{Trans. Amer. Math. Soc.} 186, 481--488 (1974).

    \bibitem{Liverani95}
    C.~Liverani.
    Decay of correlations.
    \emph{Ann. of Math. (2)} 142, 239--301 (1995).

    \bibitem{Liverani10}
    C.~Liverani.
    On the work and vision of Dmitry Dolgopyat. 
    \emph{Journal of Modern Dynamics}, 4(2): 211--225 (2010). 

    \bibitem{Naud12} 
    F.~Naud.
    The Ruelle spectrum of generic transfer operators.
    {\em Discrete Contin. Dyn. Syst.}, 32, 2521--2531 (2012).

    \bibitem{Nussbaum70}
    R.D.~Nussbaum.
    {The radius of the essential spectrum}.
    \emph{Duke Math. J.} {\bf 37} (1970), 473--478.

    \bibitem{RRU08}
    F.~Rodriguez Hertz, M.~Rodriguez Hertz \& R.~Ures. Partial hyperbolicity and ergodicity in dimension three. 
    \emph{Journal of Modern Dynamics} 2, 187--208 (2008).

    \bibitem{Ruelle89}
    D.~Ruelle.
    The thermodynamic formalism for expanding maps.
    \emph{Comm. Math. Phys.} 125, 239--262 (1989).

    \bibitem{Shi14a}
    Y.~Shi.
    Perturbations of partially hyperbolic automorphisms on Heisenberg nilmanifold.
    \emph{Ph.D. thesis.}
    Peking University \& Université de Bourgogne, China/France (2014).

    \bibitem{Shi14b}
    Y.~Shi.
    Partially hyperbolic diffeomorphisms on Heisenberg nilmanifolds and holonomy maps.
    \emph{Comptes Rendus Mathematique}, 352, 743--747 (2014).

    \bibitem{SBJ13}
    J.~Slipantschuk, O.F.~Bandtlow \& W.~Just.
    \newblock Analytic expanding circle maps with explicit spectra.
    \emph{Nonlinearity} 26, 3231--3245, (2013).

    \bibitem{SBJ17}
    J.~Slipantschuk, O.F.~Bandtlow \& W.~Just.
    Complete spectral data for analytic Anosov maps of the torus.
    \emph{Nonlinearity} 30, 2667--2686 (2017).

    \bibitem{Tolimieri78}
    R.~Tolimieri.
    Heisenberg manifolds and theta functions.
    \emph{Trans. Amer. Math. Soc.} 239 293--319 (1978).

    \bibitem{TZ23}
    M.~Tsujii \& Z.~Zhang.
    Smooth mixing Anosov flows in dimension three are exponential mixing.
    \emph{Ann. of Math. (2)} 197, 65--158 (2023).

\end{thebibliography}
\end{document}